\def\cC{\mathcal{C}}
\def\cL{\mathcal{L}}
\newfont{\bb}{msbm10 at 11pt}
\newfont{\bbsmall}{msbm8 at 8pt}
\def\cS{\mathcal{S}}
\def\S{{\Sigma}}
\def\cF{\mathcal{F}}
\newcommand{\ov}{\overline}
\def\rth{\mathbb{R}^3}
\def\R{\mathbb{R}}
\def\B{\mathbb{B}}
\def\oB{\ov{\mathbb{B}}}
\def\N{\mathbb{N}}
\def\be{\beta}
\newcommand{\ben}{\begin{enumerate}}
\newcommand{\bit}{\begin{itemize}}
\newcommand{\een}{\end{enumerate}}
\newcommand{\eit}{\end{itemize}}
\newcommand{\wh}{\widehat}
\newcommand{\Int}{\mbox{Int}}
\newcommand{\cH}{{\mathcal H}}
\newcommand{\wt}{\widetilde}
\newcommand{\ed}{\end{document}}
\def\a{{\alpha}}
\def\t{{\theta}}
\def\g{{\gamma}}
\def\G{{\Gamma}}
\def\L{{\Lambda}}
\def\de{{\delta}}
\def\ve{{\varepsilon}}
\newcommand{\cD}{{\mathcal D}}
\newtheorem{theorem}{Theorem}[section]
\newtheorem{remark}[theorem]{Remark}
\newtheorem{corollary}[theorem]{Corollary}
\newtheorem{definition}[theorem]{Definition}
\newtheorem{claim}[theorem]{Claim}
\definecolor{b}{rgb}{.1,.1,.7}
\definecolor{rr}{rgb}{.8,0,.3}
\definecolor{g}{rgb}{0,.5,0}
\definecolor{pp}{rgb}{.5,0,.7}
\definecolor{r}{rgb}{.6,0,.3}
\definecolor{y}{rgb}{.9,.99,.9}
\begin{document}
\begin{title}{Limit lamination theorem for H-disks}
\end{title}



\date{}

\begin{author}
{William H. Meeks III\thanks{This material is based upon
   work for the NSF under Award No. DMS-1309236.
   Any opinions, findings, and conclusions or recommendations
   expressed in this publication are those of the authors and do not
   necessarily reflect the views of the NSF.}
   \and Giuseppe Tinaglia\thanks{The second author was partially
   supported by
EPSRC grant no. EP/M024512/1}}
\end{author}
\maketitle

\begin{abstract}
In this paper we prove a  theorem concerning  lamination limits of
sequences of compact
disks $M_n$ embedded in $\rth$ with constant mean curvature $H_n$,
when the boundaries of these disks tend to
infinity.  This theorem generalizes
to the non-zero constant mean curvature case Theorem~0.1 by Colding
and Minicozzi~\cite{cm23}. We apply
this  theorem to prove the existence of a  chord arc result for
compact
disks  embedded in $\rth$ with constant mean curvature; this
chord arc result generalizes
Theorem~0.5 by  Colding and Minicozzi in~\cite{cm35} for minimal disks.

\vspace{.3cm}

\noindent{\it Mathematics Subject Classification:} Primary 53A10,
   Secondary 49Q05, 53C42

\noindent{\it Key words and phrases:}
Minimal surface, constant mean
curvature, one-sided curvature estimate, curvature estimates, minimal lamination,
$H$-surface, $H$-lamination,
chord-arc.
\end{abstract}
\maketitle

\vspace{-.5cm}

\section{Introduction}
In this paper  we apply results in~\cite{mt8,mt7,mt9} to obtain
(after passing to a subsequence)
minimal  lamination limits for any
sequence of compact
disks $M_n$ embedded in $\rth$ with constant mean curvature $H_n$,
when the boundaries of these disks tend to infinity; see
Theorem~\ref{thm2.1} below.  This  theorem is inspired by and generalizes
to the non-zero constant mean curvature setting
Theorem~0.1 by Colding and Minicozzi~\cite{cm24} and
is  related to work in~\cite{bt1,me25,me30,wh15}.
As an application of
Theorem~\ref{thm2.1}, we obtain a chord arc result for
compact  disks  embedded in $\rth$ with constant mean curvature
that does not depend on the value of the mean curvature;
this chord arc result is stated below in Theorem~\ref{main2}.
Theorem~\ref{main2} is inspired by and generalizes the chord arc
result by  Colding and Minicozzi for embedded minimal disks appearing  in
Theorem~0.5 of~\cite{cm35}.

For clarity of  exposition, we will call an oriented surface
$M$ immersed in $\rth$ an {\it $H$-surface} if it
is {\it embedded}, {\em connected}  and it
has {\it non-negative constant mean curvature $H$}. We will  call an
$H$-surface an {\em $H$-disk} if the $H$-surface is homeomorphic
to a closed unit disk in the Euclidean plane.
Here  $\B(R)$ denotes the open ball in $\rth$ centered at the origin $\vec{0}$
of radius $R$ and for a
point $p$ on a surface $\Sigma \subset \rth$, $|A_{\Sigma}|(p)$ denotes the norm
of the second fundamental
form of $\Sigma$ at $p$.

\begin{theorem}[Limit lamination theorem for $H$-disks] \label{thm2.1}
Fix  $\ve >0$ and let $\{M_n\}_n$ be
a sequence of $H_n$-disks in $\rth$ containing the origin  and such that
$\partial M_n \subset [\rth - \B(n)]$ and
$|A_{M_n} |(\vec{0})\geq \ve$. Then, after replacing  by
some subsequence,  exactly one of the
following two statements hold.
\ben[A.]
\item The surfaces $M_n$ 
converge smoothly with multiplicity one or two on compact
subsets of $\rth$ to a helicoid $M_{\infty}$
containing the origin. Furthermore,
every component $\Delta$ of $M_n\cap \B(1)$  is an open disk
whose closure $\ov{\Delta}$ in $M_n$
is a compact disk with piecewise smooth boundary, and
where the intrinsic distance in $M_n$
between  any two points in  $\ov{\Delta}$ is less than 10.
\item  There are points $p_n\in M_n$ such that
\[
\lim_{n\to \infty}p_n=\vec{0} \text{\, and \,}
\lim_{n\to \infty}|A_{M_n}|(p_n)=\infty,
\] and the
following  hold: \ben
\item The surfaces $M_n$ converge to a foliation
of $\rth$ by planes and the convergence is $C^\alpha$, for any $\a\in(0,1)$,
away from the line containing
the origin and orthogonal to the planes in the foliation. \item
There exists  compact subdomains
$\cC_n$ of $M_n$, $[M_n\cap \oB(1)]\subset \cC_n \subset \B(2)$
and  $\partial \cC_n\subset \B(2)-\oB(1)$, each $\cC_n$
consisting of one or two pairwise disjoint disks, where
each disk component has intrinsic diameter less than 3 and intersects $\B(1/n)$.
Moreover, each connected
component of $M_n\cap \B(1)$ is an open disk whose closure in $M_n$
is a compact disk with piecewise smooth boundary. \een
\een
\end{theorem}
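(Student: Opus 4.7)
The plan is to adapt the Colding--Minicozzi argument for minimal disks (\cite{cm24}) to the CMC setting by substituting the Meeks--Tinaglia theory of embedded $H$-disks from \cite{mt7,mt8,mt9}. That series provides the technical toolkit needed: curvature estimates for embedded $H$-disks, a one-sided curvature estimate, intrinsic chord-arc bounds, and a limit lamination theorem for sequences of $H$-surfaces. The dichotomy (A) versus (B) will come from a single question, settled by a subsequence extraction: are the norms $|A_{M_n}|$ of the second fundamental forms uniformly bounded on every compact subset of $\rth$? Passing to a subsequence, either (i) $\sup_n\sup_{M_n\cap\oB(R)}|A_{M_n}|<\infty$ for every $R>0$, or (ii) there exist $R>0$ and $q_n\in M_n\cap\oB(R)$ with $|A_{M_n}|(q_n)\to\infty$.

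In alternative (i), uniform curvature bounds together with the embeddedness and the recession of $\partial M_n$ yield, after a further subsequence, smooth convergence on compact subsets of $\rth$ to a complete, properly embedded $H_\infty$-surface $M_\infty\ni\vec 0$ with $|A_{M_\infty}|(\vec 0)\geq\ve$. Next I would argue that $M_\infty$ is simply connected: any nontrivial loop in $M_\infty$ would be approximated by loops in $M_n$ for large $n$, contradicting the disk property of the $M_n$ once $\partial M_n$ has receded to infinity. The classification of complete, simply connected, properly embedded $H$-surfaces in $\rth$ (Meeks--Rosenberg in the minimal case, and the Meeks--Tinaglia series for $H_\infty>0$) then forces $M_\infty$ to be a round sphere or a helicoid; noncompactness of $M_\infty$ rules out the sphere, so Case A holds with $H_\infty=0$. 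The multiplicity one or two follows from embeddedness together with the local structure near the axis of the helicoid, and the statements on components of $M_n\cap\B(1)$ and their intrinsic diameters reduce to the explicit geometry of helicoids via smooth convergence.

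In alternative (ii), the hypothesis $|A_{M_n}|(\vec 0)\geq\ve$ combined with intrinsic chord-arc estimates from \cite{mt8} allows me to replace $q_n$ by points $p_n\to\vec 0$ with $|A_{M_n}|(p_n)\to\infty$. The one-sided curvature estimate and the limit lamination theorem from \cite{mt9} then yield subsequential convergence of $M_n$ to an $H_\infty$-lamination of $\rth$, with $C^\alpha$-convergence for every $\alpha\in(0,1)$ away from the singular set. The curvature blow-up at $\vec 0$ forces the lamination to be a foliation by parallel planes, with singular set the line through $\vec 0$ perpendicular to the leaves. The compact subdomains $\cC_n$ in conclusion (B.2) are isolated by applying the disk decomposition and intrinsic diameter estimates of \cite{mt8} to the components of $M_n\cap\oB(1)$; the fact that each such component meets $\B(1/n)$ reflects the concentration of the blow-up at $\vec 0$. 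The hardest step I expect is Case A, where one must simultaneously control the topology of $M_\infty$ and identify it as a helicoid through the full strength of the classification; a close second is making rigorous, in Case B, the passage from pointwise curvature blow-up near $\vec 0$ to a genuine foliation of all of $\rth$ with the stated $C^\alpha$ regularity off the singular line and the extraction of the $\cC_n$ with their intrinsic bounds.
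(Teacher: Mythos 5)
Your dichotomy (locally bounded curvature versus blow-up in a compact set) is exactly the paper's, but both halves of your sketch have genuine gaps. In Case A, you cannot pass directly from locally bounded $|A_{M_n}|$ to smooth convergence to a single complete, properly embedded limit surface: with no area bounds the limit is a priori a minimal lamination (note that $H_n\to 0$ follows at once from the radius estimate, Theorem~\ref{rest}, since $\partial M_n$ leaves every ball --- you never use this, and your detour through the classification of simply connected $H>0$ surfaces is unnecessary), and one must rule out extra leaves and non-proper leaves; the paper does this via the structure of limit leaves (planes), multiplicity one or two (else stability), a curve-lifting argument for genus zero, properness of finite-genus leaves, and the strong halfspace theorem. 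More seriously, your simple-connectivity argument fails: a homotopically nontrivial loop in $M_\infty$ does lift to loops in $M_n$, but those loops bound disks in $M_n$ that need not stay in any compact set, so there is no contradiction with $M_n$ being a disk --- this argument would not even exclude a catenoid limit. The paper's route is different and essential: genus zero by lifting, then zero flux of $M_\infty$ (flux is homological and $H$-disks have zero flux), then the one-end theorem for zero-flux properly embedded minimal surfaces, and only then uniqueness of the helicoid.

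In Case B, the genuinely new quantitative content is conclusion (B.2), and your plan does not produce it. The weak chord-arc property of \cite{mt8} gives $\Sigma(x,\delta_1 R)\subset B_\Sigma(x,R/2)$ for a small universal $\delta_1$; applied to components of $M_n\cap\B(1)$ this yields an intrinsic diameter bound of order $1/\delta_1$, far from the stated bound $3$, and it says nothing about why there are only \emph{one or two} disk components $\cC_n$, each meeting $\B(1/n)$, with $\partial\cC_n\subset\B(2)-\oB(1)$. The paper proves a separate claim that the locus of vertical tangent planes of $M_n$ in $\B(3)$ consists of one or two analytic curves converging $C^1$ to the singular axis (via a rescaling trichotomy using the local helicoid picture of Theorem~\ref{mainextension}), and then constructs $\cC_n$ explicitly as ruled disks swept out by the arcs cut on $M_n$ by the vertical tangent planes along these axis curves, estimating intrinsic diameter with an auxiliary distance function; the statement that each component of $M_n\cap\B(1)$ is a disk with piecewise smooth boundary also requires a separate (semi-analytic triangulation plus mean curvature comparison) argument. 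Finally, the passage from curvature blow-up somewhere to blow-up points $p_n\to\vec 0$ is not a chord-arc matter: it comes from showing the singular set of convergence must contain $\vec 0$ because $|A_{M_n}|(\vec 0)\ge\ve$ while curvature tends to zero locally away from the singular set.
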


As an application of Theorem~\ref{thm2.1} and its proof, we prove
in Theorem~\ref{main3} a sharp chord arc result for $H$-disks (also see
Remark~\ref{sharp});
the following
chord arc result for $H$-disks is a restatement of equation~\eqref{eqChord2}
in Theorem~\ref{main3}.

\begin{theorem}[Chord arc property  for $H$-disks] \label{main2}
There exists a $C>1$ so that the following holds.
Suppose that $\S$ is an $H$-disk, $ \vec{0}\in \S$ and $R>r_0>0$.
If the open intrinsic ball ${B}_\S(\vec 0,CR)$
of radius $CR$ centered at $\vec{0}$ is
 contained in $\S-\partial \S$ and
$\sup_{B_\S(\vec 0,(1-\frac{\sqrt 2}2)r_0)}|A_\S|>r_0^{-1}$,
then
$$ \frac{1}{3}\mbox{\rm dist}_\S (x,\vec{0})\leq |x|/2+r_0, \;
\mbox{\rm for } x\in B_\S(\vec 0,R).$$
\end{theorem}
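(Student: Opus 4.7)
I would argue by contradiction, reducing to the setting of Theorem~\ref{thm2.1} via rescaling. Suppose no such $C$ works. For each $n$ one produces an $H_n$-disk $\Sigma_n$ with $\vec 0\in\Sigma_n$, radii $R_n>r_{0,n}>0$, and a point $x_n\in B_{\Sigma_n}(\vec 0,R_n)$ satisfying the hypotheses with $C=n$ while $\tfrac13\mathrm{dist}_{\Sigma_n}(x_n,\vec 0)>|x_n|/2+r_{0,n}$. Dilating by $1/r_{0,n}$ I may assume $r_{0,n}=1$, so $\mathrm{dist}_{\Sigma_n}(\vec 0,\partial\Sigma_n)\ge n$, there is a point of curvature $\ge 1$ in the intrinsic ball $B_{\Sigma_n}(\vec 0,1-\tfrac{\sqrt 2}{2})$, and the chord arc defect $\mathrm{dist}_{\Sigma_n}(x_n,\vec 0)>\tfrac32|x_n|+3$ persists.

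\textbf{Applying Theorem~\ref{thm2.1}.} Pick $p_n$ in the above intrinsic ball with $|A_{\Sigma_n}|(p_n)\ge 1$, translate so $p_n=\vec 0$, and use the large intrinsic ball $B_{\Sigma_n}(p_n,n-1)\subset\Sigma_n-\partial\Sigma_n$ to trim $\Sigma_n$ by an ambient sphere, obtaining an $H_n$-subdisk $M_n\ni p_n$ with $\partial M_n\subset\rth-\B(n)$. Theorem~\ref{thm2.1} (applied with $\ve=1$) then yields, up to a subsequence, alternative A (smooth convergence to a helicoid through the origin, every component of $M_n\cap\B(1)$ a disk of intrinsic diameter less than $10$) or alternative B (convergence to a foliation of $\rth$ by parallel planes, with compact subdomains $\cC_n\supset M_n\cap\oB(1)$ of intrinsic diameter less than $3$). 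Running the same argument after any further dilation by $\rho\ge 1$ gives, for every fixed $\rho$, an intrinsic diameter bound less than $C_0\rho$ on each component of $M_n\cap\B(\rho)$ for $n$ large.

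\textbf{Contradiction and main obstacle.} In both alternatives the limit geometry -- a helicoid in A, a collection of parallel planes in B -- satisfies $\mathrm{dist}(x,\vec 0)\le \tfrac32|x|+3$ strictly: in B this is trivial since $\mathrm{dist}=|{\cdot}|$ on each plane, and in A an explicit computation on the helicoid $(r,\theta)\mapsto(r\cos\theta,r\sin\theta,c\theta)$ (with $c$ determined by the unit curvature bound) shows that the axis-through path length $\sqrt 2\,|\theta|+r$ is strictly dominated by $\tfrac32\sqrt{r^2+2\theta^2}+3$. Combining smooth convergence on compact sets with the scale-invariant component diameter bounds forces the chord arc defect at $x_n$ to pass either to a point $x_\infty$ on the limit lamination, contradicting strict chord arc at $x_\infty$, or, in the case $|x_n|\to\infty$, to a scale at which the disk topology of $\Sigma_n$ forces distinct sheets of the near-helicoid to merge, yielding a universal linear bound on $\mathrm{dist}_{\Sigma_n}(x_n,\vec 0)$ incompatible with the defect for $n$ large. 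The main obstacle is precisely this merging step: two extrinsically nearby points on a near-helicoidal $H$-disk can sit on intrinsically remote sheets at the unit curvature scale, and it is the combined effect of the additive $r_0$ term (absorbing the unit-scale ambiguity), the component-wise diameter bounds at every scale, and the disk topology (forcing reconnection at the correct scale) that converts the qualitative lamination convergence of Theorem~\ref{thm2.1} into the sharp chord arc inequality with coefficients $\tfrac13$ and $\tfrac12$.
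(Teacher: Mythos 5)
There is a genuine gap, and it sits exactly where you say the ``main obstacle'' is. Your reduction normalizes at the curvature scale ($r_{0,n}=1$), invokes Theorem~\ref{thm2.1}, and then asserts that smooth convergence on compact sets plus ``scale-invariant component diameter bounds'' plus ``disk topology forcing sheets to merge'' yields the inequality with constants $\tfrac13$ and $\tfrac12$; no argument is given for that step, and the crude bounds available from Theorem~\ref{thm2.1} cannot produce it. Indeed, the diameter bounds you can extract at scale $\rho$ (less than $10\rho$ in alternative A, less than $3\rho$ for the disks $\cC_n$ in alternative B) are far weaker than the needed bound $\tfrac32|x_n|+3$, and they only apply when $\vec 0$ and $x_n$ lie in the \emph{same} component of $M_n\cap\B(\rho)$ (or the same disk of $\cC_n$, of which there may be two) --- precisely the multi-sheet ambiguity you acknowledge. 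Moreover, in Case B the convergence is only a lamination limit away from the singular line, so intrinsic distances on $\Sigma_n$ do not converge to distances on the limit planes even for bounded $|x_n|$; ``passing the defect to a point $x_\infty$ on the limit lamination'' is not legitimate. The paper's proof does real quantitative work here that your sketch omits: it rescales by $1/(nR_n)$ (not by $1/r_{0,n}$), so that the defect point lands in $B_{\Sigma_n}(\vec 0,1/n)$ and the curvature scale $r_n\to 0$, forcing Case B; it then uses the explicit ruled-disk structure $\cC_n^\be$ from the proof of part B, the comparison maps $G_n^\be=I_n\circ\Pi_n$ with the length estimates \eqref{c1}--\eqref{c4}, the auxiliary distance $D$ of Definition~\ref{d}, and, crucially, Claim~\ref{distance} ($|Q_n|\le(1+\mu)r_n$), whose proof is a blow-up to a helicoid exploiting the specific constant $(1-\tfrac{\sqrt2}{2})r_0$ in the curvature hypothesis, followed by a three-case analysis on $r_n/|z_n|$. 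None of this is replaced by anything in your proposal, and without it the sharp coefficients (and even any fixed linear bound at points with $|x_n|\to\infty$) do not follow.

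A secondary, fixable omission: your ``trim $\Sigma_n$ by an ambient sphere'' step needs justification. The hypothesis is an \emph{intrinsic} ball condition, while Theorem~\ref{thm2.1} requires an $H_n$-\emph{disk} $M_n$ with $\partial M_n\subset\rth-\B(n)$; a component of $\Sigma_n\cap\B(n)$ containing the base point need not a priori be a disk with boundary on the sphere, since $\partial\Sigma_n$ can be extrinsically close while intrinsically far. The paper handles this with the weak chord arc property (Theorem~\ref{thm1.1}, from~\cite{mt8}), which states that $\Sigma(\vec 0,\delta_1 R)$ is a disk with boundary in $\partial\B(\delta_1 R)$ contained in $B_\Sigma(\vec 0,R/2)$; you should cite and use it (or prove an equivalent) rather than trimming by an unqualified sphere.
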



The proofs of the results described in this
paper depend in an essential manner on the
existence of
extrinsic curvature estimates for disks embedded
in $\rth$ of non-zero constant mean curvature
that appear in~\cite{mt7}, as well as on a key  extrinsic one-sided curvature
estimate obtained in~\cite{mt9} and a weak cord arc result derived in~\cite{mt8};
these results from~\cite{mt8,mt7,mt9}  are described in  Section~\ref{sec:pre}.



%

\section{Preliminaries.} \label{sec:pre}

Throughout this paper, we use the following notation.
Given $a,b,R>0$, $p\in \rth$ and $\S$ a surface in $\rth$:

\bit
\item $\B(p,R)$ is the open ball of radius $R$ centered at $p$.
\item $\B(R)=\B(\vec{0},R)$, where $\vec{0}=(0,0,0)$.
\item For $p\in \S$, $B_{\S}(p,R)$ denotes the open intrinsic ball in $\S$ of radius $R$.
\item $C(a,b)=\{(x_1,x_2,x_3) \mid x_1^2+x_2^2\leq a^2,\, |x_3|\leq b\}$.
\item $A(r_1,r_2)=\{(x_1,x_2,0)\mid r_2^2\leq x_1^2+x_2^2\leq r_1^2\}$.
\eit


We first  introduce the notion of multi-valued
graph, see~\cite{cm22} for further discussion.
Intuitively, an $N$-valued graph is a simply-connected embedded surface covering an
annulus such that over a neighborhood of each point of the annulus, the
surface consists of $N$ graphs. The stereotypical  infinite multi-valued
graph is half of the helicoid, i.e., half of an infinite  double-spiral staircase.

\begin{figure}[h]
\begin{center}
\includegraphics[width=11.5cm]{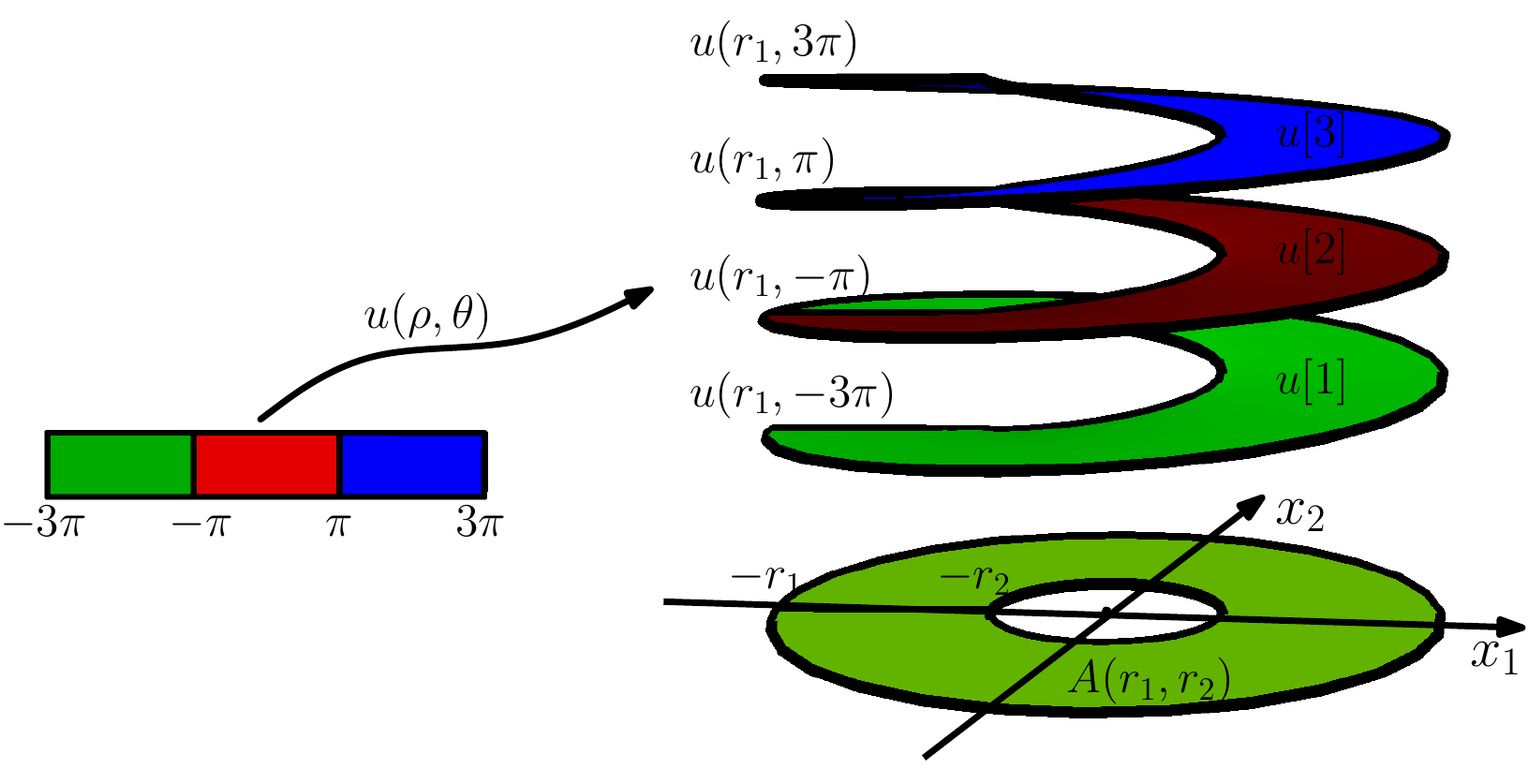}
\caption{A right-handed 3-valued graph.}
\label{3-valuedgraph}
\end{center}
\end{figure}

\begin{definition}[Multi-valued graph]\label{multigraph} {\rm
Let $\mathcal{P}$ denote the universal cover of the
punctured $(x_1,x_2)$-plane,
$\{(x_1,x_2,0)\mid (x_1,x_2)\neq (0,0)\}$, with global coordinates
$(\rho , \theta)$.
\ben[1.] \item
An {\em $N$-valued graph over the annulus $ A(r_1,r_2)$} is a single valued graph
$u(\rho, \theta)$ over $\{(\rho ,\theta )\mid r_2\leq \rho
\leq r_1,\;|\theta |\leq N\pi \}\subset \mathcal{P}$, if $N$ is odd,
or over $\{(\rho ,\theta )\mid r_2\leq \rho
\leq r_1,\;(-N+1)\pi\leq \theta \leq \pi (N+1)\}\subset \mathcal{P}$, if $N$ is even.
\item An  $N$-valued graph $u(\rho,\t)$ over the annulus $ A(r_1,r_2)$ is
called {\em righthanded} \, [{\em lefthanded}] if whenever it makes sense,
$u(\rho,\t)<u(\rho,\t +2\pi)$ \, [$u(\rho,\t)>u(\rho,\t +2\pi)$]
\item The set $\{(r_2,\theta, u(r_2,\theta)), \theta\in[-N\pi,N\pi]\}$ when $N$ is odd
(or $\{(r_2,\theta, u(r_2,\theta)), \theta\in[(-N+1)\pi,(N+1)\pi]\}$ when $N$ is even)
is the {\em inner boundary} of the $N$-valued graph.
\een }
\end{definition}

From Theorem 2.23 in~\cite{mt7} one obtains the following,
detailed geometric description
of an $H$-disk with large
norm of the second fundamental form at the origin. The precise meaning
of certain statements below are
made clear in~\cite{mt7} and we refer the reader to that paper for further details.

\begin{theorem}\label{mainextension}
Given $\ve,\tau>0$   and $\ov{\ve}\in (0,\ve/4)$ there
exist  constants $\Omega_\tau:=\Omega(\tau )$,
$\omega_\tau:=\omega(\tau )$ and  $G_\tau:=G(\ve,\tau,\ov\ve ) $
such that if $M$ is an $H$-disk, $H\in (0,\frac 1{2\ve})$,
$\partial M\subset \partial \B(\ve)$, $\vec 0\in M$ and
$|A_M|(\vec 0)>\frac{1}{\eta}G_\tau$,  for $\eta\in(0,1]$, then for any $p\in \ov{\B}(\vec{0},\eta\ov{\ve})$ that is a maximum of the
function
$|A_{M}|(\cdot)(\eta\bar\ve-|\cdot|)$, after translating
$M$ by $-p$, the following geometric description of $M$ holds:
\par

On the scale of the norm of the second
fundamental form  $M$ looks like one or two helicoids nearby  the
origin and, after a rotation that turns these helicoids into
vertical helicoids, $M$  contains a 3-valued graph
$u$ over  $A(\ve\slash\Omega_\tau,\frac{\omega_\tau}{|A_M|(\vec 0)})$
with the norm of its gradient less than $\tau$ and with its inner boundary
in $\B(10\frac{\omega_\tau}{|A_M|(\vec 0)})$.
 \end{theorem}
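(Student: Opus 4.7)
The plan is to prove this by a point-picking rescaling argument followed by a contradiction/compactness argument that reduces the statement to the Colding--Minicozzi description of smooth limits of embedded minimal disks. The Colding--Minicozzi function $|A_M|(\cdot)(\eta\ov\ve-|\cdot|)$ is used precisely so that a maximum $p$ will satisfy (after translation by $-p$ and rescaling by $\l:=|A_M|(p)$) the bound $|A_{\l M}|\le 2$ on the Euclidean ball of radius $\frac{\l}{2}(\eta\ov\ve-|p|)$; the maximum property against the competitor $q=\vec 0$ forces $\l(\eta\ov\ve-|p|)\ge |A_M|(\vec 0)\eta\ov\ve\ge G_\tau\ov\ve$, so this radius diverges with $G_\tau$. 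Simultaneously, the rescaled mean curvature $H/\l<\frac{1}{2\ve G_\tau}$ tends to zero.

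The argument proceeds by contradiction: suppose the conclusion fails, so there is a sequence $M_n$ of $H_n$-disks, with associated points $p_n$ and scales $\l_n\to\infty$, such that the rescaled disks $\wt M_n:=\l_n(M_n-p_n)$ do not contain the desired $3$-valued graph. By the uniform second fundamental form bound on Euclidean balls of radius tending to infinity and the vanishing of the rescaled mean curvature, a standard compactness argument (together with the topological disk hypothesis and embeddedness) extracts, along a subsequence, smooth convergence on compact sets, possibly with multiplicity, to a complete embedded minimal surface $\Sigma_\infty\subset\rth$ through the origin with $|A_{\Sigma_\infty}|(\vec 0)=1$. The Colding--Minicozzi uniqueness of the helicoid as the blow-up of embedded minimal disks identifies $\Sigma_\infty$ as a helicoid, and the convergence multiplicity, which is $1$ or $2$ by embeddedness, accounts for the ``one or two helicoids'' alternative. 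After rotation making the helicoid axis vertical, the classical Colding--Minicozzi decomposition produces a $3$-valued graph of arbitrarily small gradient over any prescribed annulus $A(R,\omega_\tau)$, with inner boundary inside $\B(10\omega_\tau)$, as read off from the explicit geometry of a helicoid with $|A|(\vec 0)=1$. Smooth convergence then transplants this $3$-valued graph to $\wt M_n$ for large $n$, and undoing the rescaling yields the desired $3$-valued graph on $M_n$ over $A(\ve/\Omega_\tau,\omega_\tau/\l_n)$, contradicting the failure hypothesis.

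The main obstacle is the identification of $\Sigma_\infty$ as one or two helicoids. In the strictly minimal case this is the culmination of the Colding--Minicozzi series; in the present $H$-disk setting one must verify that a small but non-zero mean curvature does not disturb the picture. Concretely, one needs (i) that the topological disk type is preserved under the rescaled limit, which requires the extrinsic curvature estimates for $H$-disks from~\cite{mt7}; (ii) that the one-sided curvature estimate from~\cite{mt9} applies at the rescaled level to rule out non-helicoidal laminar limits; and (iii) that the multiplicity of the smooth convergence is genuinely controlled to be at most two, via embeddedness combined with the weak chord-arc bound from~\cite{mt8}. Once these three inputs are verified at the rescaled scale, the helicoid identification and the extraction of the $3$-valued graph are routine, and the constants $\Omega_\tau,\omega_\tau,G_\tau$ can be exhibited by examining the quantitative dependence in each of the three ingredients.
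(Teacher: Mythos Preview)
This theorem is not proved in the present paper; it is quoted from Theorem~2.23 of~\cite{mt7} as a preliminary result, so there is no proof here to compare your proposal against. That said, your outline has a substantive gap worth naming.

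Your point-picking and rescaling are correct and standard: at a maximum $p$ of $|A_M|(\cdot)(\eta\ov\ve-|\cdot|)$, after translating by $-p$ and rescaling by $\l=|A_M|(p)$ one gets $|A|\le 2$ on a ball whose radius tends to infinity with $G_\tau$, and the rescaled mean curvature tends to zero. Compactness then produces a smooth minimal limit with $|A|(\vec 0)=1$, and arguments of the type in Case~A of the proof of Theorem~\ref{thm2.1} (genus zero via curve lifting, properness, the halfspace theorem, uniqueness of the helicoid) identify that limit as a helicoid with multiplicity one or two.

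The gap is in the final step. The helicoid limit controls $\wt M_n=\l_n(M_n-p_n)$ only on balls of \emph{fixed} radius $R$ in the rescaled picture. Undoing the rescaling, you get a $3$-valued graph over an annulus with outer radius $R/\l_n$, which tends to zero. But the theorem asserts a $3$-valued graph with outer radius $\ve/\Omega_\tau$, a \emph{fixed} fraction of the scale of the original disk; in the rescaled picture this means outer radius $\l_n\ve/\Omega_\tau\to\infty$, which no compactness argument on fixed compact sets can reach. Bridging the blow-up scale $\omega_\tau/|A_M|(\vec 0)$ to the macroscopic scale $\ve/\Omega_\tau$ is exactly the Colding--Minicozzi extension/sheeting machinery (a multi-valued graph, once formed, propagates outward), and adapting that machinery to $H$-disks with $H>0$ is the actual content of~\cite{mt7}. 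Your proposal treats this as routine when it is the heart of the statement.

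A secondary issue is circularity: the extrinsic curvature estimate you invoke from~\cite{mt7} (Theorem~\ref{ext:cest} here) is, as the paper states, a \emph{consequence} of Theorem~\ref{mainextension}, so it cannot be used as an input to prove it. Also, the multiplicity bound does not require the weak chord-arc result: multiplicity at least three would force the limit to be stable, hence flat, contradicting $|A|(\vec 0)=1$.
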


  Theorem~\ref{mainextension}
 was inspired by the pioneering work of Colding and Minicozzi
 in the minimal case~\cite{cm21,cm22,cm24,cm23};
however in the constant positive mean curvature setting this
description has led to a   different conclusion, that is the
existence of  radius and curvature estimates stated
below.

\begin{theorem}[Extrinsic radius estimates, Theorem~3.4 in~\cite{mt7}] \label{rest}
There exists an
${\mathcal R_0}\geq \pi$ such that for any $H$-disk $\cD$,
$$ {\large{\Large \sup}_{\large p\in \cD }\{d_{\rth}(p,\partial \cD)\} \leq\frac{{\mathcal R_0}}{H}.}$$
\end{theorem}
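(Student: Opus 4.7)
The plan is to argue by contradiction, reduce to $H=1$ by scaling, and then apply Theorem~\ref{thm2.1} to a suitable blow-up of the offending sequence. The scaling step is straightforward: the map $\cD\mapsto H\cdot \cD$ sends an $H$-disk to a $1$-disk and multiplies $\R^3$-distances by $H$, so it suffices to produce a universal $\mathcal{R}_0\geq \pi$ with $\sup_{p\in\cD}d_{\R^3}(p,\partial\cD)\leq \mathcal{R}_0$ for every $1$-disk $\cD$. If no such $\mathcal{R}_0$ exists, choose $1$-disks $\cD_n$ and points $p_n\in\cD_n$ with $d_{\R^3}(p_n,\partial\cD_n)\to\infty$; after translating $p_n$ to the origin we may assume $\vec 0\in\cD_n$ and $\partial\cD_n\subset\R^3\setminus\B(n)$.

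Next I would set $\Lambda_n:=\sup_{\cD_n\cap\B(n/2)}|A_{\cD_n}|$ and split on whether $\Lambda_n\to\infty$ along a subsequence. In the unbounded case, pick $q_n\in\cD_n\cap\B(n/2)$ realizing $\Lambda_n$ and translate $q_n$ to the origin, producing $1$-disks $\wt{\cD}_n:=\cD_n-q_n$ with $\vec 0\in\wt{\cD}_n$, $|A_{\wt{\cD}_n}|(\vec 0)\to\infty$, and $\partial\wt{\cD}_n\subset\R^3\setminus\B(n/2)$. After reindexing, Theorem~\ref{thm2.1} applies to $\{\wt{\cD}_n\}$ with, say, $\ve=1$. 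Each of its two alternatives contradicts the fact that the $\wt{\cD}_n$ all have constant mean curvature $1$: in (A), smooth multiplicity-one-or-two convergence to a helicoid would force the limit to have mean curvature $1$, which is impossible since helicoids are minimal; in (B), the sheets of $\wt{\cD}_n$ away from the singular line are smooth CMC graphs converging to planes of a foliation of $\R^3$, so their mean curvatures $1$ would have to converge to $0$. Either way, contradiction.

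If instead $\Lambda_n$ stays bounded, standard CMC compactness yields a subsequence converging smoothly on compact subsets of $\R^3$ to a complete properly embedded $1$-surface $\cD_\infty$ through the origin. Here I would invoke the classification of complete properly embedded CMC surfaces in $\R^3$ with $H>0$ (Alexandrov reflection, following Meeks): the component $\Sigma$ of $\cD_\infty$ through $\vec 0$ should be a round unit sphere, with Delaunay-type alternatives excluded because the $\cD_n$ are embedded disks with boundaries escaping to infinity. Granting this, for large $n$ the surface $\cD_n\cap\B(2)$ contains a closed $2$-submanifold $\Sigma_n$ (a small $C^2$-graph over $\Sigma$) lying in the interior of the disk $\cD_n$; but the connected $2$-manifold $\cD_n$ with nonempty boundary cannot contain such a closed subsurface, since it would be both open and closed in $\cD_n$. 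The main obstacle is precisely this bounded-curvature case, whose resolution hinges on the CMC classification step ruling out non-spherical complete properly embedded $1$-surfaces as limits of simply connected pieces with boundaries escaping to infinity.
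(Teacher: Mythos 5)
There is a genuine gap, and it is structural: your argument is circular relative to the logical order of these results. Theorem~\ref{rest} is not proved in this paper at all; it is quoted as Theorem~3.4 of~\cite{mt7} and is an \emph{input} to everything here. In particular, the very first step of the proof of Theorem~\ref{thm2.1} is to invoke Theorem~\ref{rest} to conclude $\lim_{n\to\infty}H_n=0$; without that, a sequence of $1$-disks with boundaries escaping to infinity (exactly the sequence $\wt{\cD}_n$ you construct) could not be shown to limit onto a \emph{minimal} lamination, which is the whole content of the two alternatives you use to get your contradiction. So you cannot use Theorem~\ref{thm2.1} as a black box to prove the radius estimate: the statement you are trying to prove is an ingredient of its proof. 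The actual proof in~\cite{mt7} is independent of any limit-lamination theorem and proceeds from the multi-valued graph description of an $H$-disk with large curvature (Theorem~\ref{mainextension}) together with flux-type and one-sided curvature arguments; it is a substantially harder argument than a compactness dichotomy.

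Even setting the circularity aside, your bounded-curvature case does not close. There is no ``classification of complete properly embedded CMC surfaces in $\rth$ with $H>0$'' reducing the limit to a round sphere: besides spheres there are Delaunay unduloids and many embedded examples of finite genus with several ends, and a priori the limit leaf through the origin need not even be proper (limits of embedded surfaces with locally bounded second fundamental form are laminations whose leaves can fail to be proper). Saying Delaunay-type limits are ``excluded because the $\cD_n$ are disks with boundaries escaping to infinity'' is not an argument; to rule them out one would need, at minimum, properness and finite topology (or genus zero) of the limit via curve lifting, non-existence of complete stable $H>0$ surfaces to handle higher-multiplicity convergence, and a flux computation comparing the zero flux of the disks $\cD_n$ with the nonzero weight of Delaunay-type ends (Korevaar--Kusner--Solomon), before applying Alexandrov to the resulting compact surface. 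None of this is supplied, and you yourself flag it as the main obstacle, so the bounded case remains open in your proposal; combined with the circular use of Theorem~\ref{thm2.1} in the unbounded case, the proof does not stand.
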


\vspace{.1cm}

\begin{theorem}[Extrinsic curvature estimates, Theorem~3.5 in \cite{mt7}]
\label{ext:cest}
Given $\delta,\cH>0$, there exists a constant $K_0(\delta,\cH)$ such that
for any $H$-disk $\cD$ with $H\geq \cH$,
 $${{\sup}_{ \{p\in \cD \, \mid \, d_{\rth}(p,\partial
\cD)\geq \delta\}} |A_\cD|\leq  K_0(\delta,\cH)}.$$
\end{theorem}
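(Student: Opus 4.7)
I would argue by contradiction and use Theorem~\ref{mainextension} to force a multi-sheeted structure whose size Theorem~\ref{rest} will rule out.

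Suppose the claimed estimate fails. Then there are $\delta,\cH>0$ and a sequence of $H_n$-disks $\cD_n$ with $H_n\geq\cH$ and interior points $p_n\in\cD_n$ satisfying $d_{\rth}(p_n,\partial\cD_n)\geq\delta$ and $\lambda_n:=|A_{\cD_n}|(p_n)\to\infty$. Translating each $p_n$ to the origin, Theorem~\ref{rest} applied at $p_n$ yields $\delta\leq \mathcal R_0/H_n$, so $H_n\in[\cH,\mathcal R_0/\delta]$; after passing to a subsequence, $H_n\to H_\infty\in[\cH,\mathcal R_0/\delta]$. Fix $\ve\in(0,\delta)$ with $\ve<\delta/(2\mathcal R_0)$, so that both $\ov\B(\ve)\cap\partial\cD_n=\emptyset$ and the hypothesis $H_n<1/(2\ve)$ of Theorem~\ref{mainextension} hold. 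For a generic such $\ve$ (via Sard's theorem applied to the restriction of $|\cdot|$ to $\cD_n$), let $N_n$ denote the component of $\cD_n\cap\ov\B(\ve)$ containing $\vec 0$; this is an $H_n$-disk with $\partial N_n\subset\partial\B(\ve)$.

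Pick $\tau>0$ small. Since $|A_{N_n}|(\vec 0)=\lambda_n\to\infty$, for $n$ large Theorem~\ref{mainextension} applies with $\eta=1$ and some $\ov\ve\in(0,\ve/4)$, producing a point-picking maximizer $q_n\in\ov\B(\ov\ve)$ such that, after translating by $-q_n$ and rotating, $N_n$ contains a $3$-valued graph $u_n$ over the annulus $A(\ve/\Omega_\tau,\,\omega_\tau/|A_{N_n}|(q_n))$ with $|\nabla u_n|<\tau$ and inner boundary in $\B(10\omega_\tau/|A_{N_n}|(q_n))$. The outer radius $\ve/\Omega_\tau$ is fixed as $n$ grows while the inner radius tends to $0$, and because the gradient of $u_n$ is small, the large second fundamental form must concentrate near the inner boundary. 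Iterating Theorem~\ref{mainextension} at points of large curvature on that inner boundary enlarges $u_n$, for any prescribed $k\in\N$ and provided $n$ is sufficiently large, to a $(3k)$-valued graph over the same outer annulus with the same gradient bound.

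The contradiction is then geometric. Each sheet of $u_n$ lies inside $N_n\subset\ov\B(\ve)$, so the total vertical extent of the $3k$ sheets is at most $2\ve$. On the other hand, because $H_n\to H_\infty>0$ and the sheets are pairwise disjoint embedded CMC graphs with the same prescribed normal direction, an Alexandrov sphere-barrier argument (sliding a sphere of radius $1/H_\infty$ tangentially between two consecutive sheets, using the maximum principle for CMC surfaces) forces the vertical gap between consecutive sheets to be bounded below by a positive constant depending only on $H_\infty$. Hence $3k\cdot(\text{separation})\leq 2\ve$, which bounds $k$ and contradicts the unbounded iteration. I expect this sheet-separation step to be the main obstacle, since it is precisely the place where the hypothesis $H\geq\cH>0$ becomes essential: the analogous statement is false for minimal disks, as the Colding--Minicozzi collapsed-helicoid examples show that curvature can indeed blow up inside in the $H=0$ case.
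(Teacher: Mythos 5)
First, a point of logic: the paper does not prove this statement at all --- it is imported verbatim as Theorem~3.5 of~\cite{mt7} and used as a black box --- so there is no internal proof to compare yours with; the benchmark is the original (long and delicate) argument in~\cite{mt7}, and your proposal does not reconstruct it.

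The decisive gap is your sheet-separation step. The claim that an Alexandrov sliding-sphere/maximum-principle argument forces consecutive sheets of the multigraph to be separated by a constant depending only on $H_\infty$ is not true. Two disjoint embedded graphs with the same constant mean curvature $H$ and the same orientation of the mean curvature vector can be arbitrarily close: take a small spherical cap of radius $1/H$ and translate it vertically by an arbitrarily small amount; no barrier of radius $1/H_\infty$ needs to fit between them, and tangency never occurs, so the maximum principle yields nothing. Worse, for the multigraph actually produced by Theorem~\ref{mainextension} the separation genuinely degenerates: the inner boundary lies in $\B(10\,\omega_\tau/|A_{M}|(\vec 0))$, i.e.\ the sheets wind around a helicoid-like core at scale comparable to $1/|A_{M_n}|(p_n)\to 0$, so near the inner boundary consecutive sheets are at distance tending to zero. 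Hence the inequality ``$3k\cdot(\text{separation})\leq 2\ve$'' on which your contradiction rests never materializes. Two further points: Theorem~\ref{mainextension} as quoted only produces a $3$-valued graph, and your iteration to a $(3k)$-valued graph over the same fixed annulus with uniform gradient bounds is itself a substantial extension result that is not contained in any of the quoted statements; and the component of $\cD_n\cap\ov{\B}(\ve)$ through the origin need not be a disk, so to apply Theorem~\ref{mainextension} you would first need something like the weak chord-arc property (Theorem~\ref{thm1.1}) to extract a subdisk with boundary on a sphere --- which also raises a circularity concern, since that result postdates the theorem you are trying to prove. The way positivity of $H$ is actually exploited in this circle of ideas is quite different: collapsed multi-valued graphs force a limit foliation by planes (as in Case~B of Theorem~\ref{thm2.1}), and then a one-sided curvature estimate of the type of Theorem~\ref{th} forces $H_n\to 0$, contradicting $H_n\geq \cH$; in~\cite{mt7} this is carried out through a lengthy analysis of CMC double spiral staircases, not through a pointwise gap bound between sheets. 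Your normalization via the radius estimate (Theorem~\ref{rest}), giving $H_n\in[\cH,\mathcal R_0/\delta]$, is correct and is a sensible first step, but the heart of the proof is missing.
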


\vspace{.1cm}

Indeed since the plane and the helicoid are complete
simply-connected minimal surfaces properly
embedded in $\rth$, a radius estimate does not hold in
the minimal case. Moreover rescalings of a helicoid
give a sequence of embedded minimal disks with arbitrarily
large norm of the second fundamental
form at points arbitrarily far
from its boundary; therefore in the minimal setting, the
curvature estimates also do not hold.

The next two results from~\cite{mt9} will be essential tools in
the proof of Theorem~\ref{thm2.1}.

\begin{theorem}[Extrinsic one-sided curvature estimates for $H$-disks] \label{th}
There exist $\ve\in(0,\frac{1}{2})$ and
$C \geq 2 \sqrt{2}$ such that for any $R>0$, the following holds.
Let $\cD$ be an $H$-disk such that $$\cD\cap \B(R)\cap\{x_3=0\}
=\O \quad \mbox{and} \quad \partial \cD\cap \B(R)\cap\{x_3>0\}=\O.$$
Then:
\begin{equation} \label{eq1}
\sup _{x\in \cD\cap \B(\ve R)\cap\{x_3>0\}} |A_{\cD}|(x)\leq \frac{C}{R}.
\end{equation} In particular, if $\cD\cap \B(\ve R)\cap\{x_3>0\}\neq\O$,
then $H\leq \frac{C}{R}$.
\end{theorem}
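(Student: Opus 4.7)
I would argue by contradiction via a blow-up that feeds into the structure theorem for $H$-disks (Theorem~\ref{mainextension}) and the extrinsic radius/curvature bounds in Theorems~\ref{rest} and \ref{ext:cest}; this parallels Colding--Minicozzi's one-sided estimate in the minimal case. Rescaling $\cD\mapsto\lambda\cD$ takes $H$-disks to $(\lambda^{-1}H)$-disks, so I may normalize to $R=1$. If the theorem fails, extract sequences $\ve_n\downarrow 0$, $C_n\to\infty$ and $H_n$-disks $\cD_n$ satisfying the two disjointness hypotheses on $\B(1)$ yet containing points $x_n\in\cD_n\cap \B(\ve_n)\cap\{x_3>0\}$ with $|A_{\cD_n}|(x_n)\to\infty$.

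The case $H_n\geq\mathcal{H}_0>0$ is dispatched immediately: since $\partial\cD_n\cap\B(1)\subseteq \cD_n\cap\{x_3=0\}\cap\B(1)=\emptyset$, the extrinsic distance from $x_n$ to $\partial\cD_n$ is at least $1-\ve_n\geq 1/2$ for large $n$, and Theorem~\ref{ext:cest} then gives $|A_{\cD_n}|(x_n)\leq K_0(1/2,\mathcal{H}_0)$, contradicting the blow-up. Hence we may assume $H_n\to 0$, the nearly minimal regime.

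Fix a small $\tau>0$. Translate $\cD_n$ by $-x_n$ to re-center the blow-up at $\vec{0}$, and apply Theorem~\ref{mainextension} at an interior maximum $p_n$ of $|A|(\cdot)(\eta\bar\ve-|\cdot|)$ within $\overline{\B}(\vec{0},\eta\bar\ve)$; since $|A|(\vec{0})\to\infty$, the hypotheses of the theorem are satisfied for $n$ large. After the further translation $-p_n$ and a rotation aligning the helicoidal frame with vertical, the theorem produces inside $\cD_n$ a $3$-valued graph $u_n$ over an annulus whose outer radius $\bar\ve/\Omega_\tau$ is bounded below and whose inner radius $\omega_\tau/|A|(p_n)$ tends to $0$, with $|\nabla u_n|<\tau$. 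The one-sided hypothesis now constrains the helicoidal axis: if it were tilted away from the $x_3$-direction by a definite angle, the graph sheets (nearly horizontal in their own frame) would be tilted in the ambient $\rth$, and their outer portions would sweep down through $\{x_3=0\}$ inside $\B(1)$, violating $\cD_n\cap\{x_3=0\}\cap\B(1)=\emptyset$. Hence the axes must align with the $x_3$-direction in the limit.

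With vertical axes, slopes less than $\tau$, and ever more sheets, I would extract a subsequential $C^\alpha$ limit on compact subsets of $\B(1)\setminus\{\vec{0}\}$, using the uniform graphical bounds supplied by Theorem~\ref{mainextension} together with $H_n\to 0$. The limit is a minimal lamination whose leaves near $\vec{0}$ accumulate on a horizontal plane $\{x_3=c\}$; hypothesis (a) and $x_3(x_n)\to 0$ force $c=0$. This puts $\{x_3=0\}$ as a lamination limit of surfaces lying strictly above it in $\B(1)$, contradicting the strong maximum principle for minimal laminations (applicable in the limit because $H_n\to 0$). The principal obstacle is precisely the axis-alignment step and the upgrading of the subsequential convergence into a form where this maximum principle bites; both require the finer analytical results of \cite{mt7,mt8,mt9}. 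Finally, the ``in particular'' clause is immediate from $2H^2\leq |A_{\cD}|^2$: the pointwise bound $|A|\leq C/R$ yields $H\leq C/(\sqrt{2}\,R)\leq C/R$.
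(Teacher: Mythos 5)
You should first be aware that this paper does not prove Theorem~\ref{th} at all: it is imported verbatim from~\cite{mt9}, where its proof is the main content of that paper (using, among other things, the weak chord-arc property of~\cite{mt8} and limit-lamination arguments of the type carried out in Section~\ref{hlam}). So your sketch is an attempt to reprove a deep external result, and it cannot be checked against anything internal to this paper; judged on its own terms it has genuine gaps.

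Concretely: (i) your dispatch of the case $H_n\ge \mathcal{H}_0$ rests on the inclusion $\partial\cD_n\cap\B(1)\subseteq \cD_n\cap\{x_3=0\}\cap\B(1)$, which is false. The hypotheses only exclude $\partial\cD$ from $\B(R)\cap\{x_3>0\}$, so the boundary may lie in $\B(1)\cap\{x_3<0\}$, arbitrarily close to the plane and hence to $x_n$, whose height may tend to $0$; Theorem~\ref{ext:cest} requires a lower bound on the extrinsic distance to \emph{all} of $\partial\cD$, so it cannot be invoked with $\delta=1/2$, and the component of $\cD_n\cap\B(1)$ through $x_n$ (whose relative boundary does lie on $\partial\B(1)$) need not be a disk, so the theorem does not apply to it either. (ii) The same absence of a boundary-free ball of definite size around $x_n$ invalidates your application of Theorem~\ref{mainextension} after translating by $-x_n$: that theorem requires $\partial M\subset\partial\B(\ve)$, i.e.\ the entire boundary at a definite distance from the recentred point, and producing a substitute for this under the one-sided hypotheses is precisely the hard part of~\cite{mt9}; it is not achieved by a one-line recentring, and without it the multivalued graph, the axis-alignment step, and the sheet structure you rely on are unavailable. (iii) Your closing contradiction is not a contradiction: surfaces (or lamination leaves) lying strictly above $\{x_3=0\}$ may perfectly well accumulate onto that plane from one side --- this is the standard picture of a limit leaf and violates no maximum principle; in the minimal model~\cite{cm23} the contradiction comes from a much finer analysis of multivalued graphs trapped between sheets combined with properness/chord-arc input, and the $H>0$ case requires further work on top of that. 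The only step of your proposal that is complete is the last one: $|A_\cD|^2\ge 2H^2$ does yield the ``in particular'' clause from~\eqref{eq1}.
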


The next corollary follows immediately from   Theorem~\ref{th} by a simple
rescaling argument. It roughly states that
we can replace the  $(x_1,x_2)$-plane   by any surface that has a fixed
uniform estimate on the norm of its second fundamental form.

\begin{corollary} \label{cest2}
Given an $a\geq 0$,
there exist $\ve\in(0,\frac{1}{2})$ and $C_{a} >0$ such that
for any $R>0$, the following holds.
Let $\Delta$ be a compact immersed surface in $\B(R)$ with
$\partial \Delta \subset \partial \B(R)$, $\vec{0}\in \Delta$
and satisfying $|A_{\Delta}| \leq a/R$. Let $\cD$ be an $H$-disk such that
$$\cD\cap \B(R)\cap\Delta=\O \quad \mbox{and} \quad \partial \cD\cap \B(R)=\O.$$
Then:
\begin{equation} \label{eq1*}
\sup _{x\in \cD\cap \B(\ve R)} |A_{\cD}|(x)\leq \frac{C_{a}}{R}.
\end{equation} In particular, if
$\cD\cap \B(\ve R)\neq \O$, then $H\leq \frac{C_{a}}{R}$.
\end{corollary}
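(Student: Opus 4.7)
I will prove Corollary~\ref{cest2} by a rescaling argument that reduces to Theorem~\ref{th} applied on a smaller ball with a translated horizontal reference plane.

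\emph{Step 1: rescaling and graphical representation of $\Delta$.} All the hypotheses and the conclusion are homogeneous under the dilation $x\mapsto x/R$ (which sends $\cD$ to an $(HR)$-disk and multiplies $|A_\Delta|$ and $|A_\cD|$ by $R$), so it suffices to prove the result for $R=1$. Let $\ve_0\in(0,1/2)$ and $C>0$ be the universal constants provided by Theorem~\ref{th}, and set $\eta:=\ve_0/20$. The bound $|A_\Delta|\leq a$ together with $\vec 0\in\Delta$ yields, by the standard graphical representation of surfaces with bounded second fundamental form, a radius $\rho_0=\rho_0(a)>0$ such that, after rotating coordinates so that $T_{\vec 0}\Delta=\{x_3=0\}$, the component of $\Delta\cap\B(\rho_0)$ through $\vec 0$ is the graph $\{x_3=u(x_1,x_2)\}$ of a function satisfying $|u|+\rho_0|\nabla u|\leq\eta\rho_0$. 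This graph is contained in the slab $\{|x_3|\leq\eta\rho_0\}$ and separates $\B(\rho_0)$ into an upper region $U^+$ and a lower region $U^-$.

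\emph{Step 2: applying Theorem~\ref{th}.} Set $\ve:=\ve_0\rho_0/8$ and take any $x_0\in\cD\cap\B(\ve)$. Since $\cD\cap\B(1)\cap\Delta=\emptyset$, the connected component of $\cD\cap\B(\rho_0)$ through $x_0$ is disjoint from the graph of $u$ and so lies entirely in $U^+$ or in $U^-$; after composing the rotation with $x_3\mapsto -x_3$ if necessary I assume it lies in $U^+\subset\{x_3>-\eta\rho_0\}$. I then apply Theorem~\ref{th} with the shifted reference plane $P:=\{x_3=-\eta\rho_0\}$ and the ball $B:=\B(\rho_0/2,(0,0,-\eta\rho_0))$ centered on $P$: after the translation $y=x+(0,0,\eta\rho_0)$ sending $P$ to $\{y_3=0\}$, Theorem~\ref{th} yields $|A_\cD|(x_0)\leq 2C/\rho_0$, with the choice $\ve=\ve_0\rho_0/8$ placing $x_0$ inside the smaller ball of Theorem~\ref{th} on the positive side of $P$. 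Setting $C_a:=2C/\rho_0$ completes the argument.

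\emph{Main obstacle.} The principal technical point is verifying the hypothesis $\cD\cap B\cap P=\emptyset$ of Theorem~\ref{th}: only the component of $\cD\cap\B(\rho_0)$ through $x_0$ is a priori known to avoid $P$, whereas components of $\cD$ that lie in $U^-$ could in principle cross $P$ inside $B$. I would resolve this either by noting that the argument proving Theorem~\ref{th} in~\cite{mt9} applies equally well to any embedded $H$-piece of $\cD$ whose boundary lies on $\partial B$ (so that one may replace $\cD$ by the $H$-piece of $\cD\cap\overline B$ containing $x_0$), or by recasting the above as a blow-up/contradiction argument: along a hypothetical counterexample sequence $(\cD_n,\Delta_n,x_n)$ with $|A_{\cD_n}|(x_n)\to\infty$, pass to a smooth limit $\Delta_n\to\Delta_\infty$ with $|A_{\Delta_\infty}|\leq a$, invoke Theorem~\ref{mainextension} to extract a multi-valued graph component of $\cD_n$ at the blow-up scale, and verify that this component must eventually meet $\Delta_n$ — contradicting $\cD_n\cap\Delta_n=\emptyset$.
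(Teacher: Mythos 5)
Your overall route (rescale to $R=1$, use $|A_\Delta|\leq a$ to write the piece of $\Delta$ through $\vec 0$ as an almost-flat graph at a scale $\rho_0(a)$, then invoke Theorem~\ref{th} with a slightly shifted plane) is exactly the ``simple rescaling argument'' the paper has in mind; the paper gives no further detail. The problem is that the step you yourself flag as the ``main obstacle'' is a genuine gap, and neither of your proposed repairs closes it. Theorem~\ref{th} requires the \emph{entire} $H$-disk $\cD$ to miss the plane inside the ball of application, whereas your hypotheses only control the component of $\cD$ through $x_0$: pieces of $\cD$ lying below the graph of $u$ are disjoint from $\Delta$ but are otherwise unconstrained, so they may well cross $P=\{x_3=-\eta\rho_0\}$ inside $B$, and the hypotheses of Theorem~\ref{th} simply cannot be verified for the shifted plane. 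Your fix (a) — replace $\cD$ by ``the $H$-piece of $\cD\cap\overline B$ containing $x_0$'' — is not justified: a component of $\cD\cap\overline B$ need not be simply connected, and the disk hypothesis in one-sided curvature estimates is essential (already in the minimal case the estimate fails for annular pieces, e.g.\ rescaled catenoids), so asserting that the proof in~\cite{mt9} ``applies equally well'' to such a piece is an unverified and, as stated, doubtful claim. Your fix (b) is essentially an attempt to reprove a one-sided curvature estimate from Theorem~\ref{mainextension}; the key assertion that the extracted multi-valued graph ``must eventually meet $\Delta_n$'' is exactly the hard point and is not argued — the rotation in Theorem~\ref{mainextension} is dictated by the helicoidal blow-up at the point of large curvature, not by $\Delta_n$, so there is no a priori reason the multigraph is positioned so as to hit $\Delta_n$.

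The missing ingredient is available inside the paper: the weak chord-arc property (Theorem~\ref{thm1.1}, from~\cite{mt8}). Since $\partial\cD\cap\B(1)=\O$, the intrinsic ball $B_{\cD}(x_0,\rho_0/2)$ lies in $\cD-\partial\cD$, so Theorem~\ref{thm1.1} produces a \emph{compact $H$-subdisk} $\cD'=\cD(x_0,\delta_1\rho_0/2)$ through $x_0$ with piecewise smooth boundary contained in $\partial\B(x_0,\delta_1\rho_0/2)$. Because $\cD'$ is connected, misses $\Delta$, and (for $\ve$ and the ball radius chosen small compared with $\rho_0$) lies in the region of $\B(\rho_0)$ separated by the graph of $u$, the whole subdisk $\cD'$ stays on one side of the shifted plane $P$; applying Theorem~\ref{th} to $\cD'$ in a ball centered on $P$ of radius comparable to $\delta_1\rho_0$, with $\eta$ and $\ve$ small relative to $\ve_0\delta_1\rho_0$, then gives $|A_{\cD}|(x_0)=|A_{\cD'}|(x_0)\leq C_a$ (and the final assertion $H\leq C_a$ follows from $|A_{\cD}|^2\geq 2H^2$). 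In short: your reduction is the right one and matches the paper's intent, but as written Step~2 does not verify the hypotheses of Theorem~\ref{th}, and the correct patch is to pass to the chord-arc subdisk through $x_0$ rather than to an arbitrary component of $\cD\cap\overline B$ or to a blow-up argument.
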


Next, we recall the notion of
flux of an $H$-surface; see for instance~\cite{kks1,ku2,smyt1} for
further discussions of this invariant.

\begin{definition} \label{def:flux} {\rm
Let $\gamma$ be a 1-cycle in an $H$-surface $M$. The  {\em flux} of
$\gamma$ is $\int_{\gamma}(H\gamma+\xi)\times \dot{\gamma}$, where $\xi$
is the unit normal to $M$ along $\gamma$. }
\end{definition}

The flux of a 1-cycle in   an $H$-surface $M$ is a homological invariant and
we say that  $M$ has {\em  zero flux} if the flux of any 1-cycle in $M$ is zero;
in particular, since the first homology group of a disk is zero,   an $H$-disk has  zero flux. 
Finally, we also recall the following definition.

\begin{definition} \label{def:lbsf} {\rm Let $U$ be an open set
in $\rth$. We say that a sequence of surfaces
$\{\Sigma(n)\}_{n\in \N}\subset U$,  has
{\em locally bounded norm of the second
fundamental form in $U$} if for every compact
subset $B$ in $U$, the norms of the second fundamental forms
of the surfaces $\Sigma(n)\cap B$ are
uniformly bounded. } \end{definition}

\section{Proof of  the limit lamination theorem for $H$-disks.} \label{hlam}

\begin{proof}[Proof of Theorem~\ref{thm2.1}] Let $\{M_n\}_{n\in\mathbb N}$ be a
sequence of constant mean curvature disks
that satisfy the hypotheses of Theorem~\ref{thm2.1}.
The extrinsic radius estimates   given  in
Theorem~\ref{rest} imply $\lim_{n\to \infty}H_n=0$.
We will break the proof into the following two cases.\vspace{.2cm}

\noindent {\bf Case A:} $\{M_n\}_{n\in\mathbb N}$
has locally bounded norm of the second fundamental
form in $\rth$.\vspace{.1cm}

\noindent {\bf Case B:} There is a compact set $B$ and the sequence
$\{M_n\}_{n\in\mathbb N}$ has unbounded norm of the second fundamental
form in $B$.\vspace{.1cm}

Some of the arguments in the proofs of Cases A and B listed
above are borrowed from  arguments
 appearing in the proofs of  similar Cases A and B in the proof
of Proposition~3.1  in~\cite{mt9}, where all the details are given.
Consequently, the reader may  wish to  consult the proof of
 Proposition~3.1  in~\cite{mt9} before continuing to read what follows.

We will first prove the theorem under the hypothesis
that Case A holds.  In this case we can follow the
arguments in the proof of Case A of Proposition~3.1 in~\cite{mt9} to conclude that a
subsequence of $\{M_n\}_{n\in\mathbb N}$ converges
with multiplicity one or two to a helicoid $\cH=M_\infty$ containing $\vec{0}$.
 For the sake of completeness we outline the main arguments in the proof of this fact.
 First a standard compactness argument
 using the fact that the surfaces $M_n$ have  uniformly bounded
 norm of the second fundamental form in balls of $\rth$
implies  that after replacing by a subsequence,
the surfaces converge $C^\a$, for any $\a\in (0,1)$,  to a
minimal lamination $\cL$ of $\rth$, and
since $|A_{M_n} |(\vec{0})\geq \de$, then the
leaf $M_\infty$ of $\cL$ passing through the origin $\vec{0}$
has the norm of its second fundamental form bounded
from below by $\de$ at $\vec{0}$.  By Theorem~1.6
in \cite{mr8}, the limit leaves of
 the  minimal lamination  $\cL$ form a closed set ${\bf C}$ of parallel
 planes and every non-flat leaf of $\cL$,
 including $M_\infty$, is properly embedded
 in a component of the collection of open slabs and half-spaces  that
 form the components of $\rth -{\bf C}$.
 For any non-planar leaf $L$ of $\cL$
certain subdomains of  the surfaces $M_n$ converge to it
with multiplicity one or two, as $n$ tends to infinity; otherwise a
higher order convergence
would imply that $L$  is stable and hence planar,
which is a contradiction.
From the multiplicity one or two convergence of the $\{M_n\}_{n\in\mathbb N}$
to the non-flat
leaf $L$, a curve lifting argument implies
that $L$ has genus zero. By the properness of finite genus leaves of a
minimal lamination of $\rth$ (Theorem~7 in~\cite{mpr3}),
$L$ must be properly embedded in $\rth$. Hence,  all of the leaves of
$\cL$ are properly embedded. Since  the leaf
$M_\infty$ is not flat, then the strong halfspace theorem in~\cite{hm10}
implies that $M_\infty$ is the only leaf
in $\cL$.
As the surfaces $M_n$ converge (on compact subsets of $\rth$)
with multiplicity one or two to $M_\infty$,
a standard lifting argument of curves
on $M_\infty$ to the surfaces $M_n$ for $n$
large implies that $M_\infty$ has zero flux,
since the $H_n$-disks $M_n$ have zero flux.
Since properly embedded minimal surfaces
in $\rth$ with zero flux have one end~\cite{cmw1}, $M_\infty$
must have one end; hence  $M_\infty$ is
simply-connected.  By the uniqueness of the
helicoid~\cite{mr13}, $M_\infty$ must be a helicoid (see also~\cite{bb1}
for a proof).
Therefore, we conclude that a
subsequence of the original sequence $\{M_n\}_{n\in\mathbb N}$ converges
with multiplicity one or two to a helicoid $\cH=M_\infty$ containing $\vec{0}$.

 We now check that for
$n$ large every component of $M_n\cap \B(1)$
is an open  subdisk of the disk $M_n$ whose closure is a compact disk
with piecewise smooth boundary. Otherwise, after replacing by a subsequence,
for $n$ sufficiently large there exists a component
$\Delta_n$ of $M_n\cap \B(1)$ whose closure
$\ov{\Delta}_n\subset \Int(M_n)$
is not  a compact disk with piecewise smooth boundary. In the case where $M_n$  intersects
$\partial \B(1)$ transversely,  $M_n\cap \oB(1)$ is a smooth compact surface
that is the closure of  $M_n\cap \B(1)$ and the following arguments
can be simplified; therefore, on a first reading of the next paragraph
the reader might want to consider this special generic case first.

Since $M_n\cap \oB(1)$ is a two-dimensional semi-analytic
set in $\rth$ and $M_n\cap \partial \B(1)$ is an analytic subset of the sphere
$\partial \B(1)$, then, by \cite{lo1}, $M_n\cap \oB(1)$
admits a triangulation  by analytic  simplices, and the interiors of
the 2-dimensional simplices are contained in
$\B(1)$ because otherwise by analyticity then
$M_n \subset\partial \B(1)$ which is false. Since the inclusion map
of $M_n$ is an injective immersion, then it follows that  $\ov{\Delta}_n$
is a semi-analytic subset of $M_n$ that can be triangulated with
a finite number of closed 2-dimensional analytic simplices whose
interiors are contained in ${\Delta}_n\subset \B(1)$  and
the component $\ov{\Delta}_n\cap \partial \oB(1)$ is  a connected 1-dimensional
analytic subset of $M_n$, where we identify $M_n$ with its image in $\rth$;
note that $\ov{\Delta}_n\cap \partial \oB(1)$ does not contain any isolated points
by the mean curvature comparison principle.  By the
elementary topology of the disk $M_n$ and using arguments as in~\cite{my1}, one can check that
$\ov{\Delta}_n$ fails to be a
disk with piecewise smooth analytic boundary
if and only if there exists a
simple closed piecewise analytic  curve $\G(n)  $ contained in the 1-dimensional simplicial sub-complex
of $ \ov{\Delta}_n \cap \partial \B(1)$ such that  $\G(n) $
does not bound a  disk in $\ov{\Delta}_n$. In the case that $M_n$
is transverse to $\partial \B(1)$, then $\G(n)$ can be chosen to be the boundary curve
of  a   component of $M_n\cap (\rth -\B(1)) $ that has its entire boundary in $\partial \B(1)$.

Arguing by contradiction, suppose   $\ov{\Delta}_n$ is not a compact disk.
Let $D_n$ denote the compact subdisk of $M_n$ with
boundary $\G(n)\subset M_n\cap \partial \B(1)$ and notice
that $D_n\not \subset\oB(1)$.  Hence, there is a point $p_n\in D_n$ that
has maximal distance $R_n>1$ from the origin. Since the boundary of $D_n$ lies
in $\partial B(1)$ and $D_n$ lies in $\rth - \oB(1)$ near $\partial D_n$,
then $p_n$ is an interior point of $D_n$ not contained in $\B(1)$
and $D_n$ lies inside the closed ball $\ov{\B}(R_n)$ and intersects
$\partial \B(R_n)$ at the point $p_n$.  By the mean curvature
comparison principle applied at the point $p_n$,
the constant mean curvature of $M_n$ is at least $1/R_n$
and so, since the constant
mean curvature values of the surfaces $M_n$ are tending to zero as $n$ goes to infinity,
the interior points $p_n\in D_n$
are diverging to infinity in $\rth$ as $n$ goes
to infinity. Let $p$ be a point on the
axis of $\cH$ that is closest to the origin. Let $r_0=|p|$ be the distance
from $p$ to the origin and let $E$ be the smooth compact disk
$\cH\cap \ov{\B}(p,r_0+2)$. Then because of the
multiplicity one or two convergence of the surfaces $M_n$ to $\cH$, for
$n$ large,  $M_n$ contains a unique smooth disk
subdomain $M_E(n)$ that contains the origin
and that is a small normal graph
over $E$, and in the case the multiplicity of convergence is two,
 $M_n$ contains another smooth disk
subdomain $M_E'(n)$ that does not contain the origin
and that is a small normal graph
over $E$; furthermore, for $n$ sufficiently large,
 every point of $M_n\cap \B(1)$ is contained in
$M_E(n)\cup M_E'(n)$.
In particular, for $n$ large,
$\Delta_n\subset (M_E(n)\cup M_E'(n))\subset \B(p,r_0+3)$,
and so $\G(n)\subset M_E(n)$ or  $\G(n)\subset M_E'(n)$.  If $\G(n)\subset M_E(n)$
then it bounds in $M_E(n)$ a disk $D'_n$,
which   must be the disk $D_n$; if $\G(n)\subset M_E'(n)$
then it bounds in $M_E'(n)$ a disk $D'_n$,
which again must be the disk $D_n$. This is a contradiction since
$ D_n \not\subset \B(p,r_0+3)$ for $n$ large, but
$D_n'\subset (M_E(n)\cup M_E'(n))\subset \B(p,r_0+3)$.
This proves that
every component $\Delta_n$ of $M_n\cap \B(1)$
is an open  subdisk of the disk $M_n$ and $\ov{\Delta}_n$ is a compact
disk with piecewise smooth boundary in $\partial \B(1)$.

To complete the proof of  Theorem~\ref{thm2.1} when Case A holds, it
remains to prove the upper bound of 10 on the intrinsic distances on $M_n$
between any two points on a fixed
disk $\Delta_n$ component of $M_n\cap \B(1)$ (and then by the continuity
the intrinsic distance function on $M_n$, the same property holds
for points in $\ov{\Delta}_n$).  This property
follows easily because it holds for the helicoid $\cH$; this completes the proof of
the theorem when Case A holds.

Assume now that Case B holds, i.e., the sequence $\{M_n\}_{n\in\mathbb N}$
does {\em not} have
bounded norm of the second fundamental form in some compact subset
$B\subset \rth$. We will prove that item~(B) of Theorem~\ref{thm2.1} holds in
this case. After
replacing $\{M_n\}_{n\in\mathbb N}$ by a subsequence, there is a point $x\in \rth$
closest to $\vec{0}$ such that there exist
  points $x_n \in M_n$  with $|A_{M_n}|(x_n)>n$ and $\lim_{n\to\infty} x_n=x $.
After another replacement by a subsequence, a straightforward
application of
Theorem~\ref{mainextension} produces a sequence of
3-valued graphs
$G_n\subset M_n$ with inner boundaries near $x_n$ that converge  to
a plane $P_x$ passing through $x$  (see the proof of
Claim~3.5 in~\cite{mt9} for additional details on the construction of the
graphs $G_n\subset M_n$
that converge to $P_x$).
Next choose a rotation
$T\colon \rth \to \rth$ such that $T(x)=\vec{0}$ and such that the
plane $T(P_x)$ is the $(x_1,x_2)$-plane, and replace the surfaces
$M_n$ by the rotated surfaces $T(M_n)$.
With an abuse of notation we will
let $M_n$ denote the rotated surfaces  $T(M_n)$ and note that
these new surfaces may possibly not pass through the
origin. Note that in Claim~\ref{vertline} we will prove that $x=\vec 0$.

After this replacement of $M_n$ by $T(M_n)$, $P_{\vec{0}}$ is the $(x_1,x_2)$-plane.
By the one-sided curvature estimates given in Theorem~\ref{th}
and in Corollary~\ref{cest2}, outside some closed solid vertical cone
$\mathcal C_{\vec 0}$ based at $\vec{0}$
of a certain fixed aperture determined only by
the curvature estimates given in Theorem~\ref{th} and Corollary~\ref{cest2},
the sequence of surfaces $\{M_n\}_{n\in\mathbb N}$
has locally bounded norm of the second fundamental
form.   After replacing $\{M_n\}_{n\in\mathbb N}$ by a subsequence, we may
assume that the sequence
$\{M_n\}_{n\in\mathbb N}$ converges to a minimal lamination  $\cL$ of $\rth- \cS$,
where $\cS$ is a smallest closed
set such that the sequence $M_n$
has locally bounded norm of the second fundamental form in
$\rth- \cS$, and no subsequence of these surfaces has a smaller
singular set of convergence; see for example, the proof of Claim~3.4 in~\cite{mt9}
for the diagonal type  argument leading to
this statement.  By the previous discussion in this
paragraph, through each point $y\in \cS$ there
passes a horizontal plane $P_y$ and $\cS\cap [\rth-\cC_y]=\O$,
where  $\cC_y$ is the vertical cone previously
described  but based at $y$ instead of at $\vec{0}$. As already observed, the one-sided curvature estimates
given in Corollary~\ref{cest2} imply that $\cS\cap P_y=\{y\}$.

Fix $p\in \cS$. We claim that there exist points $p_n\in M_n$,
with vertical tangent planes and such that $\lim_{n\to \infty}p_n= p$.
Otherwise in some small neighborhood of $p$, $M_n$ admits a
non-zero Jacobi function induced by the parallel Killing field $E_3=(0,0,1)$,
which would give curvature estimates of
$M_n$ near $p$ which is a contradiction.
Also this
geometric description gives that if $p_n\in M_n$,
$p_n$ is a sequence of points with vertical tangent planes
that is converging to a point $q\in\rth$,
then there exists a sequence of points
$q_n \in M_n$ with $\lim_{n\to\infty}d_{M_n}(p_n,q_n)= 0$ and with
$\lim_{n\to\infty}|A_{M_n}|(q_n) = \infty$.
Otherwise, after replacing by a subsequence,
a geodesic ball of fixed radius centered at $p_n$
would satisfy curvature estimates and
thus be a graph over a vertical plane.
By Corollary~\ref{cest2}, the existence of this graph would lead to a
contradiction because such a graph cannot
be contained in either $\cS$ or in a horizontal plane. In particular,
this implies that $q\in \cS$. In other words, if $\mathcal Z$ denotes
the set of points in $\rth$ that are limits of points $p_n\in M_n$
with vertical tangent planes, then this discussion implies that $\mathcal Z =\cS$.

The
following claim is analogous to the statement
of Corollary~3.8 in~\cite{mt9} and indeed
the proof is the same by  using the above observation
that intrinsically close to a point where
the tangent plane is vertical, there exist points where the norm of the second
fundamental form is arbitrarily large. We refer the reader to the proof of
Corollary~3.8 in~\cite{mt9} for details in the proof of the next result.

\begin{claim}\label{convhel}
Given $\Lambda, R,\ve, k>0$, there exists $N=N(\Lambda,R,\ve, k)\in \N$
such that the following holds. Let $\cH$ denote a vertical helicoid
containing the origin
with maximal absolute Gaussian
curvature $\frac12$ at the origin and let  $p_n\in M_n\cap \B(R)$
be a sequence of points where the tangent planes  are vertical.
For any $n>N $,  $|A_{M_n}|(p_n)>k$ and there exists a
rotation $T_n$ about the $x_3$-axis such that  the intersection set
$(|A_{M_n}|(p_n)[M_n-p_n])\cap \B(\Lambda)$ consists of one or two
connected components and each component
consists of a normal graph $u$ over its projection to
$ T_n(\mathcal H)$ or $-T_n(\mathcal H)$ and $\|u\|_{C^2}\leq \ve$.
\end{claim}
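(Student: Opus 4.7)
The plan is to argue by contradiction along the blow-up lines used in the Case~A and Case~B analyses above and in Corollary~3.8 of~\cite{mt9}. Suppose the claim fails for some fixed $\Lambda, R, \ve, k > 0$. Then, after passing to a subsequence, one of the following holds: (i) $|A_{M_n}|(p_n) \leq k$ for every $n$, or (ii) $|A_{M_n}|(p_n) \to \infty$, but no rotation $T_n$ about the $x_3$-axis realizes $(|A_{M_n}|(p_n)[M_n - p_n]) \cap \B(\Lambda)$ as a union of one or two normal graphs over $T_n(\cH)$ or $-T_n(\cH)$ with $C^2$-norm at most $\ve$. Passing to a further subsequence, $p_n \to p \in \oB(R)$, and the observation made just before the claim provides $q_n \in M_n$ with $d_{M_n}(p_n, q_n) \to 0$ and $|A_{M_n}|(q_n) \to \infty$. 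If $|A_{M_n}|(p_n)$ remained bounded by $k$, then Theorem~\ref{mainextension} applied at a suitable weighted-maximum point of $|A_{M_n}|$ near $q_n$ would exhibit a $3$-valued graph in $M_n$ whose inner boundary converges to $q_n$; the one-sided estimates of Theorem~\ref{th} and Corollary~\ref{cest2}, together with a Jacobi-field argument driven by the vertical Killing field $E_3$, would then give a uniform graph representation of $M_n$ on a definite-size intrinsic neighborhood of $p_n$, which is incompatible with the existence of the blow-up points $q_n$. Hence $|A_{M_n}|(p_n) \to \infty$, settling the first conclusion.

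To obtain the second conclusion I rescale by setting $\wt M_n := |A_{M_n}|(p_n)(M_n - p_n)$. Each $\wt M_n$ is an $\wt H_n$-disk with $\wt H_n = H_n / |A_{M_n}|(p_n) \to 0$, contains $\vec{0}$ with vertical tangent plane, and satisfies $|A_{\wt M_n}|(\vec{0}) = 1$. Applying Theorem~\ref{mainextension} to $M_n$ at the natural scale $1/|A_{M_n}|(p_n)$ produces $3$-valued graphs in $M_n$ whose images in $\wt M_n$ are $3$-valued graphs over annuli $A(\rho_n, \rho_n')$ with $\rho_n \to 0$ and $\rho_n' \to \infty$. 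Combined with the one-sided curvature estimates, this yields locally bounded norm of the second fundamental form on $\wt M_n$ away from the $x_3$-axis, and extraction of a further subsequence yields $C^\alpha$ convergence to a minimal lamination $\wt \cL$ of $\rth$ whose singular set of convergence is contained in the $x_3$-axis and which has a non-flat leaf $\wt M_\infty$ through $\vec{0}$ with $|A_{\wt M_\infty}|(\vec{0}) = 1$.

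The chain of arguments used in the Case~A analysis---properness of finite-genus leaves via \cite{mr8,mpr3}, zero flux inherited from the disks $M_n$, the strong halfspace theorem of~\cite{hm10}, and the uniqueness of the helicoid~\cite{mr13}---then identifies $\wt M_\infty$ with a vertical helicoid which, after a rotation $T_n$ about the $x_3$-axis, coincides with $T_n(\cH)$ or $-T_n(\cH)$, with multiplicity of convergence one or two. Promoting the $C^\alpha$ convergence to smooth convergence via the minimal surface equation then gives the required $C^2$ normal-graph description on $\B(\Lambda)$, contradicting alternative (ii). The main obstacle I anticipate is choosing $T_n$ and the sign of the helicoid consistently in $n$ and ruling out more than two graphical sheets over $\B(\Lambda)$; both are handled exactly as in the multiplicity-one-or-two argument of Case~A, using the zero-flux condition to force simple-connectedness of the limit leaf and the one-sided curvature estimates to control the number of nearby sheets of $\wt M_n$.
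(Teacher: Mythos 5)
Your reconstruction has two genuine gaps, both at the places where the real work of Corollary~3.8 of~\cite{mt9} (which the paper simply quotes) is done. First, your argument for the conclusion $|A_{M_n}|(p_n)>k$ does not go through as written: a curvature bound at the single point $p_n$ gives no control on any fixed-size neighborhood, so it cannot produce ``a uniform graph representation of $M_n$ on a definite-size intrinsic neighborhood of $p_n$''; and the Jacobi-field device you invoke cannot be run at $p_n$, because the Jacobi function $\langle N_n,E_3\rangle$ vanishes precisely where the tangent plane is vertical, so it has no sign near $p_n$ and yields no stability or curvature estimate there (in the paper this argument is used in the opposite direction: absence of vertical tangent planes near a point of $\cS$ would give estimates, a contradiction). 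What is actually needed is the description of $M_n$ at the scale of the curvature of the nearby blow-up points $q_n$ coming from Theorem~\ref{mainextension}: on that scale $M_n$ consists of one or two forming helicoids together with nearly horizontal multigraphs, so vertical tangent planes can only occur along the forming axes, where $|A_{M_n}|$ is comparable to $|A_{M_n}|(q_n)\to\infty$. Nothing in your sketch substitutes for this step.

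The second half has the same structural gap. Since $p_n$ is an arbitrary vertical-tangent-plane point, not chosen by any point-picking or maximality property, you cannot assert that $\wt M_n=|A_{M_n}|(p_n)(M_n-p_n)$ has locally bounded second fundamental form away from the $x_3$-axis, nor that the limit lamination has a nonflat leaf through $\vec 0$ with $|A|(\vec 0)=1$: a priori the curvature of $\wt M_n$ could concentrate at points tending to $\vec 0$, in which case $\vec 0$ lies in the singular set of convergence, there is no leaf through it, and the limit could be a foliation by planes --- exactly the scenario the claim must rule out inside $\B(\Lambda)$, and one on which your Case~A machinery (zero flux, halfspace theorem, uniqueness of the helicoid) cannot even get started. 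Relatedly, Theorem~\ref{mainextension} cannot be ``applied at the natural scale $1/|A_{M_n}|(p_n)$'' at $p_n$: it applies at a maximum of the weighted function $|A_M|(\cdot)(\eta\bar\ve-|\cdot|)$ and describes the surface at the scale of the curvature at that point, which need not be comparable to $|A_{M_n}|(p_n)$. The paper's proof is precisely the citation of Corollary~3.8 of~\cite{mt9}, whose argument rests on this curvature-scale helicoid description combined with Theorem~\ref{th} and Corollary~\ref{cest2} to cap the number of sheets at two, supplemented by the observation preceding the claim; your proposal reproduces the outer shell of that argument but omits the two steps that carry the load.
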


Since $\cS$ is a closed set in $\cC_{\vec 0}$ and the mapping
$x_3|_{\cC_{\vec 0}} \to \R$ is  proper,
it follows that $x_3(\cS)$ is a closed subset of $\R$ and
so $\R-x_3(\cS)$ is a possibly empty collection
of pairwise disjoint open intervals together with perhaps one or
two half-lines.
\begin{claim}\label{vertline} The intersection $\{x_3=t\}\cap \cS$ is a single point for every $t\in \R$. In fact, the set $\mathcal S$  is a vertical line passing through the
origin and $x=\vec{0}$.
\end{claim}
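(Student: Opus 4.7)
The plan is to establish the claim in four steps: (i) $\cS$ meets each horizontal plane in at most one point; (ii) $\vec{0}\in\cS$; (iii) $\cS$ is locally contained in a vertical line at each of its points; (iv) $x_3(\cS)=\R$. Combining (i) and (iv), $\cS$ is a graph over the $x_3$-axis; combining this with (iii), the graph has vanishing horizontal derivative and hence is a single vertical line. By (ii) this line passes through $\vec{0}$, so $\cS$ is the $x_3$-axis, and the closest-to-$\vec{0}$ choice of $x$ then gives $x=\vec{0}$.

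Fact (i) is immediate from the already-established identity $\cS\cap P_y=\{y\}$: two points of $\cS$ sharing a height lie in a common horizontal leaf and so must coincide. For (ii), if $\vec{0}\notin \cS$, then $\{M_n\}$ has locally bounded second fundamental form near $\vec{0}$, so a subsequence converges smoothly on a neighborhood of $\vec{0}$ to the leaf of $\cL$ through $\vec{0}$, which is a horizontal plane; this contradicts $|A_{M_n}|(\vec{0})\geq\ve>0$.

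For (iii), fix $p\in\cS$, and use $\cS=\mathcal Z$ together with the observation in the text (vertical-tangent points are intrinsically close to points of arbitrarily large curvature) to pick $p_n\in M_n$ with vertical tangent plane and $|A_{M_n}|(p_n)\to\infty$, $p_n\to p$. For any $\Lambda>0$, Claim~\ref{convhel} gives a rotation $T_n$ about the $x_3$-axis such that $|A_{M_n}|(p_n)(M_n-p_n)\cap\B(\Lambda)$ is a normal graph (of one or two components) over $T_n(\mathcal H)$ or $-T_n(\mathcal H)$ with $C^2$-norm at most $\ve$; in particular, the vertical-tangent locus of this model lies within $o(1)$ of the $x_3$-axis. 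A diagonal argument with $\Lambda_n\to\infty$, together with (i), then forces every point of $\cS$ in a small ball about $p$ to lie on the vertical line through $p$.

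The main obstacle will be (iv), the vertical unboundedness of $\cS$. The approach is by contradiction: assume $b:=\sup x_3(\cS)<\infty$ with $y^*\in\cS$ attaining this supremum (the bounded-below case is symmetric). Then $\cS\cap\{x_3>b\}=\emptyset$, and $\{M_n\}$ converges smoothly to horizontal planes in $\{x_3>b\}$. Applying Theorem~\ref{mainextension} and Claim~\ref{convhel} at $y^*$, the surfaces $M_n$ near $y^*$ exhibit a spiraling helicoidal structure whose vertical sheet separation tends to zero as $n\to\infty$. The simply-connected disk topology of $M_n$, combined with the boundary condition $\partial M_n\subset\rth-\B(n)$ and the existence of the large-curvature point $\vec{0}\in M_n$, then forces these spiraling sheets to extend vertically and to produce high-curvature vertical-tangent points at heights strictly above $b$ for $n$ large, contradicting $b=\sup x_3(\cS)$. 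The technical crux is making precise this matching argument between the local multi-valued-graph structure at $y^*$, the smooth horizontal-plane lamination above $b$, and the global disk-plus-boundary-at-infinity constraints.
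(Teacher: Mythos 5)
Your outline identifies the right ingredients, but the two steps that carry the actual content of the claim are not established. The decisive gap is step (iv). Proving that $x_3(\cS)=\R$ (equivalently, that every horizontal slice of $\cS$ is nonempty) is the heart of the claim, and your proposal replaces a proof with a declared ``technical crux'': that the spiraling helicoidal sheets at a topmost singular point $y^*$ must propagate to heights above $b=\sup x_3(\cS)$ because $M_n$ is a disk with boundary outside $\B(n)$. You give no mechanism for this propagation, and it is not a routine matter --- controlling how multivalued graphs extend and forcing curvature blow-up at new locations is exactly the kind of statement that requires heavy machinery. You also assert without justification that the limit lamination in $\{x_3>b\}$ consists of horizontal planes; a priori the leaves there could be nonflat. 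The paper's proof of nonemptiness goes a completely different way: assuming some height is missed, it takes the slab or half-space $W=x_3^{-1}((a,b))$ disjoint from $\cS$, shows the limit lamination $\cL_W$ together with $\partial W$ minus the finite set $\Delta=\partial W\cap\cS$ forms a minimal lamination of $\rth-\Delta$, shows $\cL_W\neq\emptyset$ using the one-sided curvature estimate (Theorem~\ref{th}), shows via the Stable Limit Leaf Theorem and a curve-lifting argument that the leaves are nonflat, proper in $W$ and of genus zero, and then invokes item~6 of Theorem~1.8 of~\cite{mpr11} to conclude that the closure of such a leaf is a properly embedded minimal surface contained in a half-space, contradicting the half-space theorem of~\cite{hm10}. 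None of this appears in your proposal, and without it (or a genuinely worked-out alternative) the claim is unproved.

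Step (iii) also has a real flaw of scale. Claim~\ref{convhel} describes $M_n$ only in the ball $\B(\Lambda)$ \emph{after} rescaling by $|A_{M_n}|(p_n)$, i.e.\ in a ball of radius $\Lambda/|A_{M_n}|(p_n)\to 0$ about $p_n$; a diagonal choice $\Lambda_n\to\infty$ cannot outrun the blow-up of $|A_{M_n}|(p_n)$, so the helicoidal model says nothing about points of $\cS$ at a fixed positive distance from $p$, and your conclusion that $\cS$ lies on the vertical line through $p$ in a ball of definite radius does not follow. The paper avoids this by working with the scale-invariant quantity that Claim~\ref{convhel} does control: the tangent direction of the vertical-tangent locus $\Gamma^R_n=N_n^{-1}(\{x_1^2+x_2^2=1\})\cap\B(R)$. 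Since these tangent lines converge uniformly to the vertical and $\Gamma^R_n$ converges to $\cS\cap\B(R)$, the limit must be a vertical segment; together with $\vec 0\in\cS$ this gives the $x_3$-axis. Finally, a smaller ordering problem: in your step (ii) you use that the leaf of $\cL$ through $\vec 0$ is a horizontal plane, which is not yet known at that point; in the paper the conclusion $x=\vec 0$ is drawn only after the slice nonemptiness and the resulting foliation of $\rth-\cS$ by horizontal planes have been established.
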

\begin{proof}
We first prove that the intersection $\{x_3=t\}\cap \cS$ is a single point for every $t\in \R$.
Arguing by contradiction, suppose that for some $t_0 \in \R$,
$\{x_3=t_0\}\cap \mathcal S= \O$. The point $t_0 $
is contained in a  component, $I=(a,b) \subset[\R-{x_3(\cS)}]$,
where perhaps $a$ or $b$ is equal to $\pm\infty$ but
not both. Without loss of generality, we may assume that $a\neq-\infty $
and $b\in ( a,\infty]$ and define $W=x_3^{-1}((a,b))$;
the set $W$ is either an open slab or an open
half-space.
Let $\Delta$ be the  set of  points
in $\partial W\cap \mathcal S$. By the previous discussion, $\Delta$
contains exactly one point of $\cS$ in each  component
of $\partial W$.
Since the sequence $\{M_n\cap W\}_n $ has uniformly locally
bounded norm of the second
fundamental form in $W$ and $\lim_{n\to \infty} H_n = 0$, the sequence converges to the minimal lamination
$\mathcal L_W=\cL\cap W$ of $W$. Again
the one-sided curvature estimates  in
Theorem~\ref{th} and Corollary~\ref{cest2}  give that
$[\cL_W\cup \partial W]-\Delta$ is a minimal
lamination of $\rth-\Delta$. Note that
$\cL_ W\neq \O$, otherwise after  placing a horizontal plane in $W$ disjoint
from $\cL$ and arbitrarily close to $\Delta$, one could apply the one-sided curvature
estimate in Theorem~\ref{th} to show
that $\Delta=\O$.
As already observed, the closure $\ov{\cL}_W$
of $\cL_W$ in $\rth -\Delta$ has the structure of a minimal lamination
of $\rth-\Delta$, and it is obtained by adding the  leaves $\partial W -\Delta$
to the lamination $\cL_ W$.

Following the arguments at the end of the proof of Proposition~3.1
in~\cite{mt9}, it is straightforward to check that the leaves of
the lamination  $\ov{\cL}_W$ have genus zero. Briefly, to see this property holds
 one first shows that
none of the leaves in this lamination contained  in $W$ are stable,
which by the Stable Limit Leaf Theorem
in~\cite{mpr19,mpr18} implies every leaf $L$ of
$\cL_W$ is  nonflat and proper in $W$ and the convergence of the surfaces
$M_n$ to $L$  has multiplicity
one or two. Finally, a curve lifting argument shows that $L$ has genus zero.

Let $L$ be a proper nonflat genus-zero leaf in $\cL_W$.
Since we can view  $\ov{\cL}_W$ to be
a minimal lamination of $\rth$
with a countable number of singularities (the
singularities being in the finite set $\Delta$),
then item~6 in Theorem~1.8  of~\cite{mpr11} implies
that the closure $\ov{L}$ of  $L$ in $\rth$
is a properly embedded minimal surface in $\rth$
which lies on one side of one of the boundary planes of $W$.
(In fact, to apply item~6 in Theorem~1.8
of~\cite{mpr11}  it suffices that the genus of $L$ is finite.)
But $L$ is contained in a half-space, which  contradicts
the half-space theorem in~\cite{hm10}.
  This contradiction  implies that for any $t\in\mathbb R$, $\{x_3=t\}\cap \mathcal S\neq\O$,
and, as previously observed, then $\{x_3=t\}\cap \mathcal S$ is a single point.

Recall again that by the one-sided curvature estimates given in
Theorem~1.1, for any $y\in\cS$,    $\cS\cap [\rth-\cC_y]=\O$
where $\cC_y$ is the vertical cone based at $y$ previously described.
Therefore $\cS$ is a
Lipschitz curve parameterized by its $x_3$-coordinate and
 the sequence
$\{M_n\}_{n\in\mathbb N}$ converges to a minimal foliation
$\cL$ of $\rth- \cS$ by punctured horizontal planes.
Since before the replacement of $M_n$ by the rotated surfaces $T(M_n)$
the norms of the second fundamental forms of the surfaces
were bounded away from zero at $\vec{0}$, the nature of the $C^\a$ convergence
of the surfaces $T(M_n)$ to the flat planes in $\cL$ outside of $\cS$ implies that
$T(\vec{0})\in \cS$; hence, the originally chosen  singular
point $x$ of convergence ``closest" to $\vec{0}$ must be   $\vec{0}$ itself
and therefore $T(\vec{0})=\vec{0}$.

We next check that the curve $\mathcal S$  is a vertical line passing through the
origin.
Let
\[
\Gamma^R_n=N_n^{-1}(\{x_1^2+x_2^2=1\})\cap \B(R)
\]
where $N_n$ denotes the Gauss map of $M_n$.
By the discussion before Claim~\ref{convhel}, the set
$\Gamma^R_n$ is nonempty and converges to $\cS\cap \B(R)$ as
$n$ goes to infinity. Therefore, it suffices to show that
$\Gamma^R_n$ converges $C^1$, possibly with multiplicity, to the line segment
$\{(0,0,t)\mid t\in(-R,R)\}$. This follows by
Claim~\ref{convhel} because for $n$ large, $\Gamma^R_n$ is
an analytic curve with tangent lines converging uniformly to
the $x_3$-axis in balls centered at the origin of any fixed radius.
\end{proof}

The previous claims complete the proof of part (a) of item~B
in the theorem.  It remains to prove the last
statement in the theorem.

\begin{claim}\label{twocomponents}
Let $\Gamma_n=N_n^{-1}(\{x_1^2+x_2^2=1\})\cap \B(3)$.
Then $\Gamma_n$ consists of one or two analytic curves for $n$ large.
\end{claim}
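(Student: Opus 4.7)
The plan is to combine the real-analyticity of the Gauss map $N_n$ with the local helicoid model supplied by Claim~\ref{convhel} and the fact from Claim~\ref{vertline} that $\cS$ is the $x_3$-axis. Because $M_n$ is a real analytic surface and $N_n$ is real analytic, $\Gamma_n$ is a $1$-dimensional analytic subvariety of $M_n \cap \B(3)$. Moreover, since $\{M_n\}_n$ converges $C^\a$ to the horizontal foliation of $\rth - \cS$, for $n$ large the unit normal of $M_n$ is nearly vertical outside any fixed tubular neighborhood of $\cS \cap \B(3)$; hence $\Gamma_n$ is eventually contained in an arbitrarily thin such neighborhood.

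Next I would localize along $\cS$. For each $p = (0,0,t)$ with $t \in [-3,3]$, apply Claim~\ref{convhel} with a choice of $\Lambda$, $\ve$, $k$ chosen uniformly in $t$ (using compactness of the segment) to produce, for $n$ large, points $p_n(t) \in M_n$ with vertical tangent planes and $p_n(t) \to p$, around which the rescaled surface $|A_{M_n}|(p_n(t))[M_n - p_n(t)] \cap \B(\Lambda)$ is a disjoint union of one or two normal $C^2$-graphs over a rotate of $\pm\cH$. Since the vertical-tangent locus of a standard helicoid is exactly its axis, the implicit function theorem applied to such a $C^2$-small graph produces, on each graph, a single smooth analytic arc of vertical-tangent points close to that axis. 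Hence on any neighborhood of $p$, $\Gamma_n$ is the disjoint union of one or two nearly vertical analytic arcs, each transverse to the horizontal planes.

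To pass from this local picture to the global count in $\B(3)$, I would use the $x_3$-coordinate as a parameter along $\Gamma_n$. Each local arc above projects diffeomorphically onto an interval of the $x_3$-axis, so $\Gamma_n$ is transverse to every level $\{x_3 = t\}$ and the fiber cardinality $\#(\Gamma_n \cap \{x_3=t\})$ is a locally constant, $\{1,2\}$-valued function of $t \in (-3,3)$. By connectedness of $(-3,3)$ this count is globally constant, and lifting back up, $\Gamma_n$ consists of either one or two connected analytic curves in $\B(3)$.

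The main obstacle I anticipate is exactly this global-versus-local step: a priori the one-or-two count from Claim~\ref{convhel} could jump as $t$ varies, or several arcs produced locally at different heights could reconnect, across the thin tube around $\cS$, into more than two global components. Both scenarios are ruled out by the embeddedness of $M_n$, which keeps the one or two local helicoid sheets disjoint all along $\cS \cap \B(3)$, together with the fact that the $1$ or $2$ in Claim~\ref{convhel} is inherited from the locally constant multiplicity of the convergence of $M_n$ to the horizontal foliation. The careful verification of this uniformity in $t$, already implicit in the discussion preceding Claim~\ref{convhel}, is what makes the global count coincide with the local one.
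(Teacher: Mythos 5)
Your reduction steps (analyticity of $\Gamma_n$, confinement to a thin tube around $\cS$, parameterization by $x_3$) are fine and mirror the paper's opening reduction to counting $\Gamma_n\cap\{x_3=0\}$. But the core of your argument has a genuine gap: you use Claim~\ref{convhel} to conclude that ``on any neighborhood of $p=(0,0,t)$, $\Gamma_n$ is the disjoint union of one or two nearly vertical analytic arcs.'' Claim~\ref{convhel} only describes the rescaled surface $|A_{M_n}|(p_n)[M_n-p_n]\cap\B(\Lambda)$, i.e.\ it controls $M_n$ in a ball of shrinking radius $\Lambda/|A_{M_n}|(p_n)$ around one vertical-tangent point. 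It says nothing about what happens at a fixed macroscopic neighborhood of $(0,0,t)$, nor even at intermediate scales. The scenario your argument fails to exclude is precisely the one the claim is about: three (or more) distinct points of $\Gamma_n\cap\{x_3=0\}$ converging to $\vec 0$ whose mutual separations $t_n\to 0$ are much larger than the curvature scale $1/|A_{M_n}|(\vec 0)$, so that each one carries its own local helicoid picture and none of them sees the others inside its $\B(\Lambda)$-window. Embeddedness and ``the locally constant multiplicity of convergence,'' which you invoke to dismiss the global-versus-local issue, do not rule this out; nothing at the curvature scale does.

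This is why the paper's proof proceeds differently: assuming three such points $p_1(n)=\vec 0$, $p_2(n)$, $p_3(n)=(t_n,0,0)$ with $|p_2(n)|\le t_n\to 0$, it blows up at the \emph{separation} scale, $\S_n=\frac{1}{t_n}M_n$, and runs a trichotomy on $t_n|A_{M_n}|(\vec 0)$. If this quantity tends to $0$, the curvature-scale limit helicoid would be attained with multiplicity $\ge 3$, forcing stability of the helicoid, a contradiction; if it tends to a finite positive constant, $\S_n$ converges smoothly to a fixed vertical helicoid, contradicted by the vertical tangent plane of $\S_n$ at $(1,0,0)$; and in the crucial case $t_n|A_{M_n}|(\vec 0)\to\infty$, $\S_n$ converges to a foliation by \emph{vertical} planes away from a horizontal singular line, and one must then invoke the multi-valued graph extension result (Theorem~\ref{mainextension}) to produce a nearly horizontal $3$-valued graph in $M_n$ at a scale incompatible with that vertical-plane foliation. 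That intermediate-scale analysis, and in particular the appeal to Theorem~\ref{mainextension}, is the essential content missing from your proposal; without it the step from the one-or-two count at curvature scale to the one-or-two count in $\B(3)$ is unjustified.
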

\begin{proof}
By Claim~\ref{vertline} and its proof, the set $\Gamma_n$
is a possibly disconnected analytic curve
that converges $C^1$ to the line segment
obtained by intersecting the  $x_3$-axis with $\B(3)$. Hence,
it suffices to prove that $\Gamma_n\cap\{x_3=0\}$
consists of one or two points for $n$ large.
Suppose that, after replacing by a subsequence,
$p_1(n), p_2(n), p_3(n)\in \Gamma_n\cap\{x_3=0\}$
are three distinct points converging to $\vec{0}$
and let $\Gamma_n(i)$ be the connected component
of $\Gamma_n$ containing $p_i(n)$, $i=1,2,3$.
Without loss of generality, after applying small
horizontal translations and rotations around the $x_3$-axis, we may assume that
$p_1(n)=\vec 0$, $p_3(n)=(t_n,0,0)$ and $|p_2(n)|\leq t_n$ satisfying
$\lim_{n\to \infty} t_n=0$.

Consider the subsequence of rescaled disks $\S_n=\frac{1}{t_n} M_n$
that contain the points $\vec{0}$ and $(1,0,0)$
and let
$q(n)=\frac{1}{t_n}{p_2(n)}$. Note that
$\vec 0,(1,0,0),q(n) \in \S_n\cap \{ x_3=0\}$, the tangent planes to $\S_n$
at $\vec 0$, $q_n$ and $(1,0,0)$ are vertical and $|q(n)|\leq 1$.
Claim~\ref{convhel} implies that a subsequence of the homothetically scaled surfaces $ |A_{M_n}|(\vec 0)M_n$
converges to a vertical helicoid $\cH$ containing the $x_3$-axis. We are going to use what
we have proven so far to analyze three exhaustive cases
that would occur after replacing by a subsequence.
\begin{enumerate}
\item $\lim_{n\to\infty}|A_{\S_n}|(\vec 0)= 0$;
\item $\lim_{n\to\infty}|A_{\S_n}|(\vec 0)= C\in (0,\infty)$;
\item $\lim_{n\to\infty}|A_{\S_n}|(\vec 0)= \infty$.
\end{enumerate}

If $\lim_{n\to\infty}|A_{\S_n}|(\vec 0)=\lim_{n\to\infty}t_n |A_{M_n}|(\vec 0)= 0$,
then $ |A_{M_n}|(\vec 0)M_n$ converges
to $\cH$ with multiplicity greater than two. This is
because $|p_2(n)|\leq t_n$ implies
there are at least three points in $ |A_{M_n}|(\vec 0)M_n\cap \{x_3=0\}$
converging to the origin and having
vertical tangent planes. However, a multiplicity greater than
two convergence implies stability.
Since the helicoid is not stable, we have obtained a
contradiction when case~1 holds.

Next, assume that
$\lim_{n\to\infty}|A_{\S_n}|(\vec 0)
=\lim_{n\to\infty}t_n |A_{M_n}|(\vec 0)= C\in (0,\infty)$.
Again,
by Claim~\ref{convhel} we have that a subsequence of
$ |A_{M_n}|(\vec 0)M_n$ converges to a vertical helicoid $\cH$
containing the origin and
\[
\S_n=\frac{1}{t_n} M_n=\frac{1}{t_n|A_{M_n}|(\vec 0)} |A_{M_n}|(\vec 0) M_n
\] must converge to the vertical helicoid $\frac 1C \cH$.
However, this leads to a contradiction
because $(1,0,0)$ is a point in $\S_n$ with
vertical tangent plane. This proves the claim when case~2 holds.

Finally, assume that $\lim_{n\to\infty}|A_{\S_n}|(\vec 0)=\lim_{n\to\infty}t_n |A_{M_n}|(\vec 0)=\infty$.
By our previous discussion, there exists a foliation $\cF$ of $\rth$ by planes and a line $\cS$
orthogonal to the planes in $\cF$ such that $\vec 0\in \cS$ and
after replacing by a  subsequence,  the new sequence of surfaces $\S_n$ converges
$C^\a$, for $\a\in (0,1)$, to the foliation
$\cF-\cS$ in $\rth-\cS$.
We claim
that $\cF$ must be a foliation by vertical planes. If $(1,0,0)\notin \cS$,
then the claim is true because the tangent plane to $\S_n$ at
$(1,0,0)$ is vertical. If $(1,0,0)\in \cS$ then, since
$\vec 0\in \cS$ and $\cS$ is a straight line orthogonal
to the planes in the foliation, $\cS$ is the $x_1$-axis
and $\cF$ is a foliation by vertical planes.

 Recall that  as $n$ goes to infinity,
the boundary curves of the disks $M_n$
are converging to infinity and the constant values of
their mean curvatures are going to zero.
For $n$ sufficiently large, let $\widetilde M_n\subset M_n$ be
the $H_n$-subdisk containing the origin with
$\partial \widetilde M_n\subset \partial \B(1)$. Without loss of
generality we can assume $H_n \in (0, \frac 1{2})$.
Let $G_1, \omega_1, \Omega_1$ be the values of the functions
$G_\tau, \omega_\tau, \Omega_\tau$ given by
Theorem~\ref{mainextension}, after fixing $\ve=\tau=1$, $\ov\ve=\frac 12$.
Note that since
$
\lim_{n\to\infty}t_n |A_{\widetilde M_n}|(\vec 0)= \infty,
$ for $n$ sufficiently large,
$|A_{\widetilde M_n}|(\vec 0)\geq \frac 1{t_n} G_1$. Thus,
when $n$ is sufficiently large, Theorem~\ref{mainextension} and the fact that $M_n$ converges to a foliation of $\rth$ minus the $x_3$-axis by horizontal planes,
imply that there exist points $p_n \in \wt{M}_n\cap\B(\frac{t_n}2)$ such
that the following holds: $\widetilde M_n-p_n$,
and thus $M_n-p_n$,  contains a 3-valued graph
$u_n$ over  $A(1\slash\Omega_1,\frac{\omega_1}{|A_{M_n}|(\vec 0)})$
with the norm of its gradient less than $1$ and with its inner
boundary in $\B(10\frac{\omega_1}{|A_{M_n}|(\vec 0)})$.

Note that since $p_n\in\B(\frac{t_n}{2})$, a subsequence of the
translated and scaled surfaces $\Sigma_n'=\frac 1{t_n} [M_n-p_n]$
must converge to the translated foliation $\cF'=\cF-\vec v$ away
from the translated singular set $\cS'=\cS-\vec v$,
where $\vec v=\lim_{n\to\infty}\frac{p_n}{t_n}$, $|\vec v|\leq \frac 12$.
In particular, the leaves of $\cF'$ are vertical planes and
the singular set $\cS'$ is a horizontal line perpendicular to
the leaves of $\cF'$. However, for $n$ large, $\Sigma_n'$
contains a 3-valued graph $u_n'=\frac 1{t_n}u_n$ over
$A(1\slash t_n\Omega_1,\frac{\omega_1}{t_n|A_{M_n}|(\vec 0)})$
with the norm of its gradient less than $1$ and with its inner
boundary in $\B(10\frac{\omega_1}{t_n|A_{M_n}|(\vec 0)})$.
Since $\lim_{n\to\infty}t_n=0$ and $\lim_{n\to\infty}t_n |A_{\widetilde M_n}|(\vec 0)= \infty$,
this contradicts the fact that the leaves of $\cF'$ are punctured
vertical planes. This contradiction finishes the proof of the claim.
 \end{proof}

The next claim follows from arguments similar to those
in the proof of the last claim and by
Claim~\ref{convhel}, after taking $\L=4$ and $\ve$ sufficiently small.

\begin{claim}\label{two-components}
After replacing by a subsequence, assume that the number of components
in $\G_n$ does not change.
For any fixed
$\theta_0\in [0,1)$, there exists an $N(\theta_0)\in\mathbb N$
such that, after replacing by a subsequence,  the following holds for $n\geq N(\theta_0)$.
For any $\t\in [-\t_0, \t_0]$, let
\[
\gamma_n^\theta := M_n\cap N_n^{-1}(\{x_1^2+x_2^2+x_3^2=1\}\cap \{ x_3=\theta\})\cap \B(3),
\]
then $\gamma_n^\t$ consists of one or two disjoint analytic curves,
the number of these curves does not depend upon the choice of $\t$
(and so this number is the same as the number of components in $\G_n$),
the curves $\gamma_n^\theta$ can be parameterized
smoothly by their $x_3$-coordinates and as $n$ goes to infinity
the  curves $\gamma_n^\theta $ and the sets
$\cup_{\tau\in [-\t_0, \t_0]}\gamma_n^\tau $ converge to the
interval $\{(0,0,t) \mid t\in (-3,3)\}$.
\end{claim}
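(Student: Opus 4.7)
The plan is to reduce the claim to a direct computation on the model vertical helicoid via Claim~\ref{convhel}, and then to globalize using the foliation structure from Claim~\ref{vertline}. Apply Claim~\ref{convhel} with $\L=4$ and $\ve$ chosen so small that, for every $p_n \in \G_n \cap \B(3)$ and all $n$ sufficiently large, the rescaled surface $|A_{M_n}|(p_n)[M_n - p_n]$ consists of one or two $C^2$-small normal graphs over a rotated vertical helicoid $T_n(\cH)$ on $\B(4)$. On a vertical helicoid parametrized as $(u\cos v,u\sin v,v)$, the third component of the unit normal is $u/\sqrt{1+u^2}$, so for each $\t\in(-1,1)$ the preimage under the Gauss map of $\{x_3=\t\}\cap\esf^2$ is the single helix $\{u = \t/\sqrt{1-\t^2}\}$, smoothly parametrized by $x_3=v$, and is a transverse (regular) preimage of that latitude. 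By $C^2$-closeness, for $n$ large and each $\t\in[-\t_0,\t_0]$ the preimage on the rescaled surface consists of as many graphical analytic arcs as there are helicoidal sheets --- that is, one or two --- matching the component count of $\G_n=\gamma_n^0$ given by Claim~\ref{twocomponents}.

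To rule out stray components of $\gamma_n^\t$ far from the $x_3$-axis, invoke Claim~\ref{vertline}: $M_n$ converges $C^\a$, for any $\a\in(0,1)$, on compact subsets of $\B(3)-\cS$ to the horizontal foliation, so the Gauss maps $N_n$ converge there to $(0,0,\pm 1)$. Since $\t_0<1$, the latitude band $\{|x_3|\le \t_0\}\cap\esf^2$ is at positive distance from the poles, hence for $n$ large $\gamma_n^\t \cap \B(3)$ is contained in an arbitrarily thin cylindrical tube about $\cS$, exactly where the helicoidal description from the previous paragraph applies. Gluing those local descriptions consistently along the $C^1$-convergent curve $\G_n$ produces, for each $\t\in[-\t_0,\t_0]$, an analytic $\gamma_n^\t$ consisting of one or two arcs graphical over the $x_3$-axis; the number is independent of $\t$ because every latitude in $[-\t_0,\t_0]$ is a regular value of the third component of the Gauss map on the model helicoid, so the preimages vary smoothly and without change in topology. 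The convergence $\gamma_n^\t\to\{(0,0,t):t\in(-3,3)\}$ (and the analogous convergence for the union $\cup_{\tau\in[-\t_0,\t_0]}\gamma_n^\tau$) is then immediate, because the relevant helix on the rescaled helicoid has bounded radius and so in the original scale has radius $O(1/|A_{M_n}|(p_n))\to 0$.

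The main obstacle is the uniformity in $\t$: showing that the one-or-two count really does not jump at some intermediate $\t\in[-\t_0,\t_0]$, equivalently that the local helicoidal type does not change along $\G_n$. Were such a jump to occur, one could extract a sequence of scales $t_n\to 0$ at which the discrepancy is witnessed, rescale $M_n$ by $1/t_n$, and run the three-case dichotomy from the proof of Claim~\ref{twocomponents}: the limits $|A_{\Sigma_n}|(\vec 0)\to 0$, $C\in(0,\infty)$, and $\infty$ are contradicted respectively by instability of multiplicity-three-or-higher limits of helicoids, by the uniqueness of the limit helicoid from Claim~\ref{convhel}, and (via the $3$-valued graph extension from Theorem~\ref{mainextension} applied to the rescaled disks) by the fact that the limit foliation $\cF$ is by horizontal planes. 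This rescaling dichotomy is precisely the ``arguments similar to those in the proof of the last claim'' indicated in the statement, and together with the helicoid computation and Claim~\ref{vertline} completes the proof.
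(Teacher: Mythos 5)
Your proposal follows essentially the same route as the paper, whose entire proof of this claim is the one-line remark that it "follows from arguments similar to those in the proof of the last claim and by Claim~\ref{convhel}": you spell out exactly that combination, namely the model computation that the latitude-$\theta$ preimage of the Gauss map on a vertical helicoid is a single helix graphical over the $x_3$-axis, localization of $\gamma_n^\theta$ near $\cS$ via the convergence in Claim~\ref{vertline}, and the three-case rescaling dichotomy from Claim~\ref{twocomponents} to rule out a change in the component count. One minor quantitative adjustment: since the latitude-$\theta_0$ helix on the normalized model helicoid lies at distance comparable to $\theta_0/\sqrt{1-\theta_0^2}$ from its axis, the radius $\Lambda$ in Claim~\ref{convhel} should be chosen large depending on $\theta_0$ (rather than fixed at $4$) when $\theta_0$ is close to $1$ --- a harmless fix, and the paper's own hint shares this looseness.
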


We next explain
how to construct the set $\cC_n$ consisting of one
or two disk components described in
part (b) of item~B of Theorem~\ref{thm2.1}.
This explanation will complete the proof of the theorem.

Fix a number $\be\in (0,1/3)$.
Let us first suppose that, after choosing a subsequence,    for all $n\in \N$,
$\Gamma_n$ is a single curve, which  we denote by $\g(n)$. The curve $\g(n)$ can be
parameterized by its $x_3$-coordinates that lie in
some interval $(-3+\ve^1_n, 3-\ve^2_n)$,
where, as $n$ goes to infinity,  the
sequences of numbers $\ve^1_n,\ve^2_n\in [0,\be)$
converge to zero and $\g(n)$ is $\be$-close to
the $x_3$-axis.
Then, by Claim~\ref{convhel} and by Claim~\ref{two-components}
with $\L$ chosen  sufficiently large
and $\ve$ chosen sufficiently small,
there exists a $N(\be)\in \N$ such that for $n\geq N(\be)$ the following holds:\\
For each $t\in (-1-\be,1+\be)$, the vertical  tangent plane $T_{\g(n)(t)} M_n$ intersects
$(M_n-\g(n)(t))\cap \oB(1+2\beta)$ transversely, where $M_n-\g(n)(t)$ is the set difference,
in a set containing two components that are arcs such that the closure
of their union  is an analytic
curve $\a_t^n$ with $\g(n)(t)\in\a_t^n$ and  $\a_t^n$   is $\beta$-close to a line
segment contained in
$\tau_t^n= T_{\g(n)(t)} M_n\cap \{x_3=t\}\cap \oB(1+3\beta)$ in the $C^1$-norm.

For $n\geq N(\be)$, define the following piecewise smooth disks:

$$\cC_n^\be=\bigcup_{t\in (-1-\be,1+\be)}\a^n_t, \quad F_n^\be=\bigcup_{t\in (-1-2\be,1+2\be)}\tau^n_t,
$$
\[
 \wh{F}_n^\be=\bigcup_{t\in (-1-2\be,1+2\be)}[\tau_t^n -(x_1(\g(n)(t)),x_2(\g(n)(t)),0)].
\]

Let $I_n\colon F_n^\be \to \wh{F}_n^\be$ be the associated diffeomorphism  induced by
translating the horizontal line segment  $\tau_t^n$ to the horizontal line segment
$\tau_t^n -(x_1(\g(n))(t),x_2(\g(n)(t)),0)$ at the same height. Note that $I_n(\g(n)(t))=(0,0,t)$ for
all $t\in (-1-2\beta, 1+2\beta)$. Claim~\ref{convhel} and the proof of Claim~\ref{vertline}
imply that
$N(\be)$ can be chosen large enough so
that the following condition  also holds:\\
 For   all  unit length
tangent vectors $v$ to $\tau_t^n\subset F_n^\be$ or to the arc $\g(n)\cap F_n^\be$,

\begin{equation} \label{c1}
\frac{1}{|(I_n)_{*}(v)|} \in [1,1+\be],
\end{equation}
where $(I_n)_{*}(v)$ denotes the tangent vector of the related image smooth arcs.

Claim~\ref{convhel}  implies that  $N(\be)$ can also be chosen large enough so
that the following condition also holds:\\
 For $n\geq N(\be)$, there is
a unique injective map $\Pi_n\colon \cC^\be_n\to   F_n^\be$
defined at a point $(x, y, z)\in \a_t^n$ by $$\Pi_n((x, y, z))=(x, y, t).$$
The map
$\Pi_n$   is smooth at points in $\cC_n^\be-\g(n)$,
 $\Pi_n$ restricted to $\g(n)\cap \cC_n^\be$ is the identity function and
for all  unit tangent
vectors $v$ to the pair of arcs $\a_t^n -\{\g(n)(t)\}$ or  to  $\g(n)\cap \cC_n^\be$,

\begin{equation} \label{c2}
\frac{1}{|(\Pi_n)_{*}(v)|}\in  [1,1+\be],
\end{equation}
where $(\Pi_n)_{*}(v)$ denotes the tangent vector of the related image smooth arcs.

We next define a distance function $D$ on $\rth$ that
makes it into a metric space  and that is useful
for estimating distances $D$ between pairs of points in the disks $\cC_n^\be$, see Figure~\ref{Dball}.

\begin{definition} \label{d} {\rm
Let $x=(x_1,x_2,x_3), y=(y_1,y_2,y_3)$ be points in $\rth$.
\ben \item  If $x_3=y_3$,
then $$D(x,y)=\sqrt{(y_1-x_1)^2+(y_2-x_2)^2}.$$
\item Otherwise,
$$D(x,y)=\sqrt{x_1^2+x_2^2} +|x_3-y_3|+\sqrt{y_1^2+y_2^2}.$$
\een
Given a point $p\in \rth$, we let $B^D(p,R)$ denote the open ball center at $p$ of radius $R$.}
\end{definition}

\begin{figure}[h]
\begin{center}
\includegraphics[width=11.5cm]{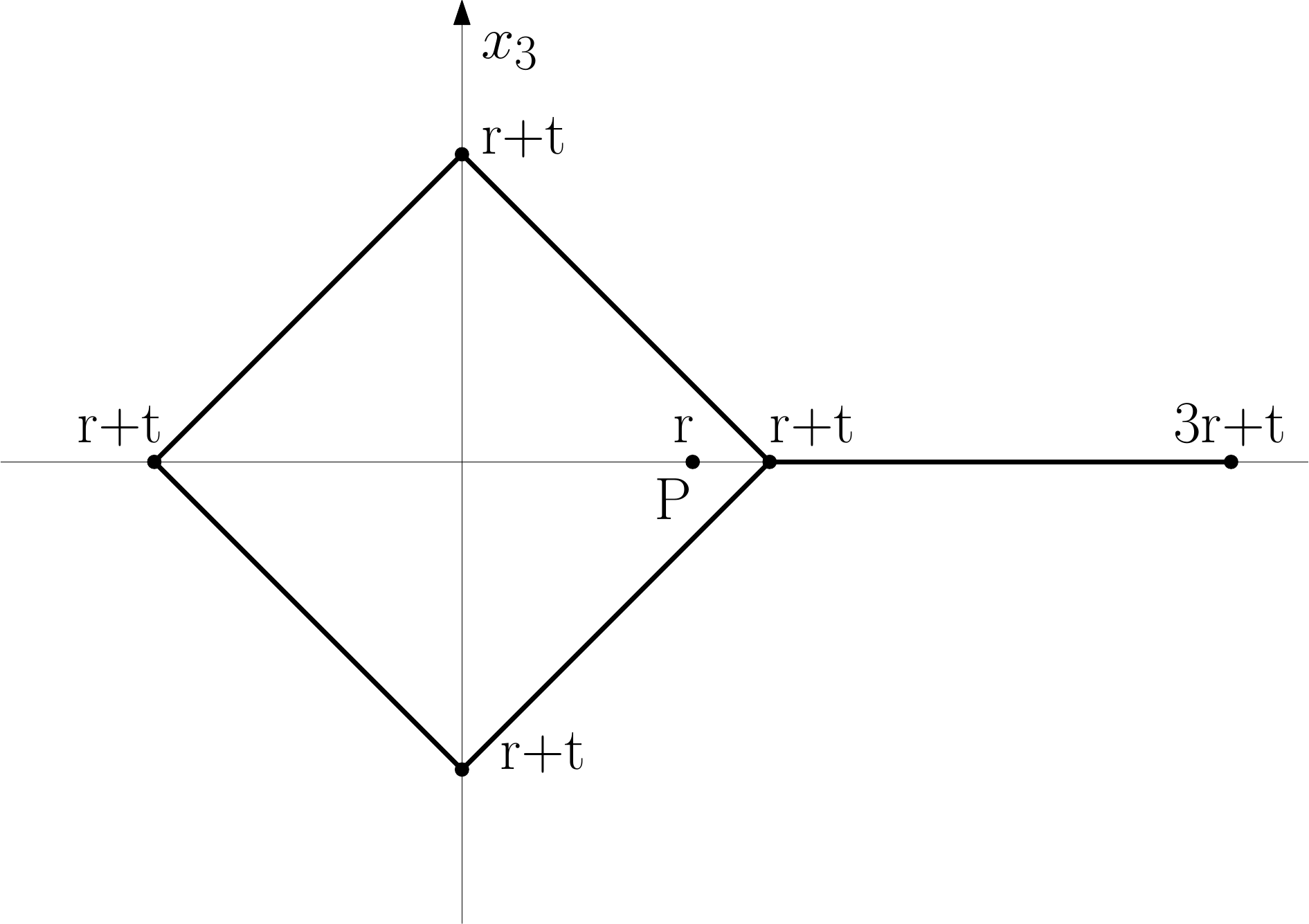}
\caption{The point $P$ is at distance $r$ from the $x_3$-axis. Given $t>0$,
the thick line represents a vertical section of the topological boundary
of $B^D(P,2r+t)$ containing $P$ and the $x_3$-axis.
For $s\in (0,r+t]$, the intersection $B^D(P,2r+t)\cap \{x_3=x_3(P)\pm s\} $
consists of an open disk of radius $r+t-s$ centered at $(0,0,x_3(P)\pm s)$.
 The intersection $B^D(P,2r+t)\cap \{x_3=x_3(P)\}$ consists of the open disk
 centered at $P$ of radius $2r+t$.
It is easy to see that if $Q\notin  B^D(P,2r+t)$ then $|Q-P|\geq \frac{\sqrt2}2 t$.
}
\label{Dball}
\end{center}
\end{figure}

Geometrically speaking, this distance function is defined at a
pair of points $p_1,p_2\in \rth$ that  lie on a horizontal line that intersects
the $x_3$-axis
to be  equal to the length of the line segment between them. Otherwise
it is equal to the length of the piecewise polygonal arc formed
by the horizontal line segment joining $p_1$ to the $x_3$-axis, the horizontal
line segment joining  $p_2$ to the $x_3$-axis and the line
segment in the $x_3$-axis that joins the end points of
these two horizontal line segments.
Note that $D$ induces a
metric space structure with distance function
$D_{\wh{F}_n^\be}$ on  $ \wh{F}_n^\be$ that is greater than or
equal to the Riemannian distance function on the surface $\wh{F}_n^\be$.

Let $x,y\in \cC_n^\be$ be  distinct points. If $x,y$ lie on the same
$\a_t^n$ arc in $\cC_n^\be$, then let $c_{x,y}^{\be,n}$ be
the subarc with end points $x,y$.
Otherwise, consider the embedded piecewise smooth
path $c_{x,y}^{\be,n}$ in  $\cC_n^\be$ formed by the unique
two subarcs in the $\a_t^n$-type curves
joining, respectively, the points $x,y$
to points in $\g(n)$ together
with the subarc in $\g(n)$ that joins the respective end points of these arcs in $\g(n)$;
the reader should note the similarity of
the construction of this piecewise smooth path  joining $x$ and $y$
with the construction of the piecewise polygonal arc in the previous
paragraph that joins  points $p_1$, $p_2$ in $\rth$ and where now $\g(n)$
plays the role of the $x_3$-axis.
Let
\begin{equation}\label{mapg}
G_n^\be\colon \cC_n^\be\to \wh F_n^\be, \quad \mbox{\rm where} \quad G_n^\be:=I_n^\be \circ\Pi_n^\be,
\end{equation}
and given $x,y\in \cC_n^\be$, let
$C_{x,y}^{\be,n}$ denote the embedded piecewise smooth path
$G_n^\be(c_{x,y}^{\be,n})$ that has end points $G_n^\be(x),G_n^\be(y)\in \B(1+4\be)$.
By equations \eqref{c1} and \eqref{c2},

\begin{equation} \label{c3}
\mbox{\rm Length}(c_{x,y}^{\be,n})\in
[\mbox{\rm Length}(C_{x,y}^{\be,n}),(1+\be)^2\mbox{\rm Length}(C_{x,y}^{\be,n})].
\end{equation}

Note that $\mbox{\rm Length}(C_{x,y}^{\be,n})=D(G_n^\be(x),G_n^\be(y))$.
  An elementary calculation shows that
for any points $X,Y\in \oB(R)$, $D(X,Y)\leq 2\sqrt{2}R$. Hence, by
equation \eqref{c3}, we have the estimate

\begin{equation} \label{c4}
\mbox{\rm Length}(c_{x,y}^{\be,n})\leq (1+\be)^2\mbox{\rm Length}(C_{x,y}^{\be,n})
\leq(1+\be)^2 2(1+4\be)\sqrt{2}.
\end{equation}
Since as $n$ goes to infinity,  $\beta$ can be chosen arbitrarily
small, then for $n$ sufficiently large,
we can assume that $(1+\be)^2 2(1+4\be)\sqrt{2}< 3$.
With this choice of $\beta$,
$$\mbox{\rm Length}(c_{x,y}^{\be,n})< 3$$
and thus, the intrinsic diameter of $\cC_n^\be$ is less than 3 for $n$  sufficiently large.
Henceforth, we will  assume that
$\be$ is  chosen sufficiently small and $n$ is
 chosen sufficiently large  so that this inequality holds.

In the case  being considered where $\G_n$ is a single curve, after replacing
by a sequence, define $\cC_n:=\cC_n^\be$ and note that  since $\vec{0}\in C_n $,
$\cC_n \cap \B(1/n)\neq\mbox{\rm \O}$.  By construction,
$\partial \cC_n  \subset [\oB(1+2\be)-\oB(1+\frac23 \be)]\subset [\rth -\oB(1+\frac23 \be)]$.

It remains to prove that
for $n$ sufficiently large, every component
$\Delta_n $ of $M_n\cap \B(1)$, and its closure $\ov{\Delta}_n $, is a disk that is contained
 in $\cC_n$. We first prove that $\Delta_n$ is contained in $\cC_n$. Once this is proved,
the fact that $\Delta_n$ and its closure are disks with piecewise smooth
boundary follows by using the same  arguments as in the proof of item A of the theorem.

Arguing by contradiction, suppose that after choosing a subsequence,
for all $n\in \N$, there is a component
$\Delta_n $ of $M_n\cap \B(1)$ that is not contained in $\cC_n$.
Let $\widehat \Delta_n$ be the connected component of $M_n\cap \B(1+\frac12\be)$ containing $\Delta_n$.
By elementary separation property, using that $\cC_n$ is a disk disjoint
from $\Delta_n$ and with $\partial \cC_n \subset [\rth -\oB(1+\frac23 \be)]$,
it follows that $\widehat \Delta_n$ is also disjoint from $\cC_n$.
By the construction of $\cC_n$, Claim~\ref{two-components} implies that
given $\t_0\in [0,1)$, if $n$ is sufficiently large
then the set $(\bigcup_{\t\in [-\t_0,\t_0]}\g_n^\t) \cap \oB(1+\frac12\be)$
is contained in $ \cC_n$.
Since  $\widehat \Delta_n$ is disjoint from $\cC_n$, we conclude that
given $\ve>0$ there exists $N(\ve)\in\mathbb N$ such that if $n\geq N(\ve)$,
for any point $p\in\widehat \Delta_n$, $|\langle N(p), (0,0,1)\rangle|\geq 1-\ve$.
Since  $\widehat \Delta_n$ contains points in $\B(1)$ and has its boundary contained
in $ \partial \B(1+\frac12\beta)$, if $\ve$ is chosen sufficiently small
depending on $\be$,  then $\widehat\Delta_n$ must
intersect $\g(n)$ when $n$ is sufficiently large, contradicting embeddedness.
 This contradiction
 proves that the disk $\Delta_n$ must be contained in $\cC_n$, and completes the
 proof of the theorem in the special case that $\G_n$ has one component.


Suppose now that $\G_n$ has exactly two components for all $n$.
Let $\G_n\cap \oB(1+\be)=\g_1(n)\cup\g_2(n)$, with $\vec 0\in \g_1(n)$.
Since the arcs $\g_2(n)$
converge to the intersection of the $x_3$-axis with the ball
$\B(1+\be)$, after replacing by a subsequence,
$\g_2(n)\cap \B(\frac1n)\neq\mbox{\rm \O}$. As in the
just considered case, we can construct a
disk $\cC_n^\be(1)$ passing through the origin
and that contains  the ``axis" $\g_1(n)$ and the intrinsic diameter of $\cC_n^\be(1)$
is less than 3.
After choosing a possibly smaller
value of $\be$, one can also construct a
similarly defined ``ruled'' disk $\cC_n^\be(2)\subset \oB(1+2\be)$
with ``axis" $\g_2 (n)$
and the intrinsic diameter of  ${\cC}_n^\be(2)$ is also less than 3.
Note that in this second case the
disks ${\cC}_n^\be$(1), ${\cC}_n^\be(2)$ are also pairwise disjoint.
The proof that every component of $M_n\cap \B(1)$ is a disk whose
closure in $M_n$ is a piecewise smooth compact disk contained in
$\cC_n={\cC}_n^\be(1) \cup{\cC}_n^\be(2)$ is the same as the proof when
$\G_n$ is a single curve.
This last observation completes the proof of Theorem~\ref{thm2.1}
\end{proof}

\section{Applications of the main theorem}

\subsection{Chord-arc Property}
In this section we prove the chord arc result stated in
Theorem~\ref{main2}. Before doing so, we
need to recall a result from~\cite{mt8}. In~\cite{mt8}, we
applied the one-sided curvature estimate in Theorem~\ref{th}
to prove a relation between intrinsic
and extrinsic distances in an $H$-disk, which can be viewed as a
{\em weak chord arc} property.
This result was motivated by and generalizes a previous result,
Proposition~1.1 in~\cite{cm35}, by
Colding-Minicozzi for 0-disks.
 We begin by making the following definition.

\begin{definition} Given a point $p$ on a surface
$\Sigma\subset \rth$, $\S (p,R)$ denotes the closure of
 the component of $\Sigma \cap {\B}(p,R)$ passing through $p$.
\end{definition}

\begin{theorem}[Weak chord arc property, Theorem 1.2 in~\cite{mt8}] \label{thm1.1}
There exists a $\delta_1 \in (0,
\frac{1}{2})$  such that the following holds.

Let $\S$ be an   $H$-disk in $\rth.$  Then for all
intrinsic closed balls $\ov{B}_\S(x,R)$ in $\S-
\partial \S$:

\ben \item $\S (x,\delta_1 R)$ is a disk with piecewise smooth boundary
$\partial \Sigma(\vec{0},\delta_1 R)\subset \partial \B(\de_1R)$. \item
$
 \S (x, \delta_1 R) \subset B_\S (x, \frac{R}{2}).$
\een
\end{theorem}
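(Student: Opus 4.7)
My plan is to argue by contradiction using the limit lamination theorem (Theorem~\ref{thm2.1}). Suppose no such $\delta_1$ exists; then for each $n$ I can find an $H_n$-disk $\Sigma_n$, a point $x_n\in\Sigma_n$, a radius $R_n>0$ with $\overline{B}_{\Sigma_n}(x_n,R_n)\subset\Sigma_n-\partial\Sigma_n$, and a number $\delta_n\in(0,\tfrac12)$ with $\delta_n\to 0$ such that at least one of the conclusions (1), (2) (with $\delta_1$ replaced by $\delta_n$) fails. I translate $x_n$ to the origin and homothetically rescale by $1/(\delta_n R_n)$, so that in the new picture $\delta_n R_n=1$ and the closed intrinsic ball $\overline{B}_{\Sigma_n}(\vec{0},1/\delta_n)\subset\Sigma_n-\partial\Sigma_n$ has radius $1/\delta_n\to\infty$. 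If $|A_{\Sigma_n}|(\vec 0)\to 0$ along a subsequence, the interior curvature estimates (Theorem~\ref{ext:cest}) and a standard compactness argument give $C^k$ graphical convergence of the $\Sigma_n$ near $\vec{0}$ to a flat plane, whence (1) and (2) hold for large $n$, a contradiction. After passing to a subsequence I may therefore assume $|A_{\Sigma_n}|(\vec 0)\geq\varepsilon>0$.

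To invoke Theorem~\ref{thm2.1} I still need $\partial\Sigma_n\subset\rth-\B(n)$. This extrinsic separation is obtained from the intrinsic separation $1/\delta_n$ by combining the extrinsic radius estimate (Theorem~\ref{rest}) with the interior curvature estimates (Theorem~\ref{ext:cest}): these two results preclude the $H_n$-disks from folding so severely that $\partial\Sigma_n$ stays extrinsically close to $\vec{0}$ while being intrinsically at distance $1/\delta_n$ from it. With this in hand, Theorem~\ref{thm2.1} applies. In Case~A, the $\Sigma_n$ converge (multiplicity one or two) to a helicoid through $\vec{0}$ and the component of $\Sigma_n\cap\B(1)$ through $\vec{0}$---that is, $\Sigma_n(\vec 0,1)$ in the rescaled picture---is an open disk whose closure is a compact disk with piecewise smooth boundary in $\partial\B(1)$ and intrinsic diameter less than $10$. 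In Case~B the same component is a compact disk with piecewise smooth boundary in $\partial\B(1)$ of intrinsic diameter less than $3$. In both cases conclusion (1) is immediate and, since the intrinsic diameter is bounded by $10$ whereas $1/(2\delta_n)\to\infty$, conclusion (2) also holds for $n$ large. This contradicts the failure of (1) or (2) and proves the existence of $\delta_1$.

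The main obstacle is the preliminary extrinsic separation step: turning the hypothesis that the large intrinsic ball lies in $\Sigma_n-\partial\Sigma_n$ into the extrinsic statement $\partial\Sigma_n\subset\rth-\B(n)$ needed to feed into Theorem~\ref{thm2.1}. This amounts to ruling out $H$-disks whose intrinsic distance to the boundary vastly exceeds the extrinsic distance, and I would expect it to follow from an iteration argument built on the interior $|A|$-bounds of Theorem~\ref{ext:cest} inside the intrinsic ball, perhaps combined with a monotonicity-type estimate to control the rate at which a constant mean curvature disk can wind in a fixed extrinsic ball.
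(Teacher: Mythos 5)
This statement is not proved in the paper at all: it is quoted verbatim from \cite{mt8} (where it is established using the one-sided curvature estimate of Theorem~\ref{th} and the structure theory of $H$-disks, generalizing Colding--Minicozzi's Proposition~1.1 in \cite{cm35}), so there is no internal proof to compare with; your attempt must therefore stand on its own, and it has a genuine gap at exactly the decisive step. After rescaling, your hypothesis is purely intrinsic ($\ov{B}_{\S_n}(\vec 0,1/\de_n)\subset \S_n-\partial\S_n$), while Theorem~\ref{thm2.1} needs the extrinsic separation $\partial\S_n\subset\rth-\B(n)$. Converting the former into the latter is not a ``preliminary step'': it is essentially the content of conclusion~(2) itself, i.e.\ the statement that an $H$-disk cannot wind around inside a small extrinsic ball while keeping its boundary intrinsically far away. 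The tools you invoke cannot deliver it. Theorem~\ref{rest} bounds the extrinsic distance to the boundary from \emph{above} by $\mathcal R_0/H$, which is the wrong direction (it can only force $H_n\to 0$ if the boundary happens to be far, not force the boundary to be far). Theorem~\ref{ext:cest} gives curvature bounds only at points whose \emph{extrinsic} distance to $\partial\cD$ is at least $\de$ and with a constant $K_0(\de,\cH)$ that blows up as $\cH\to 0$; since in your setting $H_n\to 0$ and you have no extrinsic separation yet, it gives nothing near the origin and cannot be ``iterated'' intrinsically. Indeed, in the present paper the weak chord arc property is used in the proof of Theorem~\ref{main3} precisely to supply this intrinsic-to-extrinsic conversion before Theorem~\ref{thm2.1} can be applied, so deriving Theorem~\ref{thm1.1} from Theorem~\ref{thm2.1} along your lines begs the question unless you supply an independent argument for this step --- and that independent argument (carried out in \cite{mt8}) rests on the one-sided curvature estimate and the multi-valued graph description, not on a monotonicity or iteration of interior $|A|$-bounds.

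A secondary problem: in your first case, $|A_{\S_n}|(\vec 0)\to 0$ does not yield conclusions (1) and (2) for large $n$. Smallness of $|A|$ at the single point $\vec 0$ gives no control on the rest of the component $\S_n(\vec 0,1)$, which may contain sheets entering $\B(1)$ from far away intrinsically; and the curvature estimate you cite to upgrade to graphical convergence again presupposes extrinsic distance to $\partial\S_n$ and a positive lower bound on $H$, neither of which is available. So both branches of your dichotomy rely on the unproved extrinsic separation, and the proposal as written does not close.
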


 Theorem~\ref{main2}
is   equation~\eqref{eqChord2}  of this  result.

\begin{theorem} \label{main3}
Given $\ve>0$, there exists a $C_\ve>1$ so that the following holds.
Suppose that $\S$ is an $H$-disk, $ \vec{0}\in \S$ and $R>r_0>0$.
If  ${B}_\S(\vec 0,C_\ve R)\subset \S-\partial \S$ and
$\sup_{B_\S(\vec 0,(1-\frac{\sqrt2}2)r_0)}|A_\S|\geq r_0^{-1}$,
then
\begin{equation}  \label{eqChord}
\mbox{\rm dist}_\S (x,\vec{0})< (1+\ve)({\sqrt{2}}\,|x|+2 r_0), \;
\mbox{\rm for } x\in B_\S(\vec 0,R).
\end{equation}
In particular, for $\ve$ chosen sufficiently small, there exists a $C_\ve$  so that
\begin{equation}  \label{eqChord2} \frac13 \mbox{\rm dist}_\S (x,\vec{0})< |x|/2+r_0, \;
\mbox{\rm for } x\in B_\S(\vec 0,R).\end{equation}
\end{theorem}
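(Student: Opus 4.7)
The plan is to argue by contradiction, combining Theorem~\ref{thm2.1} with the sharp distance estimates from the metric $D$ of Definition~\ref{d}. Suppose that for some $\ve>0$ no choice of $C_\ve$ makes \eqref{eqChord} hold. One then produces a sequence of $H_n$-disks $\Sigma_n$ with $\vec 0\in\Sigma_n$, numbers $R_n>r_0^n>0$ with $B_{\Sigma_n}(\vec 0,nR_n)\subset\Sigma_n-\partial\Sigma_n$ and the assumed curvature lower bound at scale $r_0^n$, and points $x_n\in B_{\Sigma_n}(\vec 0,R_n)$ violating the conclusion. Since every condition is invariant under the rescaling $(\Sigma,r_0,R)\mapsto(\lambda\Sigma,\lambda r_0,\lambda R)$, I rescale by $1/|x_n|$ to normalize $|x_n|=1$, set $\tilde r_n=r_0^n/|x_n|$, and pass to a subsequence with $\tilde r_n\to\tilde r\in[0,\infty]$. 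The case $\tilde r=\infty$ is handled immediately by the extrinsic radius estimate Theorem~\ref{rest}; henceforth I assume $\tilde r_n$ is bounded. Theorem~\ref{thm1.1} upgrades the intrinsic hypothesis to $\partial\Sigma_n\subset\rth-\B(2\delta_1 n)$, and Theorem~\ref{rest} then forces $H_n\to 0$. Picking $y_n\in\Sigma_n$ with $|y_n|\le(1-\sqrt 2/2)\tilde r_n$ and $|A|(y_n)\ge 1/\tilde r_n$ and translating by $-y_n$, the translated sequence satisfies the hypotheses of Theorem~\ref{thm2.1} with a fixed positive constant playing the role of $\ve$ there.

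After a further subsequence, exactly one of Case A or Case B of Theorem~\ref{thm2.1} occurs. In Case A, smooth multiplicity-one-or-two convergence to a helicoid $M_\infty$ together with an elementary computation on $M_\infty$ (using that its axis is a geodesic contained in $M_\infty$ and the inequality $r+\sqrt{1-r^2}\le\sqrt 2$) yields $\mathrm{dist}_{\Sigma_n}(\vec 0,x_n)\le\sqrt 2+O(\tilde r)+o(1)$, which contradicts the assumed lower bound for large $n$. In Case B, I invoke the internal construction from the proof of Theorem~\ref{thm2.1}: for any $\be\in(0,1/3)$ one has compact piecewise smooth disks $\cC_n^\be\subset\B(1+2\be)$ containing both $\vec 0$ and $x_n$, a diffeomorphism $G_n^\be\colon\cC_n^\be\to\wh F_n^\be$ moving points by at most $C\be$, and, by equations \eqref{c1}--\eqref{c3}, the length bound $\mathrm{Length}(c_{x,y}^{\be,n})\le(1+\be)^2\, D(G_n^\be(x),G_n^\be(y))$. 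A direct Cauchy--Schwarz calculation from Definition~\ref{d} gives $D(X,\vec 0)\le\sqrt 2\,|X|$, and since the singular axis of the limit foliation passes within extrinsic distance $O(\tilde r_n)$ of $\vec 0$, one obtains $D(G_n^\be(\vec 0),G_n^\be(x_n))\le\sqrt 2\,|x_n|+O(\be+\tilde r_n)$; combining and choosing $\be$ sufficiently small contradicts the assumed lower bound $(1+\ve)(\sqrt 2+2\tilde r_n)$.

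The main obstacle is the Case B analysis: the sharp leading constants $\sqrt 2$ in front of $|x|$ and $2$ in front of $r_0$ arise precisely from the elementary inequality $D(X,\vec 0)\le\sqrt 2|X|$ (attained at $45^\circ$ from the vertical axis) together with the bound $|y_n|\le(1-\sqrt 2/2)\tilde r_n$ on the displacement of the singular axis from $\vec 0$. One must carefully track how the $\be$-distortion of $G_n^\be$, the shift of the axis by $y_n$, and the Euclidean triangle inequality interact so that both of these sharp constants survive in the limit. Finally, equation \eqref{eqChord2} follows from \eqref{eqChord} by choosing $\ve>0$ small enough that $(1+\ve)\sqrt 2\le 3/2$ and $2(1+\ve)\le 3$.
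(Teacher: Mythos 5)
Your overall strategy (contradiction, rescaling, invoking Theorem~\ref{thm2.1} and the internal ruled-disk/$D$-metric machinery from its Case~B proof) is the same general route as the paper, but there is a genuine gap at the precise point where the sharp constants must be produced. In your Case~B you assert that ``the singular axis of the limit foliation passes within extrinsic distance $O(\tilde r_n)$ of $\vec 0$,'' and in Case~A you bound the helicoid contribution by ``$O(\tilde r)$.'' Neither assertion is justified, and, more importantly, an unspecified constant in front of $r_0$ cannot close the argument: by Remark~\ref{sharp} the constants $\sqrt 2$ and $2$ in \eqref{eqChord} are attained (asymptotically) on helicoids, so when $\tilde r=\lim r_0^n/|x_n|$ is bounded away from $0$ an upper bound of the form $\sqrt 2\,|x_n|+C\tilde r_n+O(\be)$ with $C$ unspecified does not contradict the assumed lower bound $(1+\ve)(\sqrt 2\,|x_n|+2\tilde r_n)$. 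The hypothesis only supplies a point $y_n$ with $|A|(y_n)\ge 1/\tilde r_n$ at intrinsic distance at most $(1-\frac{\sqrt 2}{2})\tilde r_n$ from $\vec 0$; it does not by itself say how far $\vec 0$ can sit from the axis curve $\g(n)$ (equivalently, how large $|Q_n|=|G_n^\be(q_n)|$ can be) relative to $\tilde r_n$. Establishing exactly this is the content of the paper's Claim~\ref{distance}: one rescales by the horizontal distance to the axis, obtains convergence to a vertical helicoid, and uses the sharp geometric fact that on a vertical helicoid normalized to have maximal curvature at the axis, points at distance greater than $\frac{\sqrt 2}{2}$ from the axis have $|A|<1$; this is where the radius $(1-\frac{\sqrt 2}{2})r_0$ in the hypothesis is consumed and where the factor $2r_0$ (via $D\le 2|Q_n|+\sqrt2\,|Q_n-Z_n|$) comes from. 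Your proposal has no substitute for this step, so the contradiction cannot be completed for $\tilde r\in(0,\infty]$; the same sharpness problem afflicts your Case~A, which your normalization ($|x_n|=1$, bounded curvature scale) makes unavoidable, whereas the paper rescales by $1/(nR_n)$ so that $r_n<1/n$ and the violating point lies in $B_{\S_n}(\vec 0,1/n)$, forcing Case~B and eliminating the helicoid limit as a separate case.

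Two secondary points. First, your dismissal of the case $\tilde r=\infty$ ``immediately by Theorem~\ref{rest}'' does not work: the radius estimate bounds extrinsic distance to the boundary by $\mathcal R_0/H$, which gives nothing here since $H_n\to 0$; and the weak chord-arc theorem only controls the component of $\S_n\cap\B(|x_n|)$ through $\vec 0$, which need not contain $x_n$, so this case also needs a real argument (in the paper it never arises because of the chosen normalization). Second, even granting the axis estimate, one must verify that the comparison points $Q_n=G_n^\be(q_n)$, $Z_n=G_n^\be(z_n)$ converge, after the appropriate rescaling, to the same limits as $q_n,z_n$ before the inequality with the positive gap $2\rho r_n$ can be contradicted; in the paper this requires the three-case analysis according to $\lim r_n/|z_n|\in\{0\}\cup(0,\infty)\cup\{\infty\}$, which your sketch does not address (your normalization softens but does not remove the issue, since the $\be$-distortion and the translation by $p_n$ must still be shown not to eat the gap).
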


\begin{remark} \label{sharp}
The constants in equation~\eqref{eqChord} are sharp  in the following sense.
For every $k\in \N$,  there exists
a helicoid $\S$ with $\vec{0}\in \S$ satisfying the following properties: \bit 
\item
$\sup_{B_{\S}(\vec 0,1-\frac{\sqrt2}2)}|A_{\S}|= 1,$
\item  there exists  points
$ x,y\in\S$, such that
$$ 
\mbox{\rm dist}_\S (x,\vec{0})> (\sqrt{2}-\frac1k)|x|,$$ $$
\mbox{\rm dist}_\S (y,\vec{0})> 2-\frac1k .
$$
\eit

\end{remark}

\begin{proof}
Arguing by contradiction, suppose there exists $\delta>0$ such that
for any $n\in\mathbb N$ the following holds. There exist
an $H_n$-disk $\S_n$ with $ \vec{0}\in \S_n$, and $R_n>r_n>0$,
such that ${B}_{\S_n}(\vec 0,n^2R_n)\subset \S_n-\partial \S_n$,
$\sup_{B_{\S_n}(\vec 0,(1-\frac{\sqrt2}2)r_n)}|A_{\S_n}|\geq r_n^{-1}$, and

\[
\mbox{\rm dist}_{\S_n} (x_n,\vec{0})\geq (1+\delta)({\sqrt{2}}\,|x_n|+2 r_n)
\]
for some  $x_n\in B_{\S_n}(\vec 0,R_n).$

Abusing the notation, let $ \S_n$ denote the rescaled surfaces
$\frac{1}{nR_n} \S_n$ and let ${r}_n$ denote $\frac{{r}_n}{nR_n}$.
Then  for the new sequence of disks $\S_n$ the following holds.
The intrinsic ball $  B_{ \S_n}(\vec 0,n)$  is
contained in $ \S_n-\partial \S_n$,  $\vec{0}\in  \S_n$,
\begin{equation}\label{blowup}
\sup_{B_{ \S_n}(\vec 0, (1-\frac{\sqrt2}2){r}_n) }|A_{ \S_n}|\geq{r}_n^{-1},
\end{equation}
where $ r_n<\frac 1n$ and

\begin{equation}\label{contra2}
\mbox{\rm dist}_{\S_n} (y_n,\vec{0})\geq (1+\delta)({\sqrt{2}}\,|y_n|+2 r_n)
\end{equation}
 for some $y_n\in B_{\S_n}(\vec 0,1/n).$

Theorem~\ref{thm1.1} implies
that the surfaces $\wt\S_n= \S_n(\vec 0,\de_1 n)$
are disks with piecewise smooth boundary
$\partial \wt \S_n\subset \partial \B(\de_1n)$ and
therefore,
after choosing a subsequence, we can assume
that
$\partial\wt \S_n \subset [\rth - \B(n)]$.

Since by equation~\eqref{blowup},
$\sup_{B_{\wt \S_n}(\vec 0,(1-\frac{\sqrt2}2)r_n) }|A_{ \wt \S_n}|\geq r_n^{-1}\geq n,$
we can apply  the arguments in the proof of Case B of
Theorem~\ref{thm2.1} to the sequence of constant mean curvature
disks $\wt \S_n$. Once again, abusing the notation, we let $\S_n$ denote $\wt \S_n$.
Theorem~\ref{thm2.1} implies that,
after choosing a subsequence, there exists a straight
line $\cS$ going through the origin, such that the sequence of surfaces $\S_n$
converges to a minimal foliation of
$\rth-\cS$ by planes. Without loss of generality, we will assume these
planes are horizontal and $\cS$ is the $x_3$-axis.

By the proof of part B of Theorem~\ref{thm2.1},
there is a set $\G_n\subset \S_n\cap \B(3)$ that consists of
one or two analytic arcs along which the disk $\S_n$ has vertical
tangent planes. These curves converge $C^1$ to the
intersection of the $x_3$-axis with $ \B(3)$. Finally there exists
a disk $\cD^\be_n$ component of a set $\cC_n^\be$ that
contains the intrinsic ball
$\ov{B}_{\S_n}(\vec{0},1/n)$ and contains a component
$\g(n) $ of $\G_n\cap \cC_n^\be \subset \B(1+2\be)$, $\be \in (1,\frac13)$. Recall that
by taking $n$ sufficiently large, $\beta$ can be taken arbitrarily small.

In order for it to be easier to
apply the arguments in the proof  of part B of Theorem~\ref{thm2.1},
we will soon translate  the surfaces $\S_n$ so that the translations of
the curves $\g(n)$ contain  the origin.

First recall from the proof of  part B of Theorem~\ref{thm2.1}
that we can
view $\cD^\be_n$ to be ruled by curves $\a_t^n$, each of which intersects
$\g(n)$ in a single point with $x_3$-coordinate equal to $t$.
Let $p_n\in \g(n)$ be the point such that $\vec 0\in \a_{x_3(p_n)}^n$
and consider the translated surfaces $\wt\Sigma_n=\Sigma_n-p_n$ so that $p_n$
is sent to the origin, the origin is sent to $q_n:=-p_n$ and $y_n$ is sent
to $z_n:=y_n-p_n$. Abusing the notation, we use the same notation for the
translation of $\g(n)$ and $\cD^\be_n$. Let $G_n^\be$ be the map as
defined in equation~\eqref{mapg} of the previous section, but restricted
to $\cD^\be_n\cap \B(\frac 12)$. Note that
since $q_n\in \a_0^n$, $x_3(G_n^\be(q_n))=0$. Let $Q_n=G_n^\be(q_n)$ and $Z_n=G_n^\be(z_n)$.
In this new setting, equation~\eqref{contra2} becomes
\begin{equation}\label{contra3}
 \mbox{\rm dist}_{\wt{\S}_n} (z_n,q_n)\geq (1+\delta)({\sqrt{2}} |z_n-q_n|+2r_n).
\end{equation}
We will obtain a contradiction by estimating
$\mbox{\rm dist}_{\wt{\S}_n} (z_n,q_n)$ from above.

 Recall from the previous
section the definitions of $c_{z_n,q_n}^{\be,n}$ and $C_{z_n,q_n}^{\be,n}$,
that
\[
\mbox{\rm dist}_{\wt{\S}_n} (z_n,q_n)\leq \mbox{\rm Length}(c_{z_n,q_n}^{\be,n})\leq (1+\be)^2\mbox{\rm Length}(C_{z_n,q_n}^{\be,n}),
\]
  and that by taking $n$ sufficiently large, $\beta$ can be taken arbitrarily small.
This, together with equation~\eqref{contra3} gives that
\begin{equation*}
(1+\delta)({\sqrt{2}} |z_n-q_n|+2r_n)\leq (1+\beta)^2\mbox{\rm Length}(C_{z_n,q_n}^{\be,n})\leq (1+\delta)^\frac12\mbox{\rm Length}(C_{z_n,q_n}^{\be,n}),
\end{equation*}
if $(1+\beta)^2\leq (1+\delta)^\frac12$. In other words,
\begin{equation}\label{boundlength}
(1+\delta)^\frac12({\sqrt{2}} |z_n-q_n|+2r_n)\leq \mbox{\rm Length}(C_{z_n,q_n}^{\be,n}),
\end{equation}

By working with the distance $D$ in Definition~\ref{d}, it can be shown that if
there exists $t>0$ such that $\mbox{\rm Length}(C_{z_n,q_n}^{\be,n})=t+2|Q_n| $,
  then $|Q_n-Z_n|\geq \frac{\sqrt 2}2 t$,
see Figure~\ref{Dball}. Note that since $x_3(Q_n)=0$, then $|Q_n|$ is equal
to the distance from $Q_n$ to the $x_3$-axis.
In order to prove that such a positive $t$ exists, it will suffice to show that
for $n$ large, $\mbox{\rm Length}(C_{z_n,q_n}^{\be,n})-2|Q_n| >0$. For the
time being, let us assume that such a positive $t$ exists. Then, by the previous discussion, $t\leq \sqrt 2|Q_n-Z_n|$ and thus
\[
\mbox{\rm Length}(C_{z_n,q_n}^{\be,n})=t+2|Q_n|  \leq \sqrt 2 |Q_n-Z_n|+2|Q_n|.
\]
This, together with inequality~\eqref{boundlength}, implies that
\[
 (1+\de)^\frac12({\sqrt{2}}\,|z_n-q_n|+2 r_n)\leq \sqrt 2 |Q_n-Z_n|+2|Q_n|.
\]


\begin{claim}\label{distance}
Given $\mu>0$ there exists $N(\mu)>0$ such that for any $n\geq N(\mu)$, $|Q_n|\leq  (1+ \mu)r_n$.
\end{claim}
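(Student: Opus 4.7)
The plan is to argue by contradiction: assume that along a subsequence $|Q_n|/r_n\geq 1+\mu_0$ for some fixed $\mu_0>0$. The first step will be to identify $|Q_n|$ geometrically. Chasing through the definitions of $\Pi_n$ and $I_n$ shows that, because $q_n\in\alpha_0^n$ (from the choice of $p_n$ and the translation by $-p_n$) and the translated $\gamma(n)(0)=\vec{0}$, one has $Q_n=(-x_1(p_n),-x_2(p_n),0)$. Thus $|Q_n|$ equals $\rho_n$, the horizontal distance from $p_n$ to the $x_3$-axis, equivalently the horizontal distance in $\rth$ between the original origin of $\Sigma_n$ and the vertical line through $p_n$. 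The contradiction hypothesis therefore reads $\rho_n\geq(1+\mu_0)r_n$.

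The key tool will be the helicoidal approximation at $p_n$ supplied by Claim~\ref{convhel} (a consequence of Theorem~\ref{mainextension}). Set $\lambda_n:=|A_{\Sigma_n}|(p_n)$ and fix $\Lambda>0$ large and $\varepsilon>0$ small: for $n$ large, Claim~\ref{convhel} produces a rotation $T_n$ about a vertical axis such that $\lambda_n(\Sigma_n-p_n)\cap\B(\Lambda)$ is a $C^2$-graph with $\|u\|_{C^2}\leq\varepsilon$ over the standard vertical helicoid $\mathcal{H}$ with $|A_{\mathcal{H}}|=1$ along its axis. Equivalently, near $p_n$ the surface $\Sigma_n$ is modeled by a helicoid whose axis is the vertical line through $p_n$. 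The hypothesis of Theorem~\ref{main3} provides $w_n\in\Sigma_n$ with $d_{\Sigma_n}(\vec{0},w_n)\leq(1-\tfrac{\sqrt{2}}{2})r_n$ and $|A_{\Sigma_n}|(w_n)\geq 1/r_n$. Since the maximum of $|A|$ on $\mathcal{H}$ is $1$, transferring through the $C^2$-approximation forces $L_n:=\lambda_nr_n\geq 1-O(\varepsilon)$.

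The core estimate will combine two ingredients. First, on $\mathcal{H}$ the curvature at horizontal distance $v$ from the axis is $2/(2+v^2)$; transferring via the $C^2$-approximation, the bound $|A_{\Sigma_n}|(w_n)\geq 1/r_n$ forces
\[
v(w_n)\leq \frac{\sqrt{2(L_n-1)}}{\lambda_n}+O\!\left(\frac{\varepsilon}{\lambda_n}\right),
\]
where $v(\cdot)$ denotes horizontal distance to the vertical line through $p_n$. Second, since $v$ is $1$-Lipschitz on $\rth$ and extrinsic distance is bounded by intrinsic distance on $\Sigma_n$,
\[
|\rho_n-v(w_n)|=|v(\vec{0})-v(w_n)|\leq |\vec{0}-w_n|\leq d_{\Sigma_n}(\vec{0},w_n)\leq \Bigl(1-\tfrac{\sqrt{2}}{2}\Bigr)r_n.
\]
Combining and dividing by $r_n$ yields $\rho_n/r_n\leq \sqrt{2(L_n-1)}/L_n+(1-\tfrac{\sqrt{2}}{2})+o(1)$. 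A short calculus check shows that the function $f(L)=\sqrt{2(L-1)}/L+(1-\tfrac{\sqrt{2}}{2})$ is maximized on $[1,\infty)$ at $L=2$, with $f(2)=1$, and $f(L)\to 1-\tfrac{\sqrt{2}}{2}<1$ as $L\to\infty$. Hence $\limsup_n\rho_n/r_n\leq 1$, which for $\varepsilon$ small and $n$ large contradicts $\rho_n/r_n\geq 1+\mu_0$.

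The principal technical obstacle will be the subcase $L_n\to\infty$: the rescaled intrinsic ball of radius $(1-\tfrac{\sqrt{2}}{2})L_n$ containing the rescaled image of $w_n$ cannot be confined to any fixed $\B(\Lambda)$ where Claim~\ref{convhel} is guaranteed to apply. Two remedies are available. One is a diagonal argument allowing $\Lambda=\Lambda_n\to\infty$ in the application of Claim~\ref{convhel}. The other, more efficient, is to note that in this regime the leaves of the limit foliation of $\rth$ by horizontal planes serve as reference surfaces in Corollary~\ref{cest2}, yielding the direct bound $v(w_n)=O(\sqrt{r_n/\lambda_n})=o(r_n)$; the same Lipschitz inequality then gives $\rho_n/r_n\leq 1-\tfrac{\sqrt{2}}{2}+o(1)$, even stronger than required.
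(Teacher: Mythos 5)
Your strategy (model $\Sigma_n$ near $p_n$ by a helicoid via Claim~\ref{convhel}, bound the horizontal distance $v(w_n)$ of the high-curvature point to the model's axis using the profile $2/(2+v^2)$, and combine with the $1$-Lipschitz function ``distance to the vertical line through $p_n$'' and the optimization of $\sqrt{2(L-1)}/L$) is genuinely different from the paper's, and the optimization correctly identifies the same sharp geometric fact the paper uses (on any vertical helicoid, points at distance greater than $\frac{\sqrt2}{2}$ from the axis have $|A|<1$). However, there is a genuine gap at the crucial step: Claim~\ref{convhel} only supplies the helicoidal approximation in the ball of extrinsic radius $\Lambda/\lambda_n$ about $p_n$, whereas the points you need to see --- the original origin and $w_n$ --- lie at extrinsic distance roughly $\rho_n=|Q_n|$ from $p_n$, and under your contradiction hypothesis you only have a \emph{lower} bound $\rho_n\geq(1+\mu_0)r_n$, no upper bound relating $\rho_n$ (or $r_n$) to $1/\lambda_n$. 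So both the inference ``$L_n=\lambda_n r_n\geq 1-O(\ve)$'' and the core estimate for $v(w_n)$ are only justified when $\lambda_n\rho_n$ stays bounded, which is essentially what the claim is trying to prove; note also that the problematic regime is not only $L_n\to\infty$ --- e.g.\ $L_n$ bounded with $\rho_n\gg r_n$ (so $\lambda_n\rho_n\to\infty$) is equally outside the reach of a fixed $\Lambda$, and your case analysis, organized around $L_n$ alone, does not cover it.

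Neither proposed remedy closes this. The diagonal argument does give some $\Lambda_n\to\infty$, but its rate is dictated by the sequence and cannot be arranged to dominate $\lambda_n\rho_n$ or $L_n$. The fallback via Corollary~\ref{cest2} does not yield $v(w_n)=O(\sqrt{r_n/\lambda_n})$: that square-root decay is the helicoid profile itself, which is exactly what is unavailable outside $\B(p_n,\Lambda/\lambda_n)$; the one-sided curvature estimate only gives $|A|\leq C/\mathrm{dist}$ with an uncontrolled universal constant $C$, hence at best $v(w_n)\leq C r_n$, which proves $|Q_n|\leq C' r_n$ but not the sharp statement $|Q_n|\leq(1+\mu)r_n$ for every $\mu>0$ that the claim (and the sharp constants of Theorem~\ref{main3}) requires. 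The paper sidesteps all of this by rescaling $\wt\Sigma_n$ by $1/|Q_n|$, invoking the dichotomy of Theorem~\ref{thm2.1} (and Claim~\ref{twocomponents}) to get either a foliation or a limit vertical helicoid, ruling out the foliation because the curvature condition persists at $q_n'$ at distance $1$ from the axis, and then contradicting the scaling-invariant bound $|A|<1$ at distance greater than $\frac{\sqrt2}{2}$ from the axis of any vertical helicoid; the compactness step is what removes the need to control the scale on which the helicoidal picture is valid. To repair your argument you would need either such a compactness/rescaling step or an a priori bound of the form $\lambda_n\rho_n\leq\Lambda$, and the latter is not available.
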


\begin{proof}
 Arguing by contradiction, assume that there exists $\mu>0$ such that
 $  |Q_n|\geq (1+\mu)  r_n$. By the arguments in Section~\ref{hlam},
\begin{equation} \label{EQ13}
 \mbox{\rm dist}_{\wt{\S}_n} (\vec 0,q_n)\leq (1+\beta)|Q_n|\leq 2|Q_n|\;\text{
and } \sup_{B_{\wt\S_n}( q_n,(1-\frac{\sqrt2}2)r_n)}|A_{\wt\S_n}|\geq r_n^{-1}.
\end{equation}
Consider the sequence of rescaled surfaces $\S_n'$ given by
$\frac{1}{ |Q_n|}\wt \S_n$ and let $q_n'=\frac{1}{ |Q_n| }q_n$, $Q_n'=\frac{1}{ |Q_n| }Q_n$. Note that
$\mbox{\rm dist}_{ {\S}'_n} (\vec 0,q'_n)\leq 2$.
Since $Q_n'$ is the projection of $q_n'$ to the $(x_1,x_2)$-plane, the
 Euclidean distance from $q'_n$ to the $x_3$-axis is  equal to $1$ while the
 distance from $q'_n$ to the $(x_1,x_2)$-plane is bounded.  Note also that
 \[
 \sup_{B_{\S'_n}( q'_n,(1-\frac{\sqrt2}2)\frac{r_n}{|Q_n|})}|A_{\S'_n}|\geq \frac{|Q_n|}{r_n}\geq 1+\mu
 \] and the tangent
 plane to $\S_n'$ at $\vec 0$ is vertical.

By Theorem~\ref{thm2.1},
see also the arguments in Claim~\ref{twocomponents}, after going to a subsequence, $\S_n'$   converges either to a
vertical helicoid containing the $x_3$-axis or to a foliation of $\rth$ minus the
$x_3$-axis by horizontal planes. Since  $\sup_{B_{\S'_n}( q'_n,(1-\frac{\sqrt2}2)\frac{r_n}{|Q_n|})}|A_{\S'_n}|\geq 1+\mu$,
 $\frac{r_n}{|Q_n|}\leq \frac1{1+\mu}<1$, and the
 Euclidean distance from $q'_n$ to the $x_3$-axis is  equal to $1$, $\S_n'$ must converge  to a vertical helicoid $\cH$ containing the $x_3$-axis. Let
 \[
 \lim_{n\to\infty}  \frac{|Q_n|}{r_n}=: T\in[1+\mu, \infty).
 \]

By the previous discussion and the definitions of $q'_n$ and $Q_n'$, $\lim_{n\to\infty} q'_n$ exists and it is equal to $Q:=\lim_{n\to\infty} Q'_n$.
Note that
\[
\sup_{B_{\cH}  (Q,(1-\frac{\sqrt2}2)\slash T)}|A_{\cH}|\geq T\geq 1+\mu,
\]
$|Q|=1$ and $x_3(Q)=0$. Thus there exists a point $P\in \ov{B}_{\cH}(Q,(1-\frac{\sqrt2}2)\slash T)$ with
$|A_\cH|(P)\geq T>1$ with its distance from the $x_3$-axis at least $1-(1-\frac{\sqrt2}2)\slash T>  \frac{\sqrt2}2$.
This contradicts the geometric property that on  a vertical helicoid containing the $x_3$-axis, points of distance greater than $\frac{\sqrt2}2$ from the $x_3$-axis must have norm of the second fundamental form less than 1. This  finishes the proof of the claim. \end{proof}

By taking $\mu$ such that $1+\mu \leq (1+\de)^\frac12-\rho$, for a fixed $\rho\in (0,(1+\de)^\frac12-1)$ we have obtained that for $n$ sufficiently large
\[
 (1+\de)^\frac12({\sqrt{2}}\,|z_n-q_n|+2 r_n)\leq \sqrt 2 |Q_n-Z_n|+2|Q_n|
\]
\[
\leq \sqrt 2 |Q_n-Z_n|+2((1+\de)^\frac12-\rho)r_n.
\]
In other words,
\begin{equation} \label{EQ14}
 (1+\de)^\frac12{\sqrt{2}}\,|z_n-q_n|\leq \sqrt 2 |Q_n-Z_n|-2\rho r_n.
\end{equation}

Next we use Claim~\ref{distance} to prove that
$\mbox{\rm Length}(C_{z_n,q_n}^{\be,n})-2|Q_n| >0$. By equation~\eqref{boundlength},
\[
(1+\delta)^\frac12 2r_n\leq  \mbox{\rm Length}(C_{z_n,q_n}^{\be,n}).
\]
Therefore, using this, Claim~\ref{distance} and our choice of $\mu$ we have that if $n$ is sufficiently large,
\[
\mbox{\rm Length}(C_{z_n,q_n}^{\be,n})-2|Q_n| \geq (1+\delta)^\frac12 2r_n-(1+\mu)2r_n\geq 2\rho r_n>0.
\]

In order to finish the proof, we argue similarly to the proof of Claim~\ref{distance}.
After passing to a subsequence, we can assume that one of the following three cases holds.

\begin{enumerate}
\item $\lim_{n\to\infty}\frac{r_n}{|z_n|}=L\in (0, \infty)$;
\item $\lim_{n\to\infty}\frac{r_n}{|z_n|}=0$;
\item $\lim_{n\to\infty}\frac{r_n}{|z_n|}=\infty$.
\end{enumerate}

First consider the case  where  $\lim_{n\to\infty}\frac{r_n}{|z_n|}=L\in (0, \infty)$.
Consider the sequence of rescaled surfaces
$\S_n'$ given by
$\frac{1}{ |z_n|}\wt \S_n$ and let $q_n'=\frac{1}{ |z_n| }q_n$,
$Q_n'=\frac{1}{ |z_n| }Q_n$, $z_n'=\frac{1}{ |z_n| }z_n$, $Z_n'=\frac{1}{ |z_n| }{Z_n}$.

By equation~\eqref{EQ13}, equation~\eqref{EQ14} and Claim~\ref{distance}, we then have
\[
d_{\S_n'}(q_n',\vec 0)\leq (1+\beta)|Q_n'|\leq (1+\beta)(1+\mu)\frac{r_n}{|z_n|},
\]
\[
\sup_{B_{\S'_n}( q'_n,(1-\frac{\sqrt2}2)\frac{r_n}{|z_n|})}|A_{\S'_n}|\geq \frac{|z_n|}{r_n},
\]
\[
 (1+\de)^\frac12{\sqrt{2}}\,|z'_n-q'_n|\leq \sqrt 2 |Q'_n-Z'_n|-2\rho \frac{r_n}{|z_n|}.
\]
Notice that after replacing by a subsequence,  the sequence of  points $z'_n$ converges to a point on the unit sphere.

 By Theorem~\ref{thm2.1},
see also the arguments in Claim~\ref{twocomponents}, and the fact that $\lim_{n\to\infty}\frac{|z_n|}{r_n}=\frac1L>0$, a subsequence of $\S_n'$   converges either to a
vertical helicoid containing the $x_3$-axis or to a foliation of $\rth$ minus the
$x_3$-axis by horizontal planes. (Note that when $\lim_{n\to\infty}\frac{r_n}{|z_n|}=0$, in which case $\lim_{n\to\infty}\frac{|z_n|}{r_n}=\infty$,
the surfaces $\S_n'$   converge  to a foliation of $\rth$ minus the
$x_3$-axis by horizontal planes.)

In either case, $\lim_{n\to\infty} z_n'$ exists and it is equal to $\lim_{n\to\infty} Z_n'$. Similarly, $\lim_{n\to\infty} q_n'$ exists and it is equal to $\lim_{n\to\infty} Q_n'$. In particular, $\lim_{n\to\infty} |z'_n-q'_n|$ exists and it is equal to $\lim_{n\to\infty} |Z'_n-Q'_n|$. Thus, since
\[
\lim_{n\to\infty}2\rho \frac{r_n}{|z_n|}=2\rho L>0,
\]
by choosing $n$ sufficiently large, one   contradicts the inequality
\[
 (1+\de)^\frac12{\sqrt{2}}\,|z'_n-q'_n|\leq \sqrt 2 |Q'_n-Z'_n|-2\rho \frac{r_n}{|z_n|}.
\]

Suppose that $\lim_{n\to\infty}\frac{r_n}{|z_n|}=0$. This being the case, in order to obtain a contradiction, we argue exactly as in the previous case and note that $\lim_{n\to\infty} |z'_n-q'_n|=1\neq 0$ since $\lim_{n\to\infty} q'_n=\vec 0$ and $z_n'$ converges to a point on the unit sphere. Thus, by choosing $n$ sufficiently large, one again contradicts the inequality
\[
 (1+\de)^\frac12{\sqrt{2}}\,|z'_n-q'_n|\leq \sqrt 2 |Q'_n-Z'_n|-2\rho \frac{r_n}{|z_n|}.
\]

It remains to obtain a contradiction when  $\lim_{n\to\infty}\frac{r_n}{|z_n|}= \infty$. In this case, consider the sequence of rescaled surfaces
$\frac{1}{ r_n}\wt \S_n$. Abusing the notation, let $\S_n'$ denote this sequence and let $q_n'=\frac{1}{ r_n }q_n$, $Q_n'=\frac{1}{ r_n }Q_n$, $z_n'=\frac{1}{ r_n }z_n$, $Z_n'=\frac{1}{ r_n }{Z_n}$.   By equation~\eqref{EQ13}, equation~\eqref{EQ14} and Claim~\ref{distance}, we then have
\[
d_{\S_n'}(q_n',\vec 0)\leq (1+\beta)|Q_n'|\leq (1+\beta)(1+\mu),
\]
\[
\sup_{B_{\S'_n}( q'_n,1-\frac{\sqrt2}2)}|A_{\S'_n}|\geq 1,
\]
\[
 (1+\de)^\frac12{\sqrt{2}}\,|z'_n-q'_n|\leq \sqrt 2 |Q'_n-Z'_n|-2\rho.
\]

Arguing like in the previous case, by Theorem~\ref{thm2.1} a subsequence of the
surfaces $\S_n'$  converges either to a
vertical helicoid containing the $x_3$-axis or to a foliation of $\rth$ minus the
$x_3$-axis by horizontal planes. In either case, since
$\lim_{n\to\infty}\frac{r_n}{|z_n|}= \infty$, then $|z_n'|=\frac{|z_n|}{r_n}$
is bounded from above, and it follows that $\lim_{n\to\infty} z_n'$ exists and it is
equal to $\lim_{n\to\infty} Z_n'$. Similarly, $\lim_{n\to\infty} q_n'$ exists and it
is equal to $\lim_{n\to\infty} Q_n'$. In particular $\lim_{n\to\infty} |z'_n-q'_n|$
exists and it is equal to $\lim_{n\to\infty} |Z_n'-Q_n'|$. Therefore, similarly to the previous two cases,
by choosing $n$ sufficiently large, one contradicts the inequality
\[
 (1+\de)^\frac12{\sqrt{2}}\,|z'_n-q'_n|\leq \sqrt 2 |Q'_n-Z'_n|-2\rho.
\]

This finishes the proof of the theorem.\end{proof}

\subsection{Curvature Estimates.} \label{sec:appl}

In this section we   prove three other useful
corollaries of Theorem~\ref{th} and Corollary~\ref{cest2}. The
next corollary essentially states that if three $H$-disks are
sufficiently close to a point that is away from their boundaries,
then nearby that point, the surfaces
satisfy a curvature estimate.

\begin{corollary} \label{cest} There exist constants $\ve<1$, $C>1$
such that the following holds.  Let
$\Sigma_1$, $\Sigma_2$, $\Sigma_3$ be three pairwise disjoint $H_i$-disks with
$\partial \Sigma_i\subset [ \rth- \B(1)]$ \,for $i=1,2,3$.
If $\,\B(\ve)\cap\Sigma_i\not=\O$ for $i=1,2,3$, then {\large
\[
\sup_{\B(\ve)\cap\Sigma_i,\,i=1,2,3}|A_{\Sigma_i}|\leq C.
\]}\end{corollary}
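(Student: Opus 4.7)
The plan is to argue by contradiction. If the statement failed, applying its negation with the parameter pair $(\ve, C) = (1/n, n^2)$ yields, for each $n \in \N$, three pairwise disjoint $H_i^n$-disks $\S_1^n,\S_2^n,\S_3^n$ satisfying the hypotheses of the corollary, together with a point $p_n$ in one of them---say $\S_1^n$ after relabeling and passing to a subsequence---with $|p_n| < 1/n$ and $|A_{\S_1^n}|(p_n) > n^2$. Rescaling by $n$ and translating by a base point $q_n \in \S_1^n \cap \B(1/n)$ produces pairwise disjoint $\widetilde{H}_i^n$-disks $\widetilde{\S}_i^n := n(\S_i^n - q_n)$, each meeting $\B(2)$, with boundaries in $\rth - \B(n/2)$ and $\widetilde{H}_i^n = H_i^n/n \to 0$; moreover $\widetilde{\S}_1^n$ contains a point $\widetilde p_n \in \B(2)$ with $|A|(\widetilde p_n) > n$. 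Translating by $\widetilde p_n$ places this high-curvature point at the origin, and Theorem~\ref{thm2.1} applies to $\widetilde{\S}_1^n - \widetilde p_n$; since $|A|(\vec 0) \to \infty$ rules out Case~A (smooth convergence to a helicoid has locally bounded curvature), Case~B must hold. Hence, after a rotation, $\widetilde{\S}_1^n - \widetilde p_n$ converges $C^\alpha$ to a foliation of $\rth - \cS$ by horizontal planes, with $\cS$ the vertical line through the origin.

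The second step is to bound $|A_{\widetilde{\S}_i^n}|$ for $i = 2, 3$ uniformly on compact subsets of $\rth - \cS$. By Case~B of Theorem~\ref{thm2.1} together with Theorem~\ref{th} and Corollary~\ref{cest2}, $|A_{\widetilde{\S}_1^n}|$ is locally bounded outside any fixed solid cone around $\cS$. For any base point $y \in \rth - \cS$ and any sufficiently small $r > 0$ with $\B(y, r)$ disjoint from that cone, the component $\Delta_n$ of $\widetilde{\S}_1^n \cap \B(y, r)$ closest to $y$ is, for $n$ large, a compact piecewise-smooth surface with boundary on $\partial \B(y, r)$ satisfying $|A_{\Delta_n}| \le K(r)$. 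Applying Corollary~\ref{cest2} with $\Delta = \Delta_n$ and $\cD = \widetilde{\S}_i^n$---disjoint from $\widetilde{\S}_1^n$ and with boundary outside $\B(y, r)$---yields uniform bounds on $|A_{\widetilde{\S}_i^n}|$ in a smaller ball centered at $y$. A diagonal extraction over a countable dense net of such base points then produces a subsequence along which $\widetilde{\S}_2^n, \widetilde{\S}_3^n$ converge smoothly away from $\cS$ to minimal laminations $\cL_2, \cL_3$ of $\rth - \cS$, each leaf of which is disjoint from every leaf of the horizontal foliation produced by $\widetilde{\S}_1^n$ and therefore, because that foliation exhausts $\rth - \cS$, must coincide with a horizontal plane. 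Since each $\widetilde{\S}_i^n$ meets the bounded ball $\B(3)$, each $\cL_i$ contains at least one horizontal plane at bounded height.

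The main obstacle is closing the argument with a contradiction from this limit picture. The configuration produced has three sequences of pairwise disjoint CMC disks, each accumulating on nonempty unions of horizontal planes from the same foliation, with each disk's own ``singular set'' passing through or very close to $\cS$. Re-applying Theorem~\ref{thm2.1}~(B)(b) separately to $\widetilde{\S}_2^n$ and $\widetilde{\S}_3^n$ in a neighborhood of $\cS$---where the axis set $\cC_n$ of each disk consists of at most two disk components intersecting the shrinking ball $\B(1/n)$---together with the local multi-valued graph structure for $\widetilde{\S}_1^n$ near $\cS$ given by Theorem~\ref{mainextension}, shows that three pairwise disjoint disk sequences with boundaries escaping to infinity cannot simultaneously thread through the shrinking gaps between consecutive sheets of the multi-valued graphs while each passing through $\B(3)$. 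This yields the contradiction and establishes Corollary~\ref{cest}.
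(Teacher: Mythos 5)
Your first two steps track the paper's argument: rescale by $n$ so that the bad curvature point forces, via Theorem~\ref{thm2.1}, $C^\alpha$ convergence of the first sequence of disks to a foliation by horizontal planes away from a vertical singular line, and then use Corollary~\ref{cest2} together with disjointness to force the other two sequences to limit onto horizontal planes as well. That part is in order (modulo the routine verification that Theorem~\ref{thm2.1} applies after your translations, and that for $i=2,3$ the hypotheses of that theorem --- a point of the surface near the origin with $|A|$ bounded below or blowing up --- actually hold before you invoke its Case~B structure for those surfaces).

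The genuine gap is in your closing step. The entire content of the corollary is the passage from two to three disks (the paper's subsequent remark notes the estimate is \emph{false} for two disjoint $H$-disks, which can interleave as a double spiral staircase), so the assertion that three disjoint disk sequences ``cannot simultaneously thread through the shrinking gaps between consecutive sheets of the multi-valued graphs'' is precisely what must be proved, and you give no mechanism for it; nothing in Theorem~\ref{mainextension} or in item~(B)(b) of Theorem~\ref{thm2.1}, applied to each disk separately, rules out three interleaved spiral structures sharing one axis. The paper closes the argument quantitatively: it considers the set $\Gamma_n$ of points of the \emph{union} $\cup_{i=1}^3 M_i(n)$ in $\B(3)$ where the tangent plane is vertical, observes that each of the three disks contributes at least one analytic arc component of $\Gamma_n$ (since each limits onto the horizontal foliation with curvature blow-up along the axis), and then adapts the proof of Claim~\ref{twocomponents} to the union --- the three-case rescaling analysis there (multiplicity $>2$ forcing stability of the helicoid; a vertical tangent plane at distance one from the axis of a limit helicoid; the $3$-valued graph of Theorem~\ref{mainextension} contradicting a limit foliation by vertical planes) uses only embeddedness and disjointness, so it applies to $\cup_i M_i(n)$ and shows $\Gamma_n$ has at most two components. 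That count, three versus at most two, is the contradiction, and it is exactly the argument missing from your proposal; without it (or a substitute of comparable content) the proof does not close.
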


\begin{proof}
Arguing by contradiction, suppose that the corollary fails.
In this case, for $i=1,2,3$, there exist
sequences of  $H_i(n)$-disks $\Sigma_i(n)$ with
$\partial \Sigma_i(n)\subset \partial \B(1)$ such
that $\B(\frac{1}{n^2} )\cap\Sigma_i\not=\O$ but {\large
\[
\sup_{\B(\frac{1}{n^2} )\cap[\cup_{i=1}^3\Sigma_i(n)]}|A_{\cup_{i=1}^3\Sigma_i(n)}|> n^2.
\]}

Consider the sequence of rescaled surfaces $M_i(n)=n\Sigma_i(n)$,
$i=1,2,3$, and note that $M_i(n)\cap \B(1/n)\neq\O$ for each $i$.
Without loss of generality, we can assume that {
\[
\sup_{\B(1/n)\cap M_1(n)}|A_{M_1(n)}|> n.
\]}
After replacing by a subsequence, Theorem~\ref{thm2.1} implies that
the sequence $M_1(n)$ converges
$C^\alpha$, $\a\in (0,1)$,  to a foliation $\cF$ of $\rth-\cS_1$ by
parallel  planes, where $\cS_1$
is a  line orthogonal to
the planes; without loss of generality we will henceforth
assume that the parallel planes are horizontal.

Since for $i=2,3$, $M_i(n)\cap\B(1/n)\neq\O$,
Theorem~\ref{thm2.1},  Corollary~\ref{cest2}
and the embeddedness of the disconnected surface $\cup_{i=1}^3M_i(n)$
imply that, after replacing by  new
subsequences, each of the sequences $M_i(n)$  converges
$C^\alpha$, for $\a\in (0,1)$, to the foliation $\cF$
of $\rth-\cS_1$ by horizontal planes.
It follows that for $n $ large, each
of the surfaces $M_i$, $i=1,2,3$,  yields at least one related analytic
arc component in the set $\G_n$ where
$\cup_{i=1}^3M_i(n)$ has vertical tangent planes in $\B(3)$.
But the total number of such components in $\G_n$
must be at most two by adapting the arguments
in the proof of Claim~\ref{twocomponents}.
This gives a contradiction which completes the proof of the claim.
\end{proof}

From Corollary~\ref{cest} one easily obtains the result below.

\begin{corollary} \label{cor:3} There exists $\ve\in (0,1)$ such that the
following holds.  Let $\Sigma_1$, $\Sigma_2$, $\Sigma_3$ be
$H_i$-disks with $\partial \Sigma_i\subset \partial \B(1)$ for
$i=1,2,3$. If $\B(\ve)\cap\Sigma_i\not=\O$ for
$i=1,2,3$, then, after a rotation, each component of
$\Sigma_i\cap \B(2\ve)$ intersecting $\B(\ve)$ is a graph
with norm of its gradient bounded by 1 over its projection into the plane $\{x_3=0\}.$
\end{corollary}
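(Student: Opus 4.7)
The plan is to deduce this corollary from Corollary~\ref{cest} in two steps: first use the uniform curvature bound that corollary provides to give a graphical representation of each $\Sigma_i$ over its own tangent plane at a chosen base point, and second argue that the three tangent planes must all lie close to a common direction, so that a single rotation places all three disks as graphs with gradient at most one over $\{x_3=0\}$. Concretely, I would apply Corollary~\ref{cest} to obtain constants $\ve_0\in(0,1)$ and $C_0>1$ with $|A_{\Sigma_i}|\leq C_0$ on $\B(\ve_0)\cap\Sigma_i$ for $i=1,2,3$. Then, for any $\ve\leq \min\{\ve_0/4,\,1/(100\,C_0)\}$ and any base point $p_i\in\B(\ve)\cap\Sigma_i$, a standard graph representation argument based on $|A|\leq C_0$ combined with the small scale shows that the component of $\Sigma_i\cap \B(2\ve)$ containing $p_i$ is a smooth graph over the disk of radius $2\ve$ centered at $p_i$ in $T_{p_i}\Sigma_i$, with gradient bounded by, say, $1/4$.

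The crux is to show that, after possibly shrinking $\ve$ further, the three tangent planes $T_{p_i}\Sigma_i$ at the chosen base points are within a small angle, say $\pi/10$, of one another; I expect this alignment step to be the main obstacle. I would carry it out by the same contradiction-and-blow-up scheme used in the proof of Corollary~\ref{cest}. Assume the contrary: then there exist sequences $\ve_n\searrow 0$ and triples of pairwise disjoint $H_i(n)$-disks $\Sigma_i(n)$ satisfying the hypotheses at scale $\ve_n$ but with tangent planes at some $p_i(n)\in \B(\ve_n)\cap\Sigma_i(n)$ persistently separated by a fixed positive angle. Rescaling by $1/\ve_n$, the surfaces $M_i(n)=\frac{1}{\ve_n}\Sigma_i(n)$ meet $\B(1)$, have boundary outside $\B(1/\ve_n)$, and by Corollary~\ref{cest} applied to the original $\Sigma_i(n)$ satisfy $|A_{M_i(n)}|\leq \ve_n\,C_0\to 0$ on $\B(\ve_0/\ve_n)\cap M_i(n)$. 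Each $M_i(n)$ therefore converges subsequentially in $C^2$ on compact sets to a plane $\Pi_i$ through the origin. The last paragraph of the proof of Corollary~\ref{cest}, which combines pairwise disjointness with Theorem~\ref{th}, Corollary~\ref{cest2}, and the bound of at most two vertical-tangent arc components from Claim~\ref{twocomponents}, then forces $\Pi_1=\Pi_2=\Pi_3$, contradicting the angular separation assumption.

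Once the three tangent planes are confined to a small cone about a common direction, I rotate coordinates so this common direction is $(0,0,1)$. Then each graph from the first step, originally written over $T_{p_i}\Sigma_i$ with gradient at most $1/4$, reprojects to a graph over $\{x_3=0\}$ with gradient bounded by $1$, since the tilt between $T_{p_i}\Sigma_i$ and $\{x_3=0\}$ is small. This yields the desired simultaneous graphical description of every component of $\Sigma_i\cap \B(2\ve)$ meeting $\B(\ve)$, completing the proof. The only genuinely non-trivial part of this scheme is the alignment of the three limit planes, which is the same mechanism that underlies Corollary~\ref{cest} itself.
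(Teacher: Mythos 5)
Your overall strategy is the right one and matches the paper's intent: the paper gives no separate argument for this corollary beyond the remark that it follows from Corollary~\ref{cest}, and your plan (uniform curvature bound from Corollary~\ref{cest} $\Rightarrow$ each component through $\B(\ve)$ is a small-gradient graph over its tangent plane at a base point, then align the tangent planes and rotate) is the natural way to make that precise. Note, as you implicitly do, that the disks must be taken pairwise disjoint as in Corollary~\ref{cest}.

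Two steps need repair, though neither changes the architecture. First, your justification of $\Pi_1=\Pi_2=\Pi_3$ is misapplied: after rescaling by $1/\ve_n$ the curvature bound becomes $|A_{M_i(n)}|\le \ve_n C_0\to 0$ on balls of radius $\ve_0/\ve_n\to\infty$, so the limit is a collection of planes and the situation of the last paragraph of the proof of Corollary~\ref{cest} (convergence to a foliation with a singular line of curvature blow-up, Theorem~\ref{thm2.1} Case B, the vertical-tangent arcs $\G_n$ and the count of at most two components from Claim~\ref{twocomponents}) simply cannot arise; that machinery is not available here. The correct, and easier, reason is pairwise disjointness together with embeddedness: each rescaled component through a point of $\oB(1)$ is, for large $n$, a small-gradient graph over an arbitrarily large disk in its limit plane, and graphs $C^2$-close to two \emph{transverse} planes on a sufficiently large ball must intersect; hence the limit planes are mutually parallel, contradicting the assumed fixed angular separation (the tangent planes at the base points converge to the limit plane directions). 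Second, the statement concerns \emph{every} component of $\Sigma_i\cap\B(2\ve)$ meeting $\B(\ve)$, whereas your alignment is carried out only at one chosen base point per disk; a priori two distinct components of the same $\Sigma_i\cap\B(2\ve)$ could have tangent planes at a definite angle without intersecting inside $\B(2\ve)$ (two planes each passing within $\ve$ of the origin and making angle $\theta$ need only meet within distance roughly $\ve/\sin(\theta/2)$ of the origin, which can exceed $2\ve$). So you must run the same contradiction argument for arbitrary pairs of base points in any two such components, including two components of the same disk: these are disjoint subsets of $\rth$ (respectively, distinct sheets of one embedded disk, which cannot cross), so the identical blow-up/disjointness argument forces their tangent planes within any prescribed angle once $\ve$ is small, and then the final rotation and reprojection step goes through for all components simultaneously. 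With these two fixes your proof is correct.
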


\begin{remark}{\rm
In Corollary~0.4 of~\cite{cm23}, Colding and Minicozzi gave an
analogous result to the statement of  Corollary~\ref{cest}.
In their case, it suffices to have two embedded
minimal disks to obtain a curvature estimate.
This is because one can insert a stable minimal disk,
satisfying curvature estimates, in between
the two minimal disks and apply a
result analogous to Corollary~\ref{cest2} for minimal disks. In our case this approach does
not work and indeed the curvature estimate is not true for just two $H$-disks.}
\end{remark}

In the next proposition, we use the approach described in the previous
remark to obtain a curvature estimate when certain topological conditions are satisfied.

\begin{corollary}  There exist constants $\ve<\frac12$, $C>1$
such that the following holds.  Let
$\Sigma_1$, $\Sigma_2$ be two pairwise disjoint $H_i$-disks with
$\partial \Sigma_i\subset [ \rth- \B(1)]$ \,  for $i=1,2$, and
$0\leq H_1\leq H_2$. Let $W$ denote a component of $\B(1)-[\Sigma_1\cup\Sigma_2]$ such
that the mean curvature vector of $\S_2$ points into $W$.
If $W\cap\B(\ve)$  contains a component $W'$  such that
$\Sigma_i\cap\partial W'\neq \O$ for $i=1,2$  , then {\large
\[
\sup_{\B(\ve)\cap\Sigma_i,\,i=1,2}|A_{\Sigma_i}|\leq C.
\]}\end{corollary}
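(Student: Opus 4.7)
\medskip

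\noindent\textbf{Proof proposal.} The plan is to follow the recipe sketched in the preceding remark: insert an auxiliary stable $H$-surface $\Sigma_3$ between $\Sigma_1$ and $\Sigma_2$, and then reduce the desired curvature bound on $\Sigma_1,\Sigma_2$ to the one-sided curvature estimate for $H$-disks in Corollary~\ref{cest2}. The argument will proceed by contradiction. Assuming the statement fails, I would extract, exactly as at the start of the proof of Corollary~\ref{cest}, a sequence of pairs of pairwise disjoint $H_i(n)$-disks $(\Sigma_1(n),\Sigma_2(n))$ with $0\leq H_1(n)\leq H_2(n)$ and $\partial\Sigma_i(n)\subset[\rth-\B(1)]$, together with components $W_n'\subset W_n\cap\B(1/n^2)$ verifying the hypotheses at scale $\ve=1/n^2$, yet with $\sup_{\B(1/n^2)}|A_{\Sigma_1(n)\cup\Sigma_2(n)}|>n^2$. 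A rescaling by the factor $n$ then produces a sequence in which the boundaries $\partial\Sigma_i(n)$ escape to infinity while the curvature still blows up inside $\B(1/n)$.

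The key ingredient, and the main obstacle, will be the construction for each $n$ of a \emph{stable} $H(n)$-surface $\Sigma_3(n)$ with $H(n)\in[H_1(n),H_2(n)]$, lying in $\overline{W}_n$, disjoint from $\Sigma_1(n)\cup\Sigma_2(n)$ in its interior, and meeting $W_n'\subset\B(1/n)$. The barrier geometry here is exactly what the hypotheses are designed to provide: the mean curvature vector of $\Sigma_2(n)$ points into $W_n$, so $\Sigma_2(n)$ is a one-sided barrier for any $H$-surface with $H\leq H_2(n)$ approaching it from within $W_n$, while $H_1(n)\leq H_2(n)$ furnishes a compatible barrier condition along $\Sigma_1(n)$. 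I would produce $\Sigma_3(n)$ by minimizing an $H(n)$-functional among integral currents in $\overline{W}_n$ with prescribed boundary in $\partial\B(1)$, for some suitably chosen value $H(n)\in[H_1(n),H_2(n)]$, and invoking the maximum principle for CMC surfaces to enforce strict separation from $\Sigma_1(n)\cup\Sigma_2(n)$ in the interior. A topological/linking argument using that both $\Sigma_i(n)$ meet $\partial W_n'$ inside $\B(1/n)$ should force $\Sigma_3(n)\cap W_n'\neq\O$. The delicate points are the orientation bookkeeping, the control of $\partial\Sigma_3(n)$, and the verification that the minimizer actually threads through $W_n'$ rather than retracting to $\partial W_n$.

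Granted the existence of $\Sigma_3(n)$, the remainder is a direct application of tools already in the paper. Stability yields a Schoen-type interior curvature estimate for CMC surfaces, giving a universal constant $a>0$ with $|A_{\Sigma_3(n)}|(x)\leq a/d_{\rth}(x,\partial\Sigma_3(n))$; hence $|A_{\Sigma_3(n)}|$ is bounded by a universal constant on $\B(1/2)\cap\Sigma_3(n)$ for all large $n$. I would then pick $p_n\in\Sigma_3(n)\cap W_n'\subset\B(1/n)$, translate all three surfaces by $-p_n$, and apply Corollary~\ref{cest2} with a small fixed $R>0$: taking $\Delta$ to be the translated $\Sigma_3(n)$ and $\cD$ to be each translated $\Sigma_i(n)$ in turn, the hypotheses of Corollary~\ref{cest2} are satisfied because $\Sigma_i(n)$ and $\Sigma_3(n)$ are disjoint inside $\B(R)$ and $\partial\Sigma_i(n)$ lies far from the origin. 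The corollary then yields a uniform bound $\sup_{\B(\ve R)}|A_{\Sigma_i(n)}|\leq C_a/R$ with constants depending only on $a$, contradicting the curvature blow-up inside $\B(1/n)$ and thereby completing the argument. Once the stable barrier $\Sigma_3(n)$ is in place, the closing step is essentially automatic; the Plateau-theoretic construction of $\Sigma_3(n)$ with the correct separation, barrier and intersection properties is the only substantive obstacle.
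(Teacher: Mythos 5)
Your overall strategy is the right one and is in fact the one the paper follows (it is the strategy announced in the remark preceding the statement): insert a stable CMC barrier inside $W$ reaching into $\B(\ve)$, get a uniform curvature bound for the barrier, and conclude with Corollary~\ref{cest2}. However, the two places where you defer the work are precisely where the content lies, and as written they contain genuine gaps. First, the barrier construction. You propose minimizing an $H(n)$-functional for an arbitrary $H(n)\in[H_1,H_2]$, but the hypotheses only control the orientation of the mean curvature vector of $\Sigma_2$ (it points into $W$); nothing is assumed about the side toward which $\Sigma_1$ curves, so for $H(n)>H_1$ the $\Sigma_1$-portion of $\partial W$ need not be a barrier and the minimizer can be lost against it. The paper takes $H=H_1$ exactly for this reason: it produces (citing \cite{alr1}) a compact, oriented, weakly stable $H_1$-surface $M_1\subset W$ with $\partial M_1=\partial\Sigma_1\cap\partial W$, homologous to $\Sigma_1\cap\partial W$ in $W$; the maximum principle works against $\Sigma_1$ because the mean curvatures agree, and against $\Sigma_2$ because $H_2\geq H_1$ with the stated orientation. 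Moreover the ``linking argument'' you leave delicate is resolved by the homology class itself: since $W'\subset\B(\ve)$ has boundary meeting both $\Sigma_1$ and $\Sigma_2$, any surface homologous in $W$ to $\Sigma_1\cap\partial W$ must intersect $\B(\ve)$.

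Second, your claim that ``stability yields a Schoen-type interior curvature estimate'' with a constant independent of $H$ is asserted without justification, and it is false for merely weakly stable $H$-surfaces (round spheres of radius $1/H$ are weakly stable with $|A|\sim H$ arbitrarily large and no boundary); a uniform-in-$H$ estimate for strongly stable $H$-surfaces is a nontrivial theorem not available in this paper. The paper's own route to the curvature bound for the barrier is the Lopez--Ros type result \cite{lor2} combined with a rescaling/blow-up argument, which requires the mean curvature to stay bounded; this is exactly why the proof first disposes of the case $H_2\geq 1$ by using $\Sigma_2$ itself as the bounded-curvature barrier via the extrinsic curvature estimate of Theorem~\ref{ext:cest}, and only builds the stable $H_1$-surface when $H_2<1$ (so $H_1\leq H_2<1$). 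Your proposal makes no such case distinction, so either you must supply a curvature estimate for stable CMC surfaces uniform in $H$ from outside the paper, or you must add the $H_2\geq 1$ case as the paper does. Finally, a cosmetic point: the contradiction/rescaling wrapper is unnecessary; once the barrier with a uniform curvature bound in $\B(1/2)$ is in hand, Corollary~\ref{cest2} applies directly at unit scale.
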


\begin{proof}
By Corollary~\ref{cest2}, there exists $\ve>0$ such that the following holds:
if $M\subset W$ is a surface such that
$M\cap\B(\ve)\neq\O$, $\partial M\subset \partial \B(1)$ and $M$ satisfies
a uniform estimate on the norm of its second fundamental form
in $\B(1\slash 2)$ that does not depend on $\Sigma_1$ or $\Sigma_2$, then the
corollary holds. Therefore, it suffice to construct such an $M$. If $H_2\geq 1$,
then by the extrinsic curvature estimates in Theorem~\ref{ext:cest}, $\Sigma_2$
can be chosen to be such $M$. Therefore, we can assume that $H_2<1$.

Under our hypothesis, there exists a compact, oriented, weakly
stable $H_1$-surface $M_1\subset W$ such that
$\partial M_1= \partial \Sigma_1\cap\partial W$ and
$M_1$ is homologous to $\Sigma_1\cap\partial W$ in $W$,
see for instance~\cite{alr1} for this construction.
By the results in~\cite{lor2} and standard rescaling
arguments, $M_1$ satisfies a uniform estimate on the
norm of the second fundamental form in $\B(1\slash 2)$.
Since  $M_1$ is homologous to $\Sigma_1\cap\partial W$
in $W$  and $W\cap\B(\ve)$  contains a component $W'$  such that
$\Sigma_i\cap\partial W'\neq \O$ for $i=1,2$,
then $M_1\cap\B(\ve)\neq\O$. This finishes the proof of the
corollary.
\end{proof}

\vspace{.3cm}
\center{William H. Meeks, III at profmeeks@gmail.com\\
Mathematics Department, University of Massachusetts, Amherst, MA 01003}
\center{Giuseppe Tinaglia at giuseppe.tinaglia@kcl.ac.uk\\ Department of
Mathematics, King's College London,
London, WC2R 2LS, U.K.}

\bibliographystyle{plain}
\bibliography{bill}

\begin{thebibliography}{10}

\bibitem{alr1}
H.~Alencar and H.~Rosenberg.
\newblock Some remarks on the existence of hypersurfaces of constant mean
  curvature with a given boundary, or asymptotic boundary in hyperbolic space.
\newblock {\em Bull. Sci. Math.}, 121(1):61--69, 1997.
\newblock MR1431100, Zbl 0882.53044.

\bibitem{bb1}
J.~Bernstein and C.~Breiner.
\newblock Helicoid-like minimal disks and uniqueness.
\newblock {\em J. Reine Angew. Math.}, 655:129--146, 2011.
\newblock MR2806108, Zbl 1225.53008.

\bibitem{bt1}
J.~Bernstein and G.~Tinaglia.
\newblock Topological type of limit laminations of embedded minimal disks.
\newblock To appear in J. Differential Geom. Preprint at
  http://arXiv.org/abs/1309.6260.

\bibitem{cmw1}
T.~Choi, W.~H. Meeks~III, and B.~White.
\newblock A rigidity theorem for properly embedded minimal surfaces in $\rth$.
\newblock {\em J. Differential Geom.}, 32:65--76, 1990.
\newblock MR1064865, Zbl 0704.53008.

\bibitem{cm21}
T.~H. Colding and W.~P. Minicozzi~II.
\newblock The space of embedded minimal surfaces of fixed genus in a
  $3$-manifold {I}; {E}stimates off the axis for disks.
\newblock {\em Ann. of Math.}, 160:27--68, 2004.
\newblock MR2119717, Zbl 1070.53031.

\bibitem{cm22}
T.~H. Colding and W.~P. Minicozzi~II.
\newblock The space of embedded minimal surfaces of fixed genus in a
  $3$-manifold {I}{I}; {M}ulti-valued graphs in disks.
\newblock {\em Ann. of Math.}, 160:69--92, 2004.
\newblock MR2119718, Zbl 1070.53032.

\bibitem{cm24}
T.~H. Colding and W.~P. Minicozzi~II.
\newblock The space of embedded minimal surfaces of fixed genus in a
  $3$-manifold {I}{I}{I}; {P}lanar domains.
\newblock {\em Ann. of Math.}, 160:523--572, 2004.
\newblock MR2123932, Zbl 1076.53068.

\bibitem{cm23}
T.~H. Colding and W.~P. Minicozzi~II.
\newblock The space of embedded minimal surfaces of fixed genus in a
  $3$-manifold {I}{V}; {L}ocally simply-connected.
\newblock {\em Ann. of Math.}, 160:573--615, 2004.
\newblock MR2123933, Zbl 1076.53069.

\bibitem{cm35}
T.~H. Colding and W.~P. Minicozzi~II.
\newblock The {C}alabi-{Y}au conjectures for embedded surfaces.
\newblock {\em Ann. of Math.}, 167:211--243, 2008.
\newblock MR2373154, Zbl 1142.53012.

\bibitem{hm10}
D.~Hoffman and W.~H. Meeks~III.
\newblock The strong halfspace theorem for minimal surfaces.
\newblock {\em Invent. Math.}, 101:373--377, 1990.
\newblock MR1062966, Zbl 722.53054.

\bibitem{kks1}
N.~Korevaar, R.~Kusner, and B.~Solomon.
\newblock The structure of complete embedded surfaces with constant mean
  curvature.
\newblock {\em J. Differential Geom.}, 30:465--503, 1989.
\newblock MR1010168, Zbl 0726.53007.

\bibitem{ku2}
R.~Kusner.
\newblock {\em Global geometry of extremal surfaces in three-space}.
\newblock PhD thesis, University of California, Berkeley, 1988.

\bibitem{lo1}
S.~Lojasiewicz.
\newblock Triangulation of semianalytic sets.
\newblock {\em Ann. Scuola Norm. Sup. Pisa}, 18:449--474, 1964.
\newblock MR0173265, Zbl 0128.17101.

\bibitem{lor2}
F.~J. L\'opez and A.~Ros.
\newblock Complete minimal surfaces of index one and stable constant mean
  curvature surfaces.
\newblock {\em Comment. Math. Helv.}, 64:34--53, 1989.
\newblock MR0982560, Zbl 0679.53047.

\bibitem{me25}
W.~H. Meeks~III.
\newblock The regularity of the singular set in the {C}olding and {M}inicozzi
  lamination theorem.
\newblock {\em Duke Math. J.}, 123(2):329--334, 2004.
\newblock MR2066941, Zbl 1086.53005.

\bibitem{me30}
W.~H. Meeks~III.
\newblock The limit lamination metric for the {C}olding-{M}inicozzi minimal
  lamination.
\newblock {\em Illinois J. of Math.}, 49(2):645--658, 2005.
\newblock MR2164355, Zbl 1087.53058.

\bibitem{mpr11}
W.~H. Meeks~III, J.~P\'{e}rez, and A.~Ros.
\newblock Structure theorems for singular minimal laminations.
\newblock Preprint available at
  http://wdb.ugr.es/local/jperez/publications-by-joaquin-perez/.

\bibitem{mpr3}
W.~H. Meeks~III, J.~P\'{e}rez, and A.~Ros.
\newblock The geometry of minimal surfaces of finite genus {I}; curvature
  estimates and quasiperiodicity.
\newblock {\em J. Differential Geom.}, 66:1--45, 2004.
\newblock MR2128712, Zbl 1068.53012.

\bibitem{mpr19}
W.~H. Meeks~III, J.~P\'{e}rez, and A.~Ros.
\newblock Stable constant mean curvature surfaces.
\newblock In {\em Handbook of Geometrical Analysis}, volume~1, pages 301--380.
  International Press, edited by Lizhen Ji, Peter Li, Richard Schoen and Leon
  Simon, ISBN: 978-1-57146-130-8, 2008.
\newblock MR2483369, Zbl 1154.53009.

\bibitem{mpr18}
W.~H. Meeks~III, J.~P\'{e}rez, and A.~Ros.
\newblock Limit leaves of an {H} lamination are stable.
\newblock {\em J. Differential Geom.}, 84(1):179--189, 2010.
\newblock MR2629513, Zbl 1197.53037.

\bibitem{mr8}
W.~H. Meeks~III and H.~Rosenberg.
\newblock The uniqueness of the helicoid.
\newblock {\em Ann. of Math.}, 161:723--754, 2005.
\newblock MR2153399, Zbl 1102.53005.

\bibitem{mr13}
W.~H. Meeks~III and H.~Rosenberg.
\newblock The minimal lamination closure theorem.
\newblock {\em Duke Math. Journal}, 133(3):467--497, 2006.
\newblock MR2228460, Zbl 1098.53007.

\bibitem{mt8}
W.~H. Meeks~III and G.~Tinaglia.
\newblock Chord arc properties for constant mean curvature disks.
\newblock Preprint at http://arxiv.org/abs/1408.5578.

\bibitem{mt7}
W.~H. Meeks~III and G.~Tinaglia.
\newblock Curvature estimates for constant mean curvature surfaces.
\newblock Preprint at http://arxiv.org/abs/1502.06110.

\bibitem{mt9}
W.~H. Meeks~III and G.~Tinaglia.
\newblock One-sided curvature estimates for ${H}$-disks.
\newblock Preprint at http://arxiv.org/abs/1408.5233.

\bibitem{my1}
W.~H. Meeks~III and S.~T. Yau.
\newblock The classical {P}lateau problem and the topology of three-dimensional
  manifolds.
\newblock {\em Topology}, 21(4):409--442, 1982.
\newblock MR0670745, Zbl 0489.57002.

\bibitem{smyt1}
B.~Smyth and G.~Tinaglia.
\newblock The number of constant mean curvature isometric immersions of a
  surface.
\newblock {\em Comment. Math. Helv.}, 88(1):163--183, 2013.
\newblock MR3008916, Zbl 1260.53023.

\bibitem{wh15}
B.~White.
\newblock Curvatures of embedded minimal disks blow up on subsets of ${C}^{1}$
  curves.
\newblock {\em J. Differential Geom.}, 100(2):389--394, 2015.

\end{thebibliography}

\end{document}